\definecolor{Red}{rgb}{1,0,0}
\definecolor{Blue}{rgb}{0,0,1}
\definecolor{Olive}{rgb}{0.41,0.55,0.13}
\definecolor{Yarok}{rgb}{0,0.5,0}
\definecolor{Green}{rgb}{0,1,0}
\definecolor{MGreen}{rgb}{0,0.8,0}
\definecolor{DGreen}{rgb}{0,0.55,0}
\definecolor{Yellow}{rgb}{1,1,0}
\definecolor{Cyan}{rgb}{0,1,1}
\definecolor{Magenta}{rgb}{1,0,1}
\definecolor{Orange}{rgb}{1,.5,0}
\definecolor{Violet}{rgb}{.5,0,.5}
\definecolor{Purple}{rgb}{.75,0,.25}
\definecolor{Brown}{rgb}{.75,.5,.25}
\definecolor{Grey}{rgb}{.5,.5,.5}
\newcommand{\ind}{\mathbbm{1}}
\newcommand{\R}{\mathbb{R}}
\newcommand{\N}{\mathbb{N}}
\newcommand{\ip}[2]{\langle{#1},{#2}\rangle} 
\renewcommand{\ip}[2]{\left\langle#1,#2\right\rangle}
\renewcommand{\R}{\mathbb{R}}
\newcommand{\bT}{\boldsymbol{\mathcal{T}}}
\newcommand{\bX}{\boldsymbol{X}}
\newcommand{\M}{\mathcal{M}}
\newcommand{\cN}{{\bf \mathcal{N}}}
\newcommand{\ignore}[1]{\relax}
\newtheorem{theorem}{Theorem}[section]
\newtheorem{remark}[theorem]{Remark}
\newtheorem{lemma}[theorem]{Lemma}
\newtheorem{proposition}[theorem]{Proposition}
\newtheorem{definition}[theorem]{Definition}
\renewcommand{\ip}[2]{\left\langle#1,#2\right\rangle}
\newcounter{parentnumber}
\def\BState{\State\hskip-\ALG@thistlm}
\definecolor{Red}{rgb}{1,0,0}
\definecolor{Blue}{rgb}{0,0,1}
\definecolor{Olive}{rgb}{0.41,0.55,0.13}
\definecolor{Green}{rgb}{0,1,0}
\definecolor{MGreen}{rgb}{0,0.8,0}
\definecolor{DGreen}{rgb}{0,0.55,0}
\definecolor{Yellow}{rgb}{1,1,0}
\definecolor{Cyan}{rgb}{0,1,1}
\definecolor{Magenta}{rgb}{1,0,1}
\definecolor{Orange}{rgb}{1,.5,0}
\definecolor{Violet}{rgb}{.5,0,.5}
\definecolor{Purple}{rgb}{.75,0,.25}
\definecolor{Brown}{rgb}{.75,.5,.25}
\definecolor{Grey}{rgb}{.5,.5,.5}
\definecolor{Pink}{rgb}{1,0,1}
\definecolor{DBrown}{rgb}{.5,.34,.16}
\definecolor{Black}{rgb}{0,0,0}
\title{{\Large Information-Theoretic Guarantees for Recovering Low-Rank Tensors from Symmetric Rank-One Measurements}}
\author{{\sf Eren C. K{\i}z{\i}lda\u{g}}\thanks{Department of Statistics, University of Illinois Urbana-Champaign; e-mail: {\tt kizildag@illinois.edu}}}
\begin{document}
\maketitle
\begin{abstract}
In this paper, we investigate the sample complexity of recovering tensors with low symmetric rank from symmetric rank-one measurements. This setting is particularly motivated by the study of higher-order interactions and  the analysis of two-layer neural networks with polynomial activations (polynomial networks). Using a covering numbers argument, we analyze the performance of the symmetric rank minimization program and establish near-optimal sample complexity bounds when the underlying distribution is log-concave. Our measurement model involves random symmetric rank-one tensors, which lead to involved probability calculations. To address these challenges, we employ the Carbery-Wright inequality, a powerful tool for studying anti-concentration properties of random polynomials, and leverage orthogonal polynomials. Additionally, we provide a sample complexity lower bound based on Fano's inequality, and discuss broader implications of our results for two-layer polynomial networks.

\emph{Keywords:} Symmetric tensors, tensor recovery, rank minimization, covering numbers, low-rank, log-concave distributions.
\end{abstract}
\newpage
\tableofcontents
\newpage
\section{Introduction}
Tensors play a significant role in modern data science, as tensor-valued datasets frequently emerge in various applications such as neuroscience~\cite{beckmann2005tensorial}, imaging~\cite{zhou2013tensor,zhang2019tensor}, and signal processing~\cite{mesgarani2006content,nion2010tensor}. These applications often involve complex, multi-way interactions for which tensors serve as a natural representation. For a comprehensive overview, see~\cite{kolda2009tensor,bi2021tensors,auddy2024tensor} and references therein.

In this paper, we study the problem of recovering an unknown, order-$\ell$ tensor $\bT^*\in\R^{d\times \cdots \times d}$  from random measurements of the form:
\begin{equation}\label{eq:Measurement}
Y_i = \langle \bT^*, \mathcal{X}_i\rangle,\quad i=1,\dots,N.
\end{equation}
Here, an order-$\ell$ tensor $\bT^*\in\R^{d\times \cdots \times d}$ is a multidimensional array of numbers $\bT^*_{i_1,\dots,i_\ell}$, $i_1,\dots,i_\ell\in[d]:=\{1,\dots,d\}$, $N$ is the sample size,  $\mathcal{X}_i\in\R^{d\times \cdots \times d}$ are random, order-$\ell$ measurement tensors, and $\langle \cdot,\cdot \rangle$ is the Euclidean inner product in the ambient space $\R^{d^\ell}$.   Throughout, we assume that the order $\ell\in\mathbb{N}$ is fixed, while $d\to\infty$, reflecting the high-dimensional setting common in modern statistical applications. 

The measurement model~\eqref{eq:Measurement} is quite general and it encompasses various scenarios, such as  tensor completion, tensor regression, tensor estimation, and tensor PCA with appropriately chosen $\mathcal{X}_i$. See \cite{luo2023low} for a more detailed discussion. 

\paragraph{Symmetric Rank-One Measurements} Our particular focus is on symmetric, rank-one measurements, where $\mathcal{X}_i= \bX_i^{\otimes \ell}$ for independent and identically distributed (i.i.d.) random vectors $\bX_i\in\R^d$ with a distribution specified below. Here, $\otimes$ denotes the Segre outer product: for $\bX=(\bX(1),\dots,\bX(d))\in\R^d$ and any indices $i_1,\dots,i_\ell\in[d]$,
\[
(\bX^{\otimes \ell})_{i_1,\dots,i_\ell} := \bX(i_1)\cdots \bX(i_\ell).
\]
\paragraph{Distributional Assumption} We establish our results under the assumption that the entries of $\bX$ are i.i.d.\,samples of a log-concave distribution $\mathcal{D}$ on $\R$, i.e., $\bX\sim \mathcal{D}^{\otimes d}$.\footnote{Here, $\mathcal{D}^{\otimes d}$ represents the $d$-fold product measure $\mathcal{D}\otimes \cdots \otimes \mathcal{D}$.} A distribution $\mathcal{D}$ on $\R$ with density $f$ is log-concave if $-\log f$ is a convex function~\cite{klartag2005geometry,bagnoli2006log}. This is a very broad class that include a wide array of popular distributions such as  normal, uniform, exponential, Laplace, and Gamma, among many~\cite{bagnoli2006log}. Log-concave distributions are extensively studied in statistics and machine learning~\cite{samworth,walther2009inference,diakonikolas2017learning}, as well as in theoretical computer science~\cite{lovasz2007geometry}, combinatorics~\cite{stanley1989log}, and beyond.

\paragraph{Low Symmetric Rank Tensors} We assume that the unknown tensor $\bT^*$ has low symmetric rank: $\mathrm{rank}_S(\bT^*)\le r$ for some $r\in\mathbb{N}$, where the symmetric rank is defined as
\begin{equation}\label{eq:SymRank}
\mathrm{rank}_S(\bT^*):= \min\left\{r\ge 1:\bT^*=\textstyle\sum_{i\le r}\lambda_i \boldsymbol{v}_i^{\otimes \ell}, \lambda_1,\dots,\lambda_r\in\R,\boldsymbol{v}_1,\dots,\boldsymbol{v}_r\in\R^d\right\}. 
\end{equation}
Namely a tensor $\bT^*$ is of low symmetric rank iff it can be expressed as a sum of a small number of symmetric, rank-one components. See~\cite{comon2008symmetric, comon2020topology} for details on symmetric tensors.

\paragraph{Motivation} The setting we consider is particularly motivated by the study of interaction effects in statistics and the analysis of two-layer polynomial neural networks, as elaborated below.

One motivation for our setup arises from the study of higher-order interaction effects, where interactions may occur among pairs, triples, or more generally, $k$-tuples ($k\in\mathbb{N}$); see related discussions in \cite{bien2013lasso,basu2018iterative,hao2020sparse}. In such settings, it is often reasonable to assume that the unknown tensor exhibits a low-rank and/or sparse structure. This is particularly motivated by applications in biomedical fields and beyond; see \cite{sidiropoulos2012multi,hung2016detection} and \cite[Appendix~A]{hao2020sparse} for detailed discussions. A comprehensive analysis of cubic sketching, specifically focusing on pairwise and triple-wise interactions (the case $\ell=3$), was conducted in \cite{hao2020sparse}.

\subsection{Connections to Learning Two-Layer Polynomial Networks}\label{sec:NNs}
Our setup is closely related to the problem of learning two-layer neural networks with polynomial activations, commonly referred to as polynomial networks. Suppose that $\bX_i\in\R^d,i\in[N]$ are inputs and $Y_i$ are the corresponding
labels generated by a two-layer neural network of width $r$ and polynomial activation functions $\sigma(t)=t^\ell$:
\begin{equation}\label{eq:NN-Expression}
Y_i = \sum_{1\le j\le r} a_j^* \sigma\bigl(\langle \boldsymbol{W}_j^*,\bX_i\rangle) = \sum_{1\le j\le r}a_j^* \ip{\boldsymbol{W}_j^*}{\bX_i}^\ell, \quad \forall i\in[N].
\end{equation}
Given training data $(Y_i,\bX_i),i\in[N]$ the goal is to recover the weights  $a_i^*\in\R$ and $\boldsymbol{W}_i^*\in\R^d$ of the underlying network. This setup is known as the teacher-student model~\cite{goldt2019dynamics}. 

Although less common in practice, polynomial networks are still an active area of research. They possess strong expressive power and can simulate deep sigmoidal networks~\cite{livni2014computational}. Furthermore, they serve as a good approximation for networks with general non-linear activations and help in studying complex optimization landscapes~\cite{venturi2019spurious}. 
For further references on polynomial networks, see~\cite{soltanolkotabi2018theoretical,du2018power,emschwiller2020neural,sarao2020optimization,kizildag2022algorithms,martin2024impact,gamarnik2024stationary}.

We now return to~\eqref{eq:NN-Expression} and observe that the labels $Y_i$ satisfy
\begin{align}\label{eq:From-NN-to-Tensor}
Y_i &= \sum_{i_1,\dots,i_\ell\in[d]}\,\sum_{1\le j\le r}a_j^* \boldsymbol{W}_j(i_1)\cdots \boldsymbol{W}_j(i_r) \bX_i(i_1)\cdots \bX_i(i_\ell)=\langle \bT^*,\bX_i^{\otimes \ell}\rangle ,
\end{align}
where
\begin{equation}\label{eq:NN--TENSOR}
    \bT^* = \sum_{1\le j\le r}a_j^* \bigl(\boldsymbol{W}_j^*\bigr)^{\otimes \ell}
\end{equation}
is an order-$\ell$ symmetric tensor with symmetric rank at most $r$, $\mathrm{rank}_S(\bT^*) \le r$.  Therefore, given training data $(Y_i,\bX_i)$ generated by a teacher network, the problem of recovering the tensor representation $\bT^*$ of the underlying network fits precisely within the scope of the setting we consider. 
\begin{remark}\label{remark:tractability}
    After obtaining an estimate $\widehat{\bT}$, decomposing it as $\widehat{\bT} = \sum_{i\le s}\widehat{a}_i \widehat{\boldsymbol{W}}_i^{\otimes \ell}$, yields a set of weights for the underlying network. Although tensor decomposition is in general NP-hard~\cite{haastad1989tensor,haastad1990tensor,hillar2013most}, we can set aside the tractability issues and focus instead on the uniqueness perspective. Specifically, we ask: what is the smallest sample size required to uniquely identify the tensor representation of the underlying network? 
\end{remark}
\paragraph{Notation} An order-$\ell$ tensor $\bT\in\R^{d\times \cdots \times d}$ is a multidimensional array of numbers $\bT_{i_1,\dots,i_\ell}\in\R$, $i_1,\dots,i_\ell\in[d]$, where  $[d]:=\{1,\dots,d\}$ for any $d\in\mathbb{N}$. We denote the usual Euclidean inner product by $\langle \cdot,\cdot,\rangle$, where the underlying dimension will be clear from the context. Given an order-$\ell$ tensor $\bT\in\R^{d\times \cdots \times d}$, $\|\bT\|_F$ denotes its Frobenius norm $\sqrt{\langle \bT,\bT\rangle}$, with inner product taken in $\R^{d^\ell}$. Throughout, $\mathcal{D}$ denotes an arbitrary log-concave distribution on $\R$ with a density function. For $r\in\R,\exp(r):= e^r$. We denote the all ones vector by $\boldsymbol{1}$, and reflect the underlying dimension with a subscript. We use standard asymptotic notation, such as $\Omega(\cdot),O(\cdot),\Theta(\cdot),o(\cdot)$. We also use $\widetilde{\Omega}(\cdot),\widetilde{O}(\cdot),\widetilde{\Theta}(\cdot)$ for hiding logarithmic factors.
\section{Main Results}
Suppose that $r\in\mathbb{N}$ and $\bT^*\in\R^{d\times \cdots \times d}$ is an unknown, order-$\ell$ tensor with low symmetric rank, $\mathrm{rank}_S(\bT^*)\le r$ in the sense of~\eqref{eq:SymRank}. Given i.i.d.\,random vectors $\bX_i\sim \mathcal{D}^{\otimes d}$, the goal is to recover $\bT^*$ from symmetric, rank-one measurements $(Y_i,\bX_i)$, where
\begin{equation}\label{eq:DATA-COND}
Y_i = \bigl\langle \bT^*,\bX_i^{\otimes \ell}\rangle, \quad i\in[N].
\end{equation}
Motivated by a line of research in compressed sensing and low-rank matrix recovery~\cite{candes2005decoding,candes2006robust,cai2010singular,candes2010matrix,candes2012exact,eldar2012uniqueness,mu2014square}, we consider the following natural rank minimization program:
\begin{equation}\label{eq:Recovery-Program}
 %  \displaystyle \widehat{\bT} = \,\,\,\,\,\, 
    \begin{aligned}
    &\underset{\bT\in\R^{d\times \cdots \times d}}{\min}
    & &\mathrm{rank}_S(\bT) \\
    &\text{subject to} 
    & &  \langle\bT,\boldsymbol{X}_i^{\otimes \ell}\rangle =Y_i,\,i=1,\dots,N
    \end{aligned}
\end{equation}
Throughout, $\widehat{\bT}$ denotes the solution to the program~\eqref{eq:Recovery-Program} given the data $(Y_i,\bX_i),i\in[N]$, as defined in~\eqref{eq:DATA-COND}.
%\begin{equation}\label{eq:Recovery-Program}
    %\widehat{\bT} = \argmin_{\bT\in\R^{d\times \cdots \times d}} \mathrm{rank}_S(\bT) \quad\text{such that}\quad \langle\bT,\boldsymbol{X}_i^{\otimes \ell}\rangle =Y_i,\forall i\in[N]
%\end{equation}
Even though solving~\eqref{eq:Recovery-Program} is NP-hard~\cite{hillar2013most}, its analysis gives a benchmark for evaluating computationally efficient methods, see Section~\ref{sec:IT-BDS} for details. 

Our main result establishes a sample size upper bound for~\eqref{eq:Recovery-Program}. 
\begin{theorem}\label{thm:MAIN}
Let $C>2\ell^2$ be an arbitrary constant and $N\ge Crd$. Then, \eqref{eq:Recovery-Program} recovers all $\bT^*$ with $\mathrm{rank}_S(\bT^*)\le r$ with probability one.
\end{theorem}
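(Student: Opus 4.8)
The plan is to reduce the statement to an injectivity property of the measurement map, and then verify that property for almost every choice of $\bX_1,\dots,\bX_N$. First, since $\bT^*$ is itself feasible for~\eqref{eq:Recovery-Program} and $\mathrm{rank}_S(\bT^*)\le r$, any minimizer $\widehat{\bT}$ satisfies $\mathrm{rank}_S(\widehat{\bT})\le r$, and in particular $\widehat{\bT}$ is symmetric. Hence $\Delta:=\bT^*-\widehat{\bT}$ is symmetric, $\mathrm{rank}_S(\Delta)\le \mathrm{rank}_S(\bT^*)+\mathrm{rank}_S(\widehat{\bT})\le 2r$ by sub-additivity of the symmetric rank, and $\langle\Delta,\bX_i^{\otimes\ell}\rangle=Y_i-Y_i=0$ for all $i\in[N]$. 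It therefore suffices to show that, with probability one, the only symmetric tensor $\bT$ with $\mathrm{rank}_S(\bT)\le 2r$ and $\langle\bT,\bX_i^{\otimes\ell}\rangle=0$ for every $i$ is $\bT=0$; crucially, this event is independent of $\bT^*$, so proving it simultaneously handles every admissible $\bT^*$.

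For the core claim I would use a dimension-counting argument. Writing $\mathcal{V}:=\{\bT:\mathrm{rank}_S(\bT)\le 2r\}$, one sees that $\mathcal{V}$ is a finite union (over sign patterns of the coefficients) of images of polynomial maps $F\colon(\R^d)^{2r}\to\R^{d\times\cdots\times d}$ of the form $F(\boldsymbol{w}_1,\dots,\boldsymbol{w}_{2r})=\sum_{j\le 2r}\pm\,\boldsymbol{w}_j^{\otimes\ell}$, so its parameter space has dimension $2rd$. The structural input is the classical identification of symmetric tensors with homogeneous polynomials: for symmetric $\bT$, the polynomial $x\mapsto\langle\bT,x^{\otimes\ell}\rangle$ vanishes identically if and only if $\bT=0$. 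Thus, whenever $F(\theta)\ne 0$, the map $x\mapsto\langle F(\theta),x^{\otimes\ell}\rangle=\sum_{j}\pm\langle\boldsymbol{w}_j,x\rangle^{\ell}$ is a nonzero polynomial on $\R^d$, so its zero locus is a hypersurface of dimension $d-1$. Consider the incidence set
\[
\mathcal{I}=\bigl\{(\theta,\bX_1,\dots,\bX_N)\;:\;F(\theta)\ne 0,\ \langle F(\theta),\bX_i^{\otimes\ell}\rangle=0\text{ for all }i\bigr\}.
\]
Projecting $\mathcal{I}$ onto $\theta$, each fiber is a product of $N$ such hypersurfaces, of dimension $N(d-1)$, so by the fiber-dimension theorem $\dim\mathcal{I}\le 2rd+N(d-1)$; projecting $\mathcal{I}$ onto $(\bX_1,\dots,\bX_N)$ then yields a constructible subset of $\R^{dN}$ of dimension $\le 2rd+N(d-1)$, which is strictly less than $dN$ once $N>2rd$. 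Hence the set of ``bad'' configurations lies in a proper algebraic subset of $\R^{dN}$ (a routine complexification justifies working over $\R$), which has Lebesgue measure zero; since $\mathcal{D}$ has a density, $\mathcal{D}^{\otimes dN}$ is absolutely continuous and assigns this set probability zero. As $N\ge Crd>2rd$ because $C>2\ell^2\ge 2$, the claim follows --- indeed this route already succeeds for any constant $C>2$.

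Alternatively --- and this appears to be the route indicated by the abstract --- one can give a self-contained, quantitative proof via covering numbers: $\epsilon$-net the unit-Frobenius-norm slice of $\mathcal{V}$ (covering number $(\text{const}/\epsilon)^{\Theta(rd)}$), invoke the Carbery--Wright inequality to bound $\pr\bigl[\,|\langle\bT_0,\bX^{\otimes\ell}\rangle|\le t\,\bigr]\le c\ell\,(t/\sigma)^{1/\ell}$ for each net point $\bT_0$, take a union bound over the net and over the $N$ independent measurements, and let $\epsilon\to 0$ on the probability-one event that $\max_i\|\bX_i\|$ is finite. I expect the genuine obstacle in this route to be the \emph{uniform anti-concentration input}: one must lower-bound $\mathrm{Var}_{\mathcal{D}}\!\bigl(\langle\bT_0,\bX^{\otimes\ell}\rangle\bigr)$ by a positive constant depending only on $(\mathcal{D},\ell)$, uniformly over all symmetric $\bT_0$ with $\|\bT_0\|_F=1$; equivalently, one must rule out that the degree-$\ell$ homogeneous polynomial attached to $\bT_0$ is ``nearly degenerate'' with respect to $\mathcal{D}$, and this is exactly where expanding in the orthogonal polynomials of $\mathcal{D}$ is needed. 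Balancing the covering exponent $\Theta(rd)$ against the Carbery--Wright rate $t^{1/\ell}$ and the $\|\bX_i\|^{\ell}$ growth then yields a threshold of the form $N\gtrsim\ell^2 rd$, matching the stated constant; by contrast, in the algebraic argument above the only delicate point is making the dimension count rigorous over $\R$.
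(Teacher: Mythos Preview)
Your reduction step is exactly the paper's (this is Proposition~\ref{Prop:MAIN}), and your second sketch---$\epsilon$-net on the unit-Frobenius slice of rank-$\le 2r$ symmetric tensors, Carbery--Wright for anti-concentration, orthogonal-polynomial expansion to lower-bound $\mathbb{E}[\langle\bT,\bX^{\otimes\ell}\rangle^2]$ uniformly in $\bT$, union bound, then $\epsilon\to 0$---is precisely the paper's proof, and you have correctly identified the genuine technical obstacle as the variance lower bound (Proposition~\ref{prop:f-Expand}).

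Your primary route, however, is genuinely different. The paper never uses algebraic geometry; it proceeds entirely through the covering/Carbery--Wright machinery, which is why the constant $C>2\ell^2$ appears (the $\epsilon^{1/\ell}$ in Carbery--Wright must beat the $\epsilon^{-2r\ell d}$ in the covering bound, forcing $N>2r\ell^2 d$; see Remark~\ref{remark:Quadratic}). Your incidence-variety argument sidesteps both Carbery--Wright and the orthogonal-polynomial lemma, uses only that $\mathcal{D}$ has a density (not log-concavity), and yields the sharper threshold $N>2rd$. The trade-offs: the paper's approach is quantitative in $\epsilon$ and would adapt more naturally to approximate/noisy recovery, whereas your dimension count gives only a measure-zero statement but with minimal hypotheses and a better constant. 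The one point you flagged---making the fiber-dimension bound rigorous over $\R$---is indeed handled either by complexifying and invoking Chevalley plus the fact that real points of a proper complex variety have measure zero, or by staying real and citing the semi-algebraic dimension formula (e.g., Bochnak--Coste--Roy, Theorem~2.8.8 and Tarski--Seidenberg); either way it is routine, as you say.
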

See Section~\ref{sec:Road-to-Proof} for an outline of the proof. Several remarks are in order.

Since $\ell=O(1)$, Theorem~\ref{thm:MAIN} yields that $N=\Omega(rd)$ samples suffice for recovery. We highlight that Theorem~\ref{thm:MAIN} addresses the problem of \emph{strong recovery}: when $N=\Omega(dr)$,~\eqref{eq:Recovery-Program} successfully recovers \emph{any} $\bT^*$ with $\mathrm{rank}_S(\bT^*)\le r$. More formally,
\[
\mathbb{P}_{\bX_i\sim \mathcal{D}^{\otimes d}, i\in [N]}\Bigl[\forall \bT^*\in\bigl\{\bT\in\R^{d\times \cdots \times d}:\mathrm{rank}_S(\bT)\le r\bigr\}:\widehat{\bT} = \bT^*\Bigr]=1,
\]
provided $N=\Omega(dr)$. Theorem~\ref{thm:MAIN} holds under minimal assumptions. Notably, it applies to any arbitrary log-concave distribution $\mathcal{D}$. Furthermore, we impose no structural restrictions on $\bT^*$ beyond the rank constraint $\mathrm{rank}_S(\bT^*)\le r$. For a short discussion on the quadratic dependence of sample complexity on $\ell$, see Remark~\ref{remark:Quadratic}.

\paragraph{Implications for Neural Networks} Theorem~\ref{thm:MAIN}
has direct consequences for learning two-layer polynomial networks.
\begin{theorem}\label{coro:NN}
Suppose $S_r$ is the set of all two-layer neural networks $f:\R^d\to \R$ with activation function $\sigma(t)=t^\ell$ and width at most $r$: $f\in S_r$ iff there exists $a_1,\dots,a_r\in\R$ and $W_1,\dots,W_r\in\R^d$ such that $f(x) = \sum_{j\le r}a_j \sigma(\langle W_j,x\rangle)$ for all $x\in\R^d$. Let $\boldsymbol{f}^*\in S_r$ be fixed, $\bX_i\sim \mathcal{D}^{\otimes d},i\in[N]$ be i.i.d.\,random vectors and $Y_i = \boldsymbol{f}^*(\bX_i)$ for all $i\in[N]$. Provided $N=\Omega(dr)$, we have
\[
\mathbb{P}\Bigl[\forall \boldsymbol{f}^*\in S_r:\Bigl\{f\in S_r:Y_i = \boldsymbol{f}^*(\bX_i),\forall i\in[N]\Bigr\}=\{\boldsymbol{f}^*\}\Bigr]=1.
\]
\end{theorem}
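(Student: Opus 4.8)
The plan is to deduce Theorem~\ref{coro:NN} from Theorem~\ref{thm:MAIN} by passing to the tensor representation of polynomial networks. First I would record the correspondence $f\mapsto\bT_f$ already implicit in~\eqref{eq:From-NN-to-Tensor}--\eqref{eq:NN--TENSOR}: to each $f\in S_r$, say $f(x)=\sum_{j\le r}a_j\langle W_j,x\rangle^\ell$, associate the order-$\ell$ symmetric tensor $\bT_f:=\sum_{j\le r}a_j W_j^{\otimes\ell}$, which has $\mathrm{rank}_S(\bT_f)\le r$; using the identity $\langle W,x\rangle^\ell=\langle W^{\otimes\ell},x^{\otimes\ell}\rangle$ one gets $f(x)=\langle\bT_f,x^{\otimes\ell}\rangle$ for every $x\in\R^d$. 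It is worth stating up front that the uniqueness asserted in Theorem~\ref{coro:NN} is uniqueness of $f$ \emph{as a function} $\R^d\to\R$, equivalently of its tensor representation $\bT_f$, and not of the weights $(a_j,W_j)_{j\le r}$: the latter can never be identified because of the permutation and scaling symmetries of the network and the non-uniqueness of tensor decompositions (cf.\ Remark~\ref{remark:tractability}).

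Second, I would extract from Theorem~\ref{thm:MAIN} the following ``injectivity on low-rank tensors'' statement: with probability one over $\bX_1,\dots,\bX_N\sim\mathcal{D}^{\otimes d}$ there do \emph{not} exist distinct tensors $\bT\ne\bT'$ with $\mathrm{rank}_S(\bT),\mathrm{rank}_S(\bT')\le r$ and $\langle\bT,\bX_i^{\otimes\ell}\rangle=\langle\bT',\bX_i^{\otimes\ell}\rangle$ for all $i\in[N]$. Indeed, if such a pair existed, relabel so that $\mathrm{rank}_S(\bT)\le\mathrm{rank}_S(\bT')$ and consider program~\eqref{eq:Recovery-Program} fed with the measurements $Y_i=\langle\bT',\bX_i^{\otimes\ell}\rangle$, $i\in[N]$, and ground truth $\bT^*=\bT'$; then $\bT$ is feasible with objective $\mathrm{rank}_S(\bT)\le\mathrm{rank}_S(\bT')$, so either the (unique) minimizer has rank strictly below $\mathrm{rank}_S(\bT')$ and hence differs from $\bT'$, or $\bT$ and $\bT'$ are both optimal and the minimizer is not unique; either way $\widehat{\bT}=\bT'$ fails, contradicting Theorem~\ref{thm:MAIN}. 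Let $\mathcal{E}$ denote the probability-one event on which this injectivity holds; by Theorem~\ref{thm:MAIN}, valid whenever $N\ge Crd$ with $C>2\ell^2$ (i.e.\ whenever $N=\Omega(dr)$, since $\ell=O(1)$), we have $\mathbb{P}[\mathcal{E}]=1$, and $\mathcal{E}$ does not depend on any particular network.

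Finally I would argue on $\mathcal{E}$. Fix any $\boldsymbol{f}^*\in S_r$, set $Y_i=\boldsymbol{f}^*(\bX_i)$, and let $f\in S_r$ be arbitrary with $f(\bX_i)=Y_i$ for all $i\in[N]$. Then $\bT_f$ and $\bT_{\boldsymbol{f}^*}$ both have symmetric rank at most $r$, and $\langle\bT_f,\bX_i^{\otimes\ell}\rangle=f(\bX_i)=\boldsymbol{f}^*(\bX_i)=\langle\bT_{\boldsymbol{f}^*},\bX_i^{\otimes\ell}\rangle$ for every $i$, so the property defining $\mathcal{E}$ forces $\bT_f=\bT_{\boldsymbol{f}^*}$; consequently $f(x)=\langle\bT_f,x^{\otimes\ell}\rangle=\langle\bT_{\boldsymbol{f}^*},x^{\otimes\ell}\rangle=\boldsymbol{f}^*(x)$ for all $x\in\R^d$, i.e.\ $f=\boldsymbol{f}^*$. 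Since $\boldsymbol{f}^*$ trivially meets the interpolation constraints, this proves $\{f\in S_r:Y_i=\boldsymbol{f}^*(\bX_i),\,\forall i\in[N]\}=\{\boldsymbol{f}^*\}$, and since $\mathcal{E}$ was chosen without reference to $\boldsymbol{f}^*$, this holds simultaneously for every $\boldsymbol{f}^*\in S_r$ on $\mathcal{E}$, which is exactly the asserted probability-one statement. The only genuinely delicate step is the second one, namely upgrading the literal conclusion $\widehat{\bT}=\bT^*$ of Theorem~\ref{thm:MAIN} to injectivity of the measurement map on the entire set of tensors of symmetric rank at most $r$; the remainder is bookkeeping resting on $\langle W,x\rangle^\ell=\langle W^{\otimes\ell},x^{\otimes\ell}\rangle$.
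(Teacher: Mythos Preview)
Your argument is correct. The only difference from the paper is the intermediate result you invoke: the paper derives both Theorem~\ref{thm:MAIN} and Theorem~\ref{coro:NN} directly from Proposition~\ref{Prop:MAIN}, which states that with probability one the set $\zeta_S(2r)$ meets the null space $\{\bT:\langle\bT,\bX_i^{\otimes\ell}\rangle=0,\,\forall i\}$ only at the origin. Given that null-space property, the injectivity you need is immediate: if $f\ne\boldsymbol{f}^*$ in $S_r$, then $\bT_f-\bT_{\boldsymbol{f}^*}$ is a nonzero tensor of symmetric rank at most $2r$ (by subadditivity) lying in the null space, a contradiction. You instead take Theorem~\ref{thm:MAIN} as a black box and reverse-engineer the same injectivity from the fact that rank minimization succeeds for every ground truth of rank $\le r$; your case analysis on $\mathrm{rank}_S(\bT)$ versus $\mathrm{rank}_S(\bT')$ is sound. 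The paper's route is shorter and avoids any discussion of whether the minimizer in~\eqref{eq:Recovery-Program} is attained or unique, but your detour buys a self-contained deduction that uses only the \emph{statement} of Theorem~\ref{thm:MAIN}, not its proof ingredients.
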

See Section~\ref{sec:Road-to-Proof} for an outline of the proof. Theorem~\ref{coro:NN} asserts that $\boldsymbol{f}^*$ is the unique two-layer neural network of width at most $r$ fitting the training data, $Y_i = \boldsymbol{f}^*(\bX_i),\forall i\in[N]$. 

For polynomial networks, Theorem~\ref{coro:NN} yields superior sample complexity bounds in certain regimes. Notably, we impose no restrictions on the magnitude of components of $\bT^*$. When $\bT^*$ is interpreted as the tensor representation of a network~\eqref{eq:NN--TENSOR}, this means our sample bounds remain valid without constraints on the underlying weights, such as bounded norm. For further discussion, see Section~\ref{sec:NN-implication}.
\subsection{Comparison with Empirical Risk Minimization} Adopting a standard learning-theoretical lens, one may bypass~\eqref{eq:Recovery-Program} and instead attempt to recover $\bT^*$ from data~\eqref{eq:DATA-COND} by solving the empirical risk minimization (ERM) problem:
\[
\min_{\bT\in\mathcal{S}_\ell} \mathcal{L}_S(\bT),\quad\text{where}\quad \mathcal{L}_S(\bT) = \frac1N\sum_{i\le N}\left(Y_i - \bigl\langle \bT,\bX_i^{\otimes \ell}\bigr\rangle\right)^2,
\]
and $\mathcal{S}_\ell$ is the set of all order-$\ell$ symmetric tensors.\footnote{The constraint $\bT\in\mathcal{S}_\ell$ ensures that for any $1\le i_1,\dots,i_\ell \le d$, the entry $\bT_{i_1,\dots,i_\ell}$ is determined solely by the $d$-tuple $(\alpha_1,\dots,\alpha_d)$ where $\alpha_i$ counts the occurrences of $i\in[d]$ among $(i_1,\dots,i_\ell)$. This can be incorporated into the optimization by reducing the number of variables to $\binom{d+\ell-1}{\ell}$, which is the dimension of $\mathcal{S}_\ell$~\cite{comon2008symmetric}.} As we show next, this leads to a significantly worse sample complexity, since it inherently disregards the low-rank structure in $\bT^*$.
\begin{theorem}\label{thm:ERM-Sample}
Let $N^*(d,\ell):= \binom{d+\ell-1}{\ell}$ and $\bX_i\sim \mathcal{D}^{\otimes d}$, $i\in [N]$ be i.i.d.\,random vectors.
\begin{itemize}
    \item[(a)] Suppose that $N\ge N^*(d,\ell)$. Then, $\mathbb{P}\bigl[\bigl\{\bT:\mathcal{L}_S(\bT) = 0\bigr\} = \{\bT^*\}\bigr]=1$.
    \item[(b)] Suppose that $N<N^*(d,\ell)$. Then, $
    \mathbb{P}\bigl[\forall \bT^*,\forall M>0:Z_M(\bT^*)\ne \varnothing\bigr]=1$, where 
    \[
Z_M(\bT^*) =\Bigl\{\bT:\mathrm{rank}_S(\bT)<\infty,\mathcal{L}(\bT)=0,\mathbb{E}_{\bX\sim \mathcal{D}^{\otimes d}}\bigl[\bigl(\langle \bT,\bX^{\otimes \ell}\rangle - \langle \bT^*,\bX^{\otimes \ell}\rangle\bigr)^2\bigr]\ge M\Bigr\}.
\]
\end{itemize}
\end{theorem}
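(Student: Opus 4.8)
The plan is to treat the two parts separately, both resting on the observation that the map $\bT\mapsto (\langle \bT,\bX_i^{\otimes \ell}\rangle)_{i\le N}$ restricted to the $N^*(d,\ell)$-dimensional space $\mathcal{S}_\ell$ of symmetric tensors is a linear map, and that its injectivity is equivalent to the rank of a certain random matrix. For part (a), I would first reduce to symmetric tensors: the quadratic loss $\mathcal{L}_S$ depends on $\bT$ only through the symmetrization of $\bT$ (since $\langle \bT,\bX^{\otimes \ell}\rangle = \langle \bT_{\mathrm{sym}},\bX^{\otimes \ell}\rangle$), and $\bT^*$ is itself symmetric by~\eqref{eq:NN--TENSOR}; so $\{\bT:\mathcal{L}_S(\bT)=0\}=\{\bT^*\}$ holds iff within $\mathcal{S}_\ell$ the only tensor with $\langle \bT,\bX_i^{\otimes\ell}\rangle=\langle\bT^*,\bX_i^{\otimes\ell}\rangle$ for all $i$ is $\bT^*$ itself. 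Writing $\bT=\bT^*+\Delta$ with $\Delta\in\mathcal{S}_\ell$, this is injectivity of $\Delta\mapsto(\langle\Delta,\bX_i^{\otimes\ell}\rangle)_i$. Identifying $\langle\Delta,\bX^{\otimes\ell}\rangle$ with a homogeneous degree-$\ell$ polynomial $p_\Delta$ in the coordinates of $\bX$ (the coefficient vector of $p_\Delta$ being a linear bijective reparametrization of the $N^*(d,\ell)$ free entries of $\Delta$), injectivity fails iff there is a nonzero homogeneous degree-$\ell$ polynomial vanishing at all $N\ge N^*(d,\ell)$ points $\bX_1,\dots,\bX_N$. I would then invoke the standard fact that, since $\mathcal{D}$ has a density, almost surely no nonzero polynomial of bounded degree vanishes on a generic configuration of at least as many points as the dimension of the polynomial space: concretely, the $N^*\times N^*$ submatrix (Vandermonde-type in the monomials of degree $\ell$) formed by the first $N^*$ samples has nonzero determinant almost surely, because that determinant is itself a nonzero polynomial in the $dN^*$ coordinates of $\bX_1,\dots,\bX_{N^*}$ and a nonzero polynomial evaluated at a point with an absolutely continuous distribution is nonzero a.s. Hence the linear system has full column rank a.s., giving $\Delta=0$.

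For part (b), when $N<N^*(d,\ell)$ the linear map $\Delta\mapsto(\langle\Delta,\bX_i^{\otimes\ell}\rangle)_i$ on the $N^*$-dimensional space $\mathcal{S}_\ell$ has nontrivial kernel for \emph{every} realization (not just almost surely), simply because the target space $\R^N$ has strictly smaller dimension. So there is always a nonzero symmetric tensor $\Delta_0$ with $\langle\Delta_0,\bX_i^{\otimes\ell}\rangle=0$ for all $i$; then $\bT=\bT^*+s\Delta_0$ interpolates the data ($\mathcal{L}(\bT)=0$, using $\mathcal{L}$ to denote the empirical loss on the realized samples as in the statement) and has finite symmetric rank for every scalar $s$. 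It remains to pick $s$ making the population discrepancy $\mathbb{E}_{\bX}[(\langle\bT-\bT^*,\bX^{\otimes\ell}\rangle)^2] = s^2\,\mathbb{E}_{\bX}[p_{\Delta_0}(\bX)^2]$ exceed any given $M$. This needs $\mathbb{E}_{\bX}[p_{\Delta_0}(\bX)^2]>0$, i.e.\ the polynomial $p_{\Delta_0}$ is not $\mathcal{D}^{\otimes d}$-a.s.\ zero; but a nonzero polynomial is nonzero on a set of positive Lebesgue measure, and $\mathcal{D}^{\otimes d}$ has a density, so this expectation is strictly positive. Taking $s$ large enough (depending on $\Delta_0$ and $M$) places $\bT$ in $Z_M(\bT^*)$. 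Since $\Delta_0$ exists for every realization and every $\bT^*$, the event holds with probability one for all $\bT^*$ and all $M$ simultaneously.

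The main technical obstacle is the genericity argument underpinning part (a): one must verify that the relevant $N^*\times N^*$ "monomial Vandermonde" determinant is \emph{not identically zero} as a polynomial in the sample coordinates — equivalently, that the degree-$\ell$ monomials in $d$ variables are linearly independent as functions, so that some point configuration makes the evaluation matrix invertible. This is true (the monomials are visibly linearly independent), and once non-identical-vanishing is established the a.s.\ statement follows from the elementary lemma that an absolutely continuous random vector avoids the zero set of any fixed nonzero polynomial; I would state and use that lemma explicitly. A secondary point to handle carefully is the passage between "$\bT$ ranges over all tensors" in the statement of $\mathcal{L}_S$ and "$\bT$ ranges over $\mathcal{S}_\ell$", which is immediate from symmetrization but should be spelled out so that the uniqueness claim in (a) is about $\bT^*$ among \emph{all} tensors, matching the theorem as written — note this is consistent since $\bT^*$ is symmetric, so if $\langle\bT,\bX_i^{\otimes\ell}\rangle=Y_i$ forces $\bT_{\mathrm{sym}}=\bT^*$, and if additionally we interpret $\mathcal{L}_S(\bT)=0$ over all $\bT$ then any asymmetric part is unconstrained — so strictly the clean statement is that $\bT^*$ is the unique \emph{symmetric} zero of $\mathcal{L}_S$, which I would note matches the intended reading via the footnote's reduction to $\mathcal{S}_\ell$.
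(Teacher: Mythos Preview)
Your proposal is correct and follows essentially the same route as the paper: both parts reduce to analyzing the linear map $\Delta\mapsto(\langle\Delta,\bX_i^{\otimes\ell}\rangle)_i$ on $\mathcal{S}_\ell$, use the monomial Vandermonde determinant argument for (a), and for (b) use rank-nullity to find a kernel direction $\bar{\bT}$ and scale $\bT^*+\lambda\bar{\bT}$. The only cosmetic difference is that the paper spells out $\mathrm{rank}_S(\bT(\lambda))\le (2d)^\ell$ via the polarization identity, whereas you simply invoke the standard fact that every symmetric tensor has finite symmetric rank; both are fine.
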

In $\mathrm{(b)}$, $\bX$ is a new sample drawn from $\mathcal{D}^{\otimes d}$ independent of $\bX_i$.
See Section~\ref{pf:ERM_SAMPLE} for the proof.  

While Part $\mathrm{(a)}$ establishes that the empirical risk has a unique global minimum for $N\ge N^*(d,\ell)$, Part $\mathrm{(b)}$ shows that for $N<N^*(d,\ell)$, it has global minima with arbitrarily large generalization error on the new data. This highlights a sharp contrast between structured and unstructured recovery: when $\ell=O(1)$ and $r$ is small, $N^*(d, \ell)$ is of order $\Theta(d^\ell)$, leading to a dramatically worse sample complexity than $\Theta(dr)$ bound per Theorem~\ref{thm:MAIN}. 
\subsection{Sample Complexity Lower Bounds}\label{sec:LOWER-BD}
In this section, we derive sample complexity lower bounds, showing that a bound of the form $\widetilde{\Omega}(dr^{1-\gamma})$, where $\gamma>0$ is arbitrarily small, is necessary. We focus on a discrete setting for simplicity, and leave the extension to the continuous case for future work. 

To obtain a lower bound, we impose a statistical model on the unknown tensor $\bT^*$ to be recovered. We establish the following proposition which of potential independent interest.
\begin{proposition}\label{prop:TENSOR_PACKING}
For every $\ell\ge 101$ and $r=o(d^{50}/\log^{50} d)$, there exists a set \[
\Psi \subset \{\bT\in\R^{d\times \cdots \times d}: \mathrm{rank}_S(\bT)\le r\}\] such that the following holds for all sufficiently large $d,r$.
\begin{itemize}
    \item $|\Psi|\ge e^{\Omega(dr^{0.98})}$.
    \item For any distinct $\bT,\bT'\in \Psi$, $\|\bT-\bT'\|_F \ge 1$.
\end{itemize}
Furthermore, for any $\bT\in \Psi$, $\|\bT\|_F\le r$ and that $d^{\ell/2}\bT$ has integer-valued entries.
\end{proposition}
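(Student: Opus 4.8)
The plan is to build $\Psi$ explicitly as the set of all sums of $r$ symmetric rank-one tensors drawn from a well-chosen family of ``nearly orthogonal'' scaled sign vectors, and then verify the three required properties one by one. The engine of the whole argument is the elementary identity $\langle \boldsymbol{v}^{\otimes \ell},\boldsymbol{w}^{\otimes \ell}\rangle=\langle \boldsymbol{v},\boldsymbol{w}\rangle^\ell$, valid for all $\boldsymbol{v},\boldsymbol{w}\in\R^d$, which turns control of pairwise inner products of vectors into control of pairwise inner products of the associated rank-one tensors, amplified by the exponent $\ell$.

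\textbf{Step 1 (the vector family).} Set $\rho:=(4r)^{-1/\ell}$. I would first produce a set $\mathcal{V}\subseteq\{-1/\sqrt d,\,1/\sqrt d\}^d$ with $|\mathcal{V}|=M$ and $|\langle \boldsymbol{v},\boldsymbol{w}\rangle|\le\rho$ for all distinct $\boldsymbol{v},\boldsymbol{w}\in\mathcal{V}$, where $M=e^{\Theta(d\rho^2)}$. This is a routine probabilistic construction: sample $M$ i.i.d.\ uniform sign vectors and rescale by $1/\sqrt d$; for a fixed pair, $\langle\boldsymbol{v},\boldsymbol{w}\rangle$ is the average of $d$ i.i.d.\ Rademacher variables, so Hoeffding's inequality gives $\pr[|\langle\boldsymbol{v},\boldsymbol{w}\rangle|>\rho]\le 2e^{-d\rho^2/2}$; a union bound over the $\binom M2$ pairs succeeds as soon as $M\le e^{d\rho^2/4}$, so one may take $M=\lfloor e^{d\rho^2/5}\rfloor$. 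Every $\boldsymbol{v}\in\mathcal{V}$ has $\|\boldsymbol{v}\|_2=1$ exactly, and $d^{\ell/2}\boldsymbol{v}^{\otimes\ell}$ has all entries in $\{-1,+1\}$.

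\textbf{Step 2 (the packing and its properties).} Define $\Psi:=\bigl\{\bT_S:=\sum_{\boldsymbol{v}\in S}\boldsymbol{v}^{\otimes\ell}\;:\;S\subseteq\mathcal{V},\ |S|=r\bigr\}$. Each $\bT_S$ is a sum of $r$ symmetric rank-one tensors, so $\mathrm{rank}_S(\bT_S)\le r$; the tensor $d^{\ell/2}\bT_S=\sum_{\boldsymbol{v}\in S}d^{\ell/2}\boldsymbol{v}^{\otimes\ell}$ has integer entries (each in $\{-r,\dots,r\}$); and by the triangle inequality $\|\bT_S\|_F\le\sum_{\boldsymbol{v}\in S}\|\boldsymbol{v}\|_2^{\ell}=r$. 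For the separation, fix distinct $S,S'$ with $|S|=|S'|=r$ and let $k:=|S\setminus S'|=|S'\setminus S|\ge 1$; the common part cancels, and expanding $\|\bT_S-\bT_{S'}\|_F^2$ with the identity above yields a diagonal contribution of exactly $2k$ (unit vectors) plus at most $2k(k-1)+2k^2\le 4k^2$ off-diagonal terms, each of magnitude at most $\rho^\ell$. Hence $\|\bT_S-\bT_{S'}\|_F^2\ge 2k-4k^2\rho^\ell$, and since $k\le r$ and $\rho^\ell=1/(4r)$ the subtracted quantity is at most $k^2/r\le k$, giving $\|\bT_S-\bT_{S'}\|_F^2\ge k\ge 1$. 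Finally, for the cardinality, $|\Psi|=\binom Mr\ge (M/r)^r$, so $\log|\Psi|\ge r(\log M-\log r)=\Theta\bigl(d\rho^2 r\bigr)-r\log r=\Theta\bigl(dr^{1-2/\ell}\bigr)-r\log r$, using $\rho^2=(4r)^{-2/\ell}$ and $4^{-2/\ell}=\Theta(1)$. Since $\ell\ge 101$ we have $1-2/\ell\ge 99/101>0.98$, so $dr^{1-2/\ell}\ge dr^{0.98}$; and the hypothesis $r=o(d^{50}/\log^{50}d)$ forces $d\gg r^{0.02}\log r$, which both guarantees $M\ge r$ and makes the correction $r\log r=o(dr^{0.98})$. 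Therefore $\log|\Psi|=\Omega(dr^{0.98})$ for all sufficiently large $d,r$, as required.

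\textbf{Main obstacle.} The only genuinely delicate point is the choice of $\rho$: it must be small enough that $\rho^\ell\le 1/(4r)$, since otherwise rank-one tensors from distinct directions are insufficiently separated once $r$ of them are summed and the off-diagonal error $\Theta(r^2\rho^\ell)$ swamps the diagonal term $\Theta(r)$; yet it must be large enough that the near-orthogonal family $\mathcal{V}$ still has size $M=e^{\Theta(d\rho^2)}$ big enough for $\binom Mr$ to reach $e^{\Omega(dr^{0.98})}$. Reconciling these two demands is exactly what pins down the hypotheses $\ell\ge 101$ (so that $2/\ell<0.02$, leaving $1-2/\ell$ comfortably above $0.98$) and the polynomial ceiling $r=o(d^{50}/\log^{50}d)$ on the rank (so the $\log r$ losses and the requirement $M\ge r$ are absorbed). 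Everything else --- the Hoeffding/union-bound step, the cancellation and term-counting in $\|\bT_S-\bT_{S'}\|_F^2$, and the asymptotic bookkeeping --- is routine.
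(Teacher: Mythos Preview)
Your proposal is correct and follows essentially the same construction as the paper: build a large family of nearly orthogonal scaled sign vectors via a probabilistic (Hoeffding/Bernstein plus union bound) argument, take $\Psi$ to be all sums of $r$ of the associated rank-one tensors, and expand $\|\bT_S-\bT_{S'}\|_F^2$ using $\langle \boldsymbol{v}^{\otimes\ell},\boldsymbol{w}^{\otimes\ell}\rangle=\langle\boldsymbol{v},\boldsymbol{w}\rangle^\ell$. The only difference is cosmetic: the paper fixes the correlation threshold at $r^{-0.01}$ (so $\ell\ge 101$ is what makes the \emph{separation} work, via $r^{-0.01\ell}\le r^{-1.01}$), whereas you take $\rho=(4r)^{-1/\ell}$ (so separation is automatic and $\ell\ge 101$ is what makes the \emph{cardinality} bound reach $dr^{0.98}$, via $1-2/\ell>0.98$); the resulting bounds and the role of the hypothesis $r=o(d^{50}/\log^{50}d)$ are identical.
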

Proposition~\ref{prop:TENSOR_PACKING} is essentially a packing numbers bound for symmetric tensors with low symmetric rank. Its proof is based on a variant of Gilbert-Varshamov lemma~\cite{gilbert1952comparison,varshamov1957estimate} from coding theory, which we derive  using the probabilistic method~\cite{alon2016probabilistic}. See Section~\ref{sec:Pf-Packing} for details.

By leveraging Proposition~\ref{prop:TENSOR_PACKING}, we derive the following sample complexity lower bound for estimating $\bT^*$.
\begin{theorem}\label{thm:FANO-CONV}
Suppose the assumptions on $\ell,d$ and $r$ from Proposition~\ref{prop:TENSOR_PACKING} hold. Let $B>0$ be a large constant, possibly depending on $d,r$. Assume that $\bT^*$ is drawn uniformly from  $\Psi$ in Proposition~\ref{prop:TENSOR_PACKING} and $\bX_i=(\bX_i(j):j\in[d])$, $i\in [N]$ are i.i.d.\,random vectors such that for each $i$, the entries $\bX_i(j)$ are i.i.d.\,samples from an arbitrary distribution on $[-B,B]\cap \mathbb{Z}$. Furthermore, suppose that $\bT^*$ and $\bX_i$ are independent. Consider the data $(Y_i,\bX_i)$ generated according to $Y_i = \ip{\bT^*}{\bX_i^{\otimes \ell}}$, $i\in[N]$. Then, for any $\delta>0$, 
    \[
\inf_{\widehat{\bT}}\,\mathbb{P}\bigl[\widehat{\bT}\ne \bT^*\bigr]\ge \delta \quad\text{if}\quad N = O\left(\frac{dr^{0.98}}{\log r+\ell \log(Bd)}\right),
    \]
    where the infimum is taken over all estimators $\widehat{\bT}$ of $\bT^*$ based on the data $(Y_i,\bX_i),i\in[N]$. %$:=\widehat{\bT}((Y_i,\bX_i):i\in [N])$ for $\bT^*$. 
   % if $N=O\left(\frac{\right)$
\end{theorem}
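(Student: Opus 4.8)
The plan is to apply Fano's inequality with the uniform prior on the packing set $\Psi$ furnished by Proposition~\ref{prop:TENSOR_PACKING}, and the crux is to bound the mutual information between $\bT^*$ and the observed data by using the fact that each response $Y_i$ takes values in a \emph{finite}, explicitly bounded set. Concretely, since $\bT^*$ is uniform on $\Psi$ with $|\Psi|\ge e^{\Omega(dr^{0.98})}$, and $\widehat{\bT}$ is an arbitrary function of the data, the Markov chain $\bT^*\to(Y_i,\bX_i)_{i\in[N]}\to\widehat{\bT}$ together with the data‑processing inequality and the discrete Fano inequality give
\[
\inf_{\widehat{\bT}}\,\mathbb{P}\bigl[\widehat{\bT}\ne\bT^*\bigr]\ \ge\ 1-\frac{I\bigl(\bT^*;(Y_i,\bX_i)_{i\in[N]}\bigr)+\log 2}{\log|\Psi|}.
\]
Hence it suffices to show $I\bigl(\bT^*;(Y_i,\bX_i)_{i\in[N]}\bigr)=O\bigl(N(\log r+\ell\log(Bd))\bigr)$; combined with $\log|\Psi|=\Omega(dr^{0.98})$, requiring the right‑hand side above to be at least $\delta$ then forces exactly $N=O\bigl(dr^{0.98}/(\log r+\ell\log(Bd))\bigr)$, with the implied constant depending on $\delta$. (The separation $\|\bT-\bT'\|_F\ge 1$ for distinct $\bT,\bT'\in\Psi$ is not needed for exact recovery, but it certifies that $\Psi$ is a genuine packing.)

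Next I would bound the mutual information by a chain‑rule argument exploiting that the designs are independent of the signal. Since $\bX_i\perp\bT^*$, we have $I\bigl(\bT^*;(Y_i,\bX_i)_i\bigr)=I\bigl(\bT^*;(\bX_i)_i\bigr)+I\bigl(\bT^*;(Y_i)_i\mid(\bX_i)_i\bigr)=I\bigl(\bT^*;(Y_i)_i\mid(\bX_i)_i\bigr)$, and because $Y_i=\langle\bT^*,\bX_i^{\otimes\ell}\rangle$ is a deterministic function of $(\bT^*,\bX_i)$ we get $H\bigl((Y_i)_i\mid(\bX_i)_i,\bT^*\bigr)=0$, whence
\[
I\bigl(\bT^*;(Y_i)_i\mid(\bX_i)_i\bigr)=H\bigl((Y_i)_i\mid(\bX_i)_i\bigr)\le\sum_{i\in[N]}H(Y_i).
\]
(All entropies are discrete, since $\bT^*$ ranges over the finite set $\Psi$ and each $\bX_i$ has entries in $[-B,B]\cap\mathbb Z$.) It remains to bound $H(Y_i)$, and here the structural guarantees of Proposition~\ref{prop:TENSOR_PACKING} do the work: $d^{\ell/2}\bT^*$ has integer entries and $\|\bT^*\|_F\le r$. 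Since $\bX_i$ has integer entries bounded by $B$, the quantity $d^{\ell/2}Y_i=\sum_{i_1,\dots,i_\ell}\bigl(d^{\ell/2}\bT^*_{i_1,\dots,i_\ell}\bigr)\bX_i(i_1)\cdots\bX_i(i_\ell)$ is an integer, while Cauchy–Schwarz gives $|Y_i|\le\|\bT^*\|_F\,\|\bX_i\|_2^{\ell}\le r(\sqrt d\,B)^{\ell}=rB^{\ell}d^{\ell/2}$. Thus $d^{\ell/2}Y_i$ is an integer lying in an interval of length $2rB^{\ell}d^{\ell}$, so $Y_i$ takes at most $2rB^{\ell}d^{\ell}+1$ values and $H(Y_i)\le\log\bigl(2rB^{\ell}d^{\ell}+1\bigr)=O\bigl(\log r+\ell\log(Bd)\bigr)$. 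Summing over $i\in[N]$ and substituting into the Fano bound completes the argument.

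The computation is routine once the right reductions are in place; the only genuinely load‑bearing observation is that $Y_i$, despite looking like a continuous quantity, actually lives on a finite alphabet whose cardinality is controlled by the norm bound $\|\bT^*\|_F\le r$ and the $d^{\ell/2}$‑integrality of $\bT^*$ and of $\bX_i$. Without integrality, $H(Y_i)$ could be infinite and this route would collapse, so the main "obstacle" — namely producing a packing set that is simultaneously exponentially large, well separated, norm‑bounded, and integral at scale $d^{\ell/2}$ — has already been dealt with in Proposition~\ref{prop:TENSOR_PACKING}; the present proof simply harvests those properties.
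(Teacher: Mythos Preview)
Your proposal is correct and follows essentially the same route as the paper: both apply Fano's inequality with the uniform prior on $\Psi$, reduce to bounding $H\bigl((Y_i)_i\mid(\bX_i)_i\bigr)$ (equivalently the mutual information) via the chain rule and the deterministic relation $Y_i=\langle\bT^*,\bX_i^{\otimes\ell}\rangle$, and then control $H(Y_i)$ using the $d^{\ell/2}$-integrality of $\bT^*$ together with the Cauchy--Schwarz bound $|Y_i|\le r(B\sqrt d)^\ell$ to show each $Y_i$ lives on an alphabet of size $O(r(Bd)^\ell)$. The only cosmetic difference is that you phrase Fano in its mutual-information form whereas the paper uses the conditional-entropy form, but these are equivalent.
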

See Section~\ref{pf:FANO} for the proof. Several remarks are in order.

Theorem~\ref{thm:FANO-CONV} applies to \emph{any} estimator $\widehat{\bT}$, whether deterministic or randomized. When $N=\widetilde{O}(dr^{0.98})$, every estimator incurs an estimation error of at least $\delta$, bounded away from zero. The specific choice of $0.98$ is arbitrary, a similar bound $N=\widetilde{O}(dr^{1-\gamma})$ holds for any $\gamma>0$ and sufficiently large $\ell$. This suggests that Theorem~\ref{thm:MAIN} is essentially tight up to $r^{o(1)}$ factors. 

We focus on the discrete setting here for simplicity, as the use of discrete variables enables cleaner statements and entropic arguments. We plan to extend our analysis to the continuous case in future work.

\subsection{Comparison with the Prior Work on Low-Rank Tensor/Matrix Recovery}
We now position our work within the broader context of low-rank matrix and tensor recovery and provide a brief comparison.

Low-rank models have been extensively studied, particularly within the framework of compressive sensing~\cite{candes2005decoding,candes2006robust,cai2010singular}. Our approach parallels the work~\cite{eldar2012uniqueness}, which focuses on recovering low-rank matrices,  and~\cite{mu2014square}, which addresses the recovery of low \emph{Tucker rank} tensors. Importantly though, both works adopt a measurement model in which $\mathcal{X}_i$ in~\eqref{eq:Measurement} consists of i.i.d.\,standard normal entries, simplifying probabilistic calculations. In contrast, our setting deals with symmetric, rank-one, log-concave measurements, which neither of these prior models cover. Moreover, their frameworks do not extend to two-layer polynomial networks as discussed earlier. Other related works~\cite{rauhut2017low,cai2020provable,ahmed2020tensor,grotheer2022iterative,luo2023low} study measurement models where $\mathcal{X}_i$ consists of i.i.d.\,sub-Gaussian entries, or, more generally, satisfies a certain tensor restricted isometry property (RIP). Whether symmetric, rank-one measurements in our setting satisfy tensor RIP remains an open question. (We note that while some of these works develop computationally efficient methods, our focus is solely on sample complexity.) 

Another line of research, dubbed as tensor completion, considers binary measurements where only a random subset of entries of a low-rank tensor is observed, see, e.g., ~\cite{ghadermarzy2018learning,yuan2017incoherent}. 

A particularly relevant work is~\cite{hao2020sparse}, which studies cubic sketching (the case $\ell=3$) and provides minimax-optimal convergence guarantees for the gradient descent.  Their analysis focuses on $\ell=3$ and a different setting where the vectors $\boldsymbol{v}_i\in\R^d$ in the decomposition of $\bT^*$ (see~\eqref{eq:SymRank}) are $s$-sparse, leading to a $\widetilde{\Omega}(rs\log d)$ sample bound.  Similarly,~\cite{shah2015optimal} considers separable measurements of the form $\mathcal{X}_i = a_i \otimes A_i$, where $a_i$ is a random vector and $A_i$ is a random matrix, obtaining a sample complexity bound of $\widetilde{\Omega}(dr)$.
\subsection{Comparison with Sample Complexity Bounds for Neural Networks}\label{sec:NN-implication} 
Sample complexity bounds for neural networks have been extensively studied in the literature; see, e.g., \cite{du2018power,vardi2022sample} and references therein. Focusing on two-layer polynomial networks discussed in  Section~\ref{sec:NNs}, Theorems~\ref{thm:MAIN} and~\ref{coro:NN} establish that $\Omega(dr)$ samples suffice to uniquely identify the tensor representation $\bT^*$ in~\eqref{eq:NN--TENSOR} of the underlying network (setting aside the tensor decomposition issue discussed in Remark~\ref{remark:tractability}). 

We now compare this with the existing bounds in the literature, particularly with the recent work~\cite{vardi2022sample}. Consider two-layer polynomial networks with activation $\sigma(z)=z^\ell$ and weight constraints $\|\boldsymbol{a}^*\|\le b$ and $\|\boldsymbol{W}^*\| \le B$, where $\boldsymbol{a}^* = (a_1^*,\dots,a_r^*)$ and $\boldsymbol{W}^* \in\R^{r\times d}$ with rows $\boldsymbol{W}_j^*\in\R^d$. Using a Rademacher complexity-based approach, \cite[Theorem~5]{vardi2022sample} establish a sample complexity bound of order $\widetilde{\Omega}(b^2 \cdot B^{2\ell}\cdot b_x^{2\ell})$ for inputs from $\{\boldsymbol{x}\in\R^d: \|\boldsymbol{x}\|_2 \le b_x\}$. 
Notably, if $\bX\sim\mathcal{D}^{\otimes d}$ with a sufficiently regular $\mathcal{D}$, then standard concentration arguments yield $\|\bX\| = \widetilde{O}(\sqrt{d})$ (w.h.p.). Consequently, the sample complexity bound from~\cite{vardi2022sample} for inputs drawn from $\mathcal{D}^{\otimes d}$ is of order $\widetilde{\Omega}\bigl(b^2 \cdot B^{2\ell}\cdot d^\ell\bigr)$. 

Let $N_s^* = \Omega(dr)$ be the sample bound in Theorems~\ref{thm:MAIN} and~\ref{coro:NN}. In the \emph{underparameterized} regime, specifically when $r=O(d^{\ell-1})$, we obtain $N_s^* < \widetilde{\Omega}\bigl(b^2\cdot B^{2\ell}\cdot d^\ell\bigr)$. We emphasize that this regime is relatively unexplored compared to its overparameterized counterpart, which has been extensively studied (see, e.g., \cite{bartlett2021deep}). Our approach demonstrates that low-rank tensors can offer valuable insights into the underparameterized setting.

In the \emph{overparameterized} regime, $r=\Omega(d^{\ell-1})$, our bound $N_s^*$ still remains competitive, particularly if the $a_i^*$ are of constant order (so that $\|\boldsymbol{a}\|=\Theta(\sqrt{r})$) or if the spectral norm $\|\boldsymbol{W}\|$ grows polynomially with $\max\{r,d\}$. %

Much of the prior literature on studying sample complexity of neural networks (such as~\cite{du2018power,golowich2020size,neyshabur2015norm}) relies on Rademacher complexity-based approaches. While these approaches avoid the dependence on network size (which, in some cases, may scale poorly), it does so at the expense of imposing constraints on the norm of underlying weights. In contrast, our tensor-based approach imposes no such restrictions, suggesting that tensor-based approaches may provide further insights into the theory of neural networks.
\subsection{Information-Theoretic Bounds and Noiseless Models}\label{sec:IT-BDS}
We now highlight the significance of information-theoretic bounds and noiseless models, as well as outline future directions.
\paragraph{Information-Theoretic Results} Information-theoretic guarantees such as ours serve as a foundational step towards computationally efficient methods. A vast body of literature in high-dimensional statistics focuses exclusively on such guarantees~\cite{wainwright2009information,wang2010information,tan2011rank,eldar2012uniqueness,zhang2013information,banks2018information,xu2018minimal}, as they offer benchmarks for polynomial-time algorithms. In our context, Theorems~\ref{thm:MAIN} and~\ref{thm:FANO-CONV} suggest that an algorithm requiring $\Theta(dr)$ measurements is sample-optimal, whereas one needing significantly more than $dr$ samples may be suboptimal and subject to improvement.
\paragraph{Noiseless Models}
Noiseless models like ours, where the linear measurements~\eqref{eq:DATA-COND} are observed exactly, are foundational in the literature on recovering low-rank structures. Numerous works focus solely on such models~\cite{wu2010renyi,keshavan2010matrix,riegler2015information,jalali2017universal,jalali2019toward,abbara2020universality}, as they provide clean theoretical benchmarks for identifying fundamental limits and guiding the development of computationally efficient methods.

\paragraph{Future Directions} Since the estimator~\eqref{eq:Recovery-Program} is computationally intractable, an important future direction is to analyze the sample complexity of computationally efficient methods, such as convex relaxations of~\eqref{eq:Recovery-Program}. Another avenue is extending our analysis to noisy models, where the measurements~\eqref{eq:DATA-COND} are corrupted with additive noise. A particularly relevant setting in this context is \emph{oblivious adversarial models}, which generalize matrix and tensor completion by allowing adversarially perturbed measurements. This framework provides a way to address robustness under structured noise conditions. For related work, see~\cite{candes2011robust,bhatia2017consistent,suggala2019adaptive,d2021consistent1,d2021consistent2,pensia2024robust}.
\section{The Road for Proving Theorems~\ref{thm:MAIN} and~\ref{coro:NN}}\label{sec:Road-to-Proof}
Both Theorem~\ref{thm:MAIN} and Theorem~\ref{coro:NN} are consequences of the following stronger result:
%the following proposition. %showing a stronger result. 
\begin{proposition}\label{Prop:MAIN}
    Let $C>2\ell^2$ be an arbitrary constant. Whenever $N\ge Crd$, we have
    \[
    \mathbb{P}\Bigl[\zeta_S(2r)\cap\bigl\{\bT\in\R^{d\times \cdots \times d}:\langle\bT, \bX_i^{\otimes \ell}\rangle=0,\forall i\in[N]\bigr\} = \{\boldsymbol{0}\}\Bigr]=1,
    \]
    where 
\begin{equation}\label{eq:zeta-set}
    \zeta_S(2r):= \Bigl\{\bT\in\R^{d\times \cdots \times d}:\mathrm{rank}_S(\bT)\le 2r, \|\bT\|_F=1\Bigr\}.
\end{equation}
\end{proposition}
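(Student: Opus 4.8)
The plan is to show that the "bad event"---that there exists a nonzero symmetric tensor $\bT$ of symmetric rank at most $2r$ with $\|\bT\|_F=1$ lying in the kernel of all $N$ measurements---has probability zero, by a covering-numbers (epsilon-net) argument over the set $\zeta_S(2r)$. The key structural fact is that $\zeta_S(2r)$ is a compact subset of a finite-dimensional space (it sits inside the unit sphere of $\mathcal{S}_\ell$, which has dimension $\binom{d+\ell-1}{\ell}$), and moreover it is contained in a \emph{low-dimensional algebraic variety}: every element is $\sum_{i\le 2r}\lambda_i \bfv_i^{\otimes\ell}$, parametrized by $O(rd)$ real parameters. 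So $\zeta_S(2r)$ admits an $\varepsilon$-net of size at most $(K/\varepsilon)^{O(rd)}$ for a suitable constant $K$ (depending on $\ell$ and on a bound for the $\lambda_i,\bfv_i$ that one gets from $\|\bT\|_F=1$, using that the $\bfv_i^{\otimes\ell}$ can be taken to have unit norm and the scalars bounded, after an appropriate normalization of the decomposition). First I would make this covering statement precise, with explicit dependence $\log \mathcal{N}(\zeta_S(2r),\varepsilon) \le c_\ell\, rd\,\log(1/\varepsilon)$ for small $\varepsilon$.

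Next I would set up a single-point anti-concentration estimate: for a \emph{fixed} unit-Frobenius-norm symmetric tensor $\bT$ and a single random vector $\bfX\sim\mathcal{D}^{\otimes d}$, the scalar $\langle \bT,\bfX^{\otimes\ell}\rangle$ is a (possibly inhomogeneous, but here homogeneous of degree $\ell$) polynomial $p(\bfX)$ in the i.i.d.\ log-concave coordinates of $\bfX$, and it is not identically zero. The Carbery--Wright inequality then gives a bound of the form $\mathbb{P}[|p(\bfX)|\le t] \le C_\ell\, (t/\|p\|_{L_2})^{1/\ell}$, and I would need a lower bound on $\|p\|_{L_2} = (\mathbb{E}\langle\bT,\bfX^{\otimes\ell}\rangle^2)^{1/2}$ that is uniform over all unit-norm symmetric $\bT$. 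This is where orthogonal polynomials (with respect to $\mathcal{D}$) come in: expanding $\bfX^{\otimes\ell}$ in a tensorized orthogonal-polynomial basis and using symmetry, one shows $\mathbb{E}\langle\bT,\bfX^{\otimes\ell}\rangle^2 \ge \kappa_\ell \|\bT\|_F^2 = \kappa_\ell$ for some constant $\kappa_\ell>0$ depending only on $\ell$ and $\mathcal{D}$. Combining, I get that for any fixed unit tensor, $\mathbb{P}[|\langle\bT,\bfX^{\otimes\ell}\rangle|\le t]\le C_\ell' t^{1/\ell}$, and hence by independence $\mathbb{P}[|\langle\bT,\bfX_i^{\otimes\ell}\rangle|\le t,\ \forall i\le N] \le (C_\ell' t^{1/\ell})^N$.

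Then I would run the standard net-plus-union-bound-plus-continuity argument, but to land on \emph{probability exactly one} (not just high probability) I would use a sequence $t=t_m\downarrow 0$. Fix a small $\varepsilon>0$ and let $\mathcal{N}_\varepsilon$ be an $\varepsilon$-net of $\zeta_S(2r)$. The map $\bT\mapsto \langle\bT,\bfX_i^{\otimes\ell}\rangle$ is Lipschitz on $\zeta_S(2r)$ with constant $L_i = \|\bfX_i^{\otimes\ell}\|_F = \|\bfX_i\|_2^\ell$, which is a.s.\ finite; so if some $\bT\in\zeta_S(2r)$ satisfies all $N$ equations $\langle\bT,\bfX_i^{\otimes\ell}\rangle=0$, then its net-point $\bT'$ satisfies $|\langle\bT',\bfX_i^{\otimes\ell}\rangle|\le \varepsilon L_i$ for all $i$. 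Taking a union over the (finitely many) net points and over a growing truncation $L_i \le \Lambda$, and sending first $\varepsilon\to 0$, the event probability is bounded by $|\mathcal{N}_\varepsilon|\cdot (C_\ell' (\varepsilon\Lambda^\ell)^{1/\ell})^N = \exp(c_\ell rd\log(1/\varepsilon))\cdot \exp(\tfrac{N}{\ell}\log(C_\ell'')\, + N\log(\varepsilon^{1/\ell}\Lambda))$; since $N\ge Crd$ with $C>2\ell^2$, the exponent behaves like $(\tfrac{N}{\ell} - c_\ell rd)\log(1/\varepsilon) \to -\infty$ as $\varepsilon\to0$, because $\tfrac{N}{\ell}\ge \tfrac{C}{\ell} rd > 2\ell\, rd$ dominates $c_\ell rd$ (here $c_\ell$ is the covering exponent, which one checks is $\le 2\ell$-ish; this is exactly where the constant $C>2\ell^2$ is calibrated). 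Hence the probability is $0$; taking a countable union over $\Lambda\in\mathbb{N}$ keeps it $0$.

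The main obstacle I anticipate is the uniform lower bound $\mathbb{E}_{\bfX}[\langle\bT,\bfX^{\otimes\ell}\rangle^2] \ge \kappa_\ell \|\bT\|_F^2$ over all symmetric $\bT$: one must be careful because, unlike the Gaussian case, the monomial moments of $\mathcal{D}$ are not orthogonal, so a naive term-by-term comparison fails, and the tensor $\bT^{\mathrm{sym}}$'s Frobenius norm counts repeated-index entries with multiplicity. The fix is to move to the orthonormal polynomial basis $\{q_k\}$ of $L_2(\mathcal{D})$ and expand $\prod_j \bfX(i_j)$ in products $\prod q_{k_j}$; the diagonal (top-degree) contribution is $\prod$ of leading coefficients and is bounded below, and crucially different multi-indices (as multisets) give orthogonal contributions, so one recovers a clean lower bound with an $\ell$-dependent but $d$-independent constant. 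A secondary subtlety is verifying that the relevant polynomial $\langle\bT,\bfX^{\otimes\ell}\rangle$ is genuinely non-degenerate (not a.s.\ constant) so Carbery--Wright applies with the $L_2$-normalization --- but that is immediate once the variance lower bound is in hand. Everything else (compactness, the covering count via the explicit $O(rd)$-parameter decomposition, Lipschitz continuity, the countable-union trick to upgrade to probability one) is routine.
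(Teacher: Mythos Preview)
Your proposal is correct and follows essentially the same approach as the paper: an $\varepsilon$-net over $\zeta_S(2r)$ with covering exponent $O(\ell r d)\log(1/\varepsilon)$, Carbery--Wright anti-concentration combined with a second-moment lower bound via orthogonal polynomials for $\mathcal{D}$, a union bound, and then sending $\varepsilon\to 0$ (with a truncation on $\|\bX_i\|$) to get probability exactly one. Two minor remarks: the paper obtains the covering bound by passing through CP rank and invoking a result of Zhang (2023) rather than parametrizing directly, and its stated variance lower bound is $\Xi=\mathcal{C}(\mathcal{D},\ell)^d$ (so $d$-dependent), though your sharper $d$-independent claim is also attainable since at most $\ell$ coordinates of $\boldsymbol{\alpha}$ are nonzero---either way the constant is $O_\varepsilon(1)$ and irrelevant once $\varepsilon\to 0$.
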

We begin by showing why Proposition~\ref{Prop:MAIN} implies Theorems~\ref{thm:MAIN} and~\ref{coro:NN}.
Suppose that~\eqref{eq:Recovery-Program} has a solution different from $\bT^*$, and take any such $\widehat{\bT}\ne \bT^*$. Since $\bT^*$ automatically satisfies the constraints in~\eqref{eq:Recovery-Program} due to~\eqref{eq:DATA-COND} and $\mathrm{rank}_S(\bT^*)\le r$,  the optimal value of the objective function in~\eqref{eq:Recovery-Program} is at most $r$. Thus, $\mathrm{rank}_S(\widehat{\bT})\le r$. Since $\widehat{\bT}-\bT^*\ne \boldsymbol{0}$, we can define
\[
\bT_\Delta = \frac{\widehat{\bT}-\bT^*}{\|\widehat{\bT}-\bT^*\|_F},
\]
where $\|\bT_\Delta\|_F=1$. Furthermore, since $\mathrm{rank}_S$ is clearly subadditive, we have
\[
\mathrm{rank}_S(\bT_\Delta) = \mathrm{rank}_S(\widehat{\bT}-\bT^*) \le \mathrm{rank}_S(\widehat{\bT}) + \mathrm{rank}_S(\bT^*)\le 2r.
\]
Therefore, $\bT_\Delta\in \zeta_S(2r)$ for $\zeta_S(2r)$ arising in Proposition~\ref{Prop:MAIN}. Since $\widehat{\bT}$ is a solution of~\eqref{eq:Recovery-Program} we have $\langle \widehat{\bT},\bX_i^{\otimes \ell}\rangle = \langle \bT^*,\bX_i^{\otimes \ell}\rangle$; this yields  $\langle \bT_\Delta,\bX_i^{\otimes \ell}\rangle = 0$ for all $i\in[N]$. Consequently, if $\widehat{\bT}\ne \bT^*$ is a solution of~\eqref{eq:Recovery-Program}, we obtain
\[
\zeta_S(2r)\cap\bigl\{\bT\in\R^{d\times \cdots \times d}:\langle\bT, \bX_i^{\otimes \ell}\rangle=0,\forall i\in[N]\bigr\}\supseteq \{\boldsymbol{0},\bT_\Delta\}.
\]
Thus, Proposition~\ref{Prop:MAIN} implies Theorem~\ref{thm:MAIN}. Likewise, to see why Proposition~\ref{Prop:MAIN} implies Theorem~\ref{coro:NN}, suppose there exists an $f\in S_r$ such that $f\ne \boldsymbol{f}^*$ and $Y_i = f(\bX_i),\forall i\in[N]$. Writing $f(x) = \sum_{j\le r}a_j \sigma(\ip{W_j}{x})$ and $\bT = \sum_{j\le r}a_j W_j^{\otimes \ell}$ with $\bT\ne \bT^*$ for $\bT^*$ defined in~\eqref{eq:NN--TENSOR} and arguing analogously, we arrive at a contradiction to Proposition~\ref{Prop:MAIN}.

It thus suffices to prove Proposition~\ref{Prop:MAIN}. Building upon~\cite{eldar2012uniqueness,mu2014square} our approach is based on a \emph{covering number} argument. We say $\zeta'$ is an $\epsilon$-net for $\zeta_S(2r)$ if for any $\bT\in\zeta_S(2r)$, there exists a $\widehat{\bT}$ such that $\|\bT-\widehat{\bT}\|_F\le \epsilon$. The size of the smallest such $\epsilon$-net is known as the covering number of $\zeta_S(2r)$. For formal statements, see Definition~\ref{def:eps-net} and Section~\ref{SEC:COV}. We bound the covering numbers of $\zeta_S(2r)$ as follows. We identify in~\eqref{eq:Zeta-CP} a set of tensors, $\zeta_{\mathrm{CP}}(2r)\supset \zeta_S(2r)$ whose covering numbers is bounded by a recent result of~\cite{zhang2023covering} (reproduced in Theorem~\ref{thm:KZ23}). We then rely on a monotonicity property of covering numbers~\cite[Exercise~4.2.10]{vershynin2018high}: covering numbers of $\zeta_S(2r)$ is bounded above by that of $\zeta_{\mathrm{CP}}(2r)$.

Our approach requires controlling certain probabilistic terms. Since the data distributions in~\cite{eldar2012uniqueness} and~\cite{mu2014square} differ significantly from our setting, additional technical steps are necessary. Specifically, in their models, the measurement tensors $\mathcal{X}_j=(\mathcal{X}_j(i_1,\dots,i_\ell):i_1,\dots,i_\ell\in[d])$ consist of i.i.d.\,standard normal entries: $\mathcal{X}_i(i_1,\dots,i_\ell)\sim \cN(0,1)$ independently for each entry $i_1,\dots,i_\ell\in[d]$ and $j\in[N]$. Thus, the probability calculations are rather straightforward. 

In contrast, our measurement model, as defined in~\eqref{eq:DATA-COND}, involves random variables of form $\bX(i_1)\cdots \bX(i_\ell)$, $1\le i_1,\dots,i_\ell\le d$, where $\bX=(\bX(i):i\in[d]) \sim \mathcal{D}^{\otimes d}$ for an arbitrary log-concave distribution $\mathcal{D}$. These terms are degree-$\ell$ multivariate polynomials in the entries of $\bX$ and exhibit non-trivial dependencies. To address this challenge, we employ Carbery-Wright inequality~\cite{carbery2001distributional}, a powerful tool for studying the anti-concentration properties of low-degree polynomials of log-concave random variables, reproduced in Theorem~\ref{thm:CW}. Applying Carbery-Wright inequality, however, requires controlling the second moment of the polynomial. We achieve this by using orthogonal polynomial expansions w.r.t.\,the product measure $\mathcal{D}^{\otimes d}$. 

We provide a formal definition and useful properties of orthogonal polynomials in Section~\ref{SEC:Orthogonal}, following Steven Lalley's notes~\cite{LalleyNotes}. For further details, see~\cite{szego1939orthogonal}; see also~\cite{kunisky2019notes} for more on orthogonal polynomial expansions in statistical contexts, particularly regarding connections to computational hardness.
\section{Proof of Proposition~\ref{Prop:MAIN}}\label{sec:PF_PROP}
In this section, we present a proof of Proposition~\ref{Prop:MAIN}. To keep our exposition self-contained, we provide formal definitions whenever necessary.
\subsection{Covering Numbers for $\zeta_S(2r)$}\label{SEC:COV}
Our approach is based on a covering numbers argument for $\zeta_{\mathrm{S}}(2r)$, formally defined as follows.
\begin{definition}\label{def:eps-net}
    Let $(X,d)$ be a metric space and $\epsilon>0$. A subset $S_\epsilon\subseteq X$ is called an $\epsilon$-net for $X$ if for every $x\in X$, there exists $x'\in S_\epsilon$ such that $d(x,x')\le \epsilon$. The smallest size of an $\epsilon$-net (for $X$) is called the covering number of $X$, denoted by $\mathcal{N}(X,d,\epsilon)$.
\end{definition}
For more on covering numbers, see~\cite{vershynin2010introduction,vershynin2018high}. 

It appears challenging to bound $\mathcal{N}\bigl(\zeta_S(2r),\|\cdot\|_F,\epsilon\bigr)$ directly.  Instead, we bound the covering number of a related set of tensors that contains $\zeta_S(2r)$, and leverage a monotonicity property of covering numbers. To that end, we define the notion of CANDECOMP/PARAFAC (CP) rank~\cite{kolda2009tensor}.
\begin{definition}\label{def:CP-rank}
For an order-$\ell$ tensor $\bT\in\R^{d\times \cdots \times d}$, define its CP rank by
\[
\mathrm{rank}_{\mathrm{CP}}(\bT) = \min\left\{r\ge 1:\bT = \sum_{1\le j\le r}\boldsymbol{v}_1^{(j)}\otimes \cdots \otimes \boldsymbol{v}_\ell^{(j)}, \boldsymbol{v}_i^{(j)}\in\R^d,i\in[\ell],j\in[r]\right\}.
\]
Furthermore, let
\begin{equation}\label{eq:Zeta-CP}
    \zeta_{\mathrm{CP}}(r) = \Bigl\{\bT\in\R^{d\times \cdots \times d}:\mathrm{rank}_{\mathrm{CP}}(\bT)\le r, \|\bT\|_F=1\Bigr\}.
\end{equation}
\end{definition}
In light of~\eqref{eq:SymRank} and Definition~\ref{def:CP-rank}, it follows that $\mathrm{rank}_{\mathrm{CP}}(\bT)\le \mathrm{rank}_S(\bT)$ for any tensor $\bT$. Consequently, $ \zeta_{\mathrm{S}}(2r)\subset  \zeta_{\mathrm{CP}}(2r)$. 

We next record a useful monotonicity property, which allows us to bound the covering number of $\zeta_S(2r)$ by that of $\zeta_{\mathrm{CP}}(2r)$.
    \begin{lemma}{\cite[Exercise~4.2.10]{vershynin2018high}}\label{lemma:COV_Mono}
    For any $\epsilon>0$, 
    \[
    L\subset K \implies\mathcal{N}(L,d,\epsilon)\le \mathcal{N}(K,d,\epsilon/2).
    \]
\end{lemma}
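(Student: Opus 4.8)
The plan is to prove Lemma~\ref{lemma:COV_Mono} directly from the definition of a covering number (Definition~\ref{def:eps-net}), by taking an optimal $(\epsilon/2)$-net of the larger set $K$ and using the triangle inequality to manufacture an $\epsilon$-net of the smaller set $L$ of no greater size. This is the standard argument behind \cite[Exercise~4.2.10]{vershynin2018high}, and since the lemma is already attributed to that reference, a short self-contained proof suffices.

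\begin{proof}
If $\mathcal{N}(K,d,\epsilon/2)=\infty$ the claim is trivial, so assume it is finite and let $\{y_1,\dots,y_m\}\subseteq K$ be an $(\epsilon/2)$-net for $K$ of minimal size $m=\mathcal{N}(K,d,\epsilon/2)$. For each index $j\in[m]$ for which the ball $B(y_j,\epsilon/2)$ meets $L$, pick one point $x_j\in L$ with $d(x_j,y_j)\le \epsilon/2$; discard the remaining indices. Let $S$ be the (at most $m$) points $x_j$ so chosen. I claim $S$ is an $\epsilon$-net for $L$. Indeed, fix any $x\in L\subseteq K$. Since $\{y_1,\dots,y_m\}$ is an $(\epsilon/2)$-net for $K$, there is some $j$ with $d(x,y_j)\le \epsilon/2$; in particular $B(y_j,\epsilon/2)\cap L\ni x$ is nonempty, so the point $x_j\in S$ was defined and satisfies $d(x_j,y_j)\le \epsilon/2$. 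By the triangle inequality,
\[
d(x,x_j)\le d(x,y_j)+d(y_j,x_j)\le \frac{\epsilon}{2}+\frac{\epsilon}{2}=\epsilon.
\]
Hence every $x\in L$ is within $\epsilon$ of a point of $S$, so $S$ is an $\epsilon$-net for $L$, and therefore $\mathcal{N}(L,d,\epsilon)\le |S|\le m=\mathcal{N}(K,d,\epsilon/2)$.
\end{proof}

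There is essentially no obstacle here; the only point requiring any care is that the constructed net $S$ must be a subset of the space in which $L$ lives (it is, since each $x_j\in L$) and that it genuinely covers all of $L$, which the triangle-inequality step guarantees. The centering trick of replacing the ambient net points $y_j$ by nearby points $x_j$ of $L$ — at the cost of doubling the radius — is the reason the bound involves $\epsilon/2$ rather than $\epsilon$. If one instead allows the net of $L$ to consist of points of $K$ (an external net), the cleaner inequality $\mathcal{N}(L,d,\epsilon)\le\mathcal{N}(K,d,\epsilon)$ holds, but the stated form with proper (internal) nets is what is used later, so we keep it as above.
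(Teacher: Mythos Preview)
Your proof is correct and is precisely the standard argument behind \cite[Exercise~4.2.10]{vershynin2018high}. The paper itself does not supply a proof of Lemma~\ref{lemma:COV_Mono}; it simply quotes the result from Vershynin, so there is no alternative argument to compare against.
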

Therefore, it suffices to control $\mathcal{N}\bigl(\zeta_{\mathrm{CP}}(r),\|\cdot\|_F,\epsilon\bigr)$. Such a bound on the covering numbers of tensors with low CP rank has only recently appeared in the literature and is provided below.
\begin{theorem}{\cite[Theorem~3.1]{zhang2023covering}}\label{thm:KZ23}
    Let $C>0$ be an absolute constant. For $d\ge 2$, $r>0$ and $\epsilon\in(0,2]$,
    \[
\log\mathcal{N}\Bigl(\zeta_{\mathrm{CP}}(r),\|\cdot\|_F,\epsilon\Bigr)\le r\ell d \log \frac1\epsilon + Cr\ell^2 d \log d.
    \]
\end{theorem}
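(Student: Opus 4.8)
The plan is to construct an explicit $\epsilon$-net for $\zeta_{\mathrm{CP}}(r)$ out of the CP parametrization map $\Phi\colon(\R^d)^{r\ell}\to\R^{d\times\cdots\times d}$, $\Phi\bigl(\{\boldsymbol v_i^{(j)}\}\bigr)=\sum_{j\le r}\boldsymbol v_1^{(j)}\otimes\cdots\otimes\boldsymbol v_\ell^{(j)}$, after arguing that the relevant portion of the parameter space may be taken of diameter $\mathrm{poly}(d)$. Granting this, the two terms of the bound are essentially forced: $\Phi$ has $r\ell d$ real inputs, so a $\delta$-net of a $\mathrm{poly}(d)$-box in parameter space has size $(\mathrm{poly}(d)/\delta)^{r\ell d}$, and since $\Phi$ is $\mathrm{poly}(d)$-Lipschitz on such a box one takes $\delta=\epsilon/\mathrm{poly}(d)$, giving $\log\mathcal N\le r\ell d\log(1/\epsilon)+r\ell d\cdot O(\log\mathrm{poly}(d))$; bookkeeping the exponents of the polynomials turns the last term into $Cr\ell^2 d\log d$. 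A harmless preliminary reduction removes large $r$: if $r\ge d^{\ell-1}$ then \emph{every} unit-norm tensor has CP rank at most $r$ and the claimed bound already exceeds $\log\mathcal N(S^{d^\ell-1},\|\cdot\|_2,\epsilon)$, so we may assume $r<d^{\ell-1}$, and hence $\log r=O(\ell\log d)$.

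Fix $\bT\in\zeta_{\mathrm{CP}}(r)$ with a CP decomposition into at most $r$ rank-one summands; using the scaling freedom I would normalize $\|\boldsymbol v_1^{(j)}\|=\cdots=\|\boldsymbol v_{\ell-1}^{(j)}\|=1$ for every $j$ and absorb the magnitudes into $\boldsymbol z^{(j)}:=\boldsymbol v_\ell^{(j)}$. Writing $Z=[\boldsymbol z^{(1)}\mid\cdots\mid\boldsymbol z^{(r)}]$ and letting $K\in\R^{d^{\ell-1}\times r}$ be the Khatri--Rao product of the first $\ell-1$ factor matrices (its columns are unit vectors), the mode-$\ell$ flattening of $\bT$ equals $ZK^{\top}$, so $\|ZK^{\top}\|_F=1$ and $\|Z\|_F\le\sigma_{\min}(K)^{-1}$. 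The crux --- and the step I expect to be the main obstacle --- is that $Z$ need not be bounded: the CP decomposition of a fixed norm-one tensor can be arbitrarily ill-conditioned ($\sigma_{\min}(K)$ arbitrarily small), which is the border-rank / ill-posedness phenomenon already visible for $\ell=3$, $d=2$, so no $\mathrm{poly}(d)$ bound on the $\|\boldsymbol z^{(j)}\|$ holds uniformly over $\zeta_{\mathrm{CP}}(r)$. To get past this I would split on a threshold $\tau=d^{-c\ell}$. On the event $\sigma_{\min}(K)\ge\tau$ one has $\|Z\|_F\le d^{c\ell}$; netting the $r(\ell-1)$ unit factors on $S^{d-1}$ and $Z$ in a Frobenius ball of radius $d^{c\ell}$, and using that $\Phi$ is $(r\ell/\tau)$-Lipschitz on this domain, a $\delta$-net with $\delta=\epsilon\tau/(r\ell)$ produces an $\epsilon$-net of this part of $\zeta_{\mathrm{CP}}(r)$ of log-size $r\ell d\log(1/\epsilon)+O\bigl(r\ell d\log r+r\ell^2 d\log d\bigr)=r\ell d\log(1/\epsilon)+O(r\ell^2 d\log d)$, which is exactly the desired form. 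On the complementary event $\sigma_{\min}(K)<\tau$, the $r$ unit rank-one tensors forming the columns of $K$ are within $\tau$ of being linearly dependent, so one would discard one of them --- the one whose coefficient in a near-dependence has absolute value at least $r^{-1/2}$ --- re-express $\bT$ through the remaining $r-1$ to obtain a CP-rank-$\le(r-1)$ tensor close to $\bT$, and close by induction on $r$ (the base $r=1$ being the elementary covering of a Segre variety).

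The genuinely delicate point is the error accounting in the ill-conditioned case: the deleted summand may carry a large coefficient, so one must show that the near-dependence is quantitatively strong enough that the Frobenius error introduced at each deletion is negligible at the scale $\tau$ in use --- and that the parameters of the lower-rank approximant stay controlled, which is tied to the fact that tangent directions of the rank-one variety themselves have CP rank at most $\ell$. Only once this quantitative comparison between $\zeta_{\mathrm{CP}}(r)$ and its well-conditioned part is in place does the induction close with total error at most $\epsilon$ after at most $r$ steps and with exponents matching $r\ell d\log(1/\epsilon)+Cr\ell^2 d\log d$. The remaining ingredients --- the scaling normalization, the Lipschitz estimate for $\Phi$, and the net-counting --- are routine.
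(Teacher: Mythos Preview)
The paper does not prove this theorem. Theorem~\ref{thm:KZ23} is quoted verbatim from \cite[Theorem~3.1]{zhang2023covering} and used as a black box; the surrounding text says only that ``such a bound on the covering numbers of tensors with low CP rank has only recently appeared in the literature and is provided below,'' and then moves on to combine it with Lemma~\ref{lemma:COV_Mono}. There is therefore no proof in the paper against which to compare your proposal.

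If your goal is to supply a self-contained argument in place of the citation, the outline you give is broadly the right shape --- cover the CP parameter space and push forward through the Lipschitz parametrization --- but the ill-conditioned branch is where the real work lies, and your sketch there is not yet a proof: you assert that when $\sigma_{\min}(K)<\tau$ one can delete a factor and land $\epsilon$-close to a rank-$(r-1)$ tensor with controlled parameters, but you have not shown that the error incurred is actually $O(\tau)$ rather than $O(\tau\cdot\|Z\|_F)$, and $\|Z\|_F$ is precisely what is unbounded in this regime. Making that recursion close with a uniform error budget is the substantive content of the cited result, and you would need to consult \cite{zhang2023covering} (or reproduce its argument) to fill that gap.
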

Combining Lemma~\ref{lemma:COV_Mono} and Theorem~\ref{thm:KZ23} with $K=\zeta_{\mathrm{CP}}(2r)$, $L=\zeta_{\mathrm{S}}(r)$ and $d=\|\cdot\|_F$, we obtain:
\begin{lemma}\label{lemma:COV-S}
   Let $C>0$ be an absolute constant. For any $d\ge 2$, $r>0$ and $\epsilon\in(0,2]$,
    \[
\log\mathcal{N}\Bigl(\zeta_{S}(2r),\|\cdot\|_F,\epsilon\Bigr) \le 2r\ell d \log \frac{2}{\epsilon} + Cr\ell^2 d\log d.
    \]
\end{lemma}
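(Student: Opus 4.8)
The plan is to combine the monotonicity property of covering numbers (Lemma~\ref{lemma:COV_Mono}) with the CP-rank covering bound (Theorem~\ref{thm:KZ23}), exactly as the text preceding the statement indicates. First I would invoke the containment $\zeta_S(2r)\subset \zeta_{\mathrm{CP}}(2r)$, which follows because $\mathrm{rank}_{\mathrm{CP}}(\bT)\le \mathrm{rank}_S(\bT)$ for every tensor $\bT$ (a symmetric rank-one decomposition $\bT=\sum_i\lambda_i\boldsymbol{v}_i^{\otimes\ell}$ is in particular a CP decomposition with $\ell$ copies of $\lambda_i^{1/\ell}\boldsymbol{v}_i$, absorbing signs appropriately), together with the fact that both sets impose the same normalization $\|\bT\|_F=1$.

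Next I would apply Lemma~\ref{lemma:COV_Mono} with $L=\zeta_S(2r)$, $K=\zeta_{\mathrm{CP}}(2r)$, and the metric $d=\|\cdot\|_F$, yielding
\[
\mathcal{N}\bigl(\zeta_S(2r),\|\cdot\|_F,\epsilon\bigr)\le \mathcal{N}\bigl(\zeta_{\mathrm{CP}}(2r),\|\cdot\|_F,\epsilon/2\bigr).
\]
Then I would apply Theorem~\ref{thm:KZ23} with parameter $2r$ in place of $r$ and $\epsilon/2$ in place of $\epsilon$; note this requires $\epsilon/2\in(0,2]$, i.e.\ $\epsilon\in(0,4]$, which is implied by the hypothesis $\epsilon\in(0,2]$. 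This gives
\[
\log\mathcal{N}\bigl(\zeta_{\mathrm{CP}}(2r),\|\cdot\|_F,\epsilon/2\bigr)\le 2r\ell d\log\frac{2}{\epsilon}+C(2r)\ell^2 d\log d.
\]
Chaining the two displays produces $\log\mathcal{N}(\zeta_S(2r),\|\cdot\|_F,\epsilon)\le 2r\ell d\log(2/\epsilon)+2Cr\ell^2 d\log d$; absorbing the factor $2$ into the absolute constant $C$ (relabelling) yields the stated bound.

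There is essentially no obstacle here — the lemma is a purely formal corollary of two cited results, so the only things to be careful about are bookkeeping: verifying the parameter substitutions lie in the allowed ranges (in particular that $\epsilon\in(0,2]$ keeps $\epsilon/2$ within the domain of Theorem~\ref{thm:KZ23}), confirming that the normalization constraint $\|\bT\|_F=1$ is preserved so that the inclusion $\zeta_S(2r)\subset\zeta_{\mathrm{CP}}(2r)$ is genuinely a subset relation between the two sphere-restricted sets, and noting that the absolute constant $C$ in the conclusion need not equal the one in Theorem~\ref{thm:KZ23} (it is twice as large). If one wanted to avoid even the factor-of-$2$ loss in $\log(2/\epsilon)$ relative to the bound, one could alternatively note that $\zeta_S(2r)$ can be covered directly once a cover of $\zeta_{\mathrm{CP}}(2r)$ at scale $\epsilon$ is intersected with a neighborhood, but the monotonicity route above is cleanest and matches the statement as written.
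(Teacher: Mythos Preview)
Your proof is correct and follows exactly the route the paper indicates: apply Lemma~\ref{lemma:COV_Mono} with $L=\zeta_S(2r)\subset K=\zeta_{\mathrm{CP}}(2r)$, then invoke Theorem~\ref{thm:KZ23} at rank $2r$ and scale $\epsilon/2$, and absorb the resulting factor of $2$ into the absolute constant. One cosmetic remark: to justify $\mathrm{rank}_{\mathrm{CP}}(\bT)\le\mathrm{rank}_S(\bT)$ it is cleaner to write $\lambda_i\boldsymbol{v}_i^{\otimes\ell}=(\lambda_i\boldsymbol{v}_i)\otimes\boldsymbol{v}_i\otimes\cdots\otimes\boldsymbol{v}_i$ rather than invoking $\lambda_i^{1/\ell}$, which avoids any sign issue when $\ell$ is even.
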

\subsection{Probability Estimates }\label{prob:EST}
Our probability estimates crucially rely on Carbery-Wright inequality, a powerful tool for establishing anti-concentration bounds for polynomials of random vectors.
\begin{theorem}\label{thm:CW}{\cite[Theorem~8]{carbery2001distributional}}
    Let $\bX\in\R^d$ has a log-concave density and $P:\R^d\to\R$ be a polynomial with $\mathrm{deg}(P)=\ell$. Then, there exists an absolute constant $C>0$ such that for any $\epsilon>0$, 
    \[
\mathbb{P}\bigl[|P(\bX)|\le \epsilon\bigr]\le C\ell \left(\frac{\epsilon}{\sqrt{\mathbb{E}[P(\bX)^2]}}\right)^{\frac1\ell}.
    \]
\end{theorem}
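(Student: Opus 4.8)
This is the Carbery--Wright inequality, a known but nontrivial result; the plan is to reconstruct the argument of~\cite{carbery2001distributional}. Write $\mu$ for the law of $\bX$. After the rescaling $P\mapsto P/\epsilon$, the claim is equivalent to the assertion that a degree-$\ell$ polynomial with $\mu(|P|\le 1)\ge\delta$ must satisfy $\mathbb{E}_\mu[P^2]\le (C\ell/\delta)^{2\ell}$; that is, a polynomial which is small on a set of non-negligible measure is small in $L^2(\mu)$. I would establish this in two stages: (i) reduce from $\R^d$ to $\R$, and (ii) settle the one-dimensional case using the factorization of $P$ together with the rigidity of one-dimensional log-concave densities.

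For stage (i) I would argue by contradiction. Suppose $\mu(|P|\le 1)\ge\delta$ but $\mathbb{E}_\mu[P^2]\ge M$ for an enormous constant $M=M(\ell,\delta)$. Apply the Lov\'asz--Simonovits / Kannan--Lov\'asz--Simonovits localization lemma to the two functions $(\mathbf{1}_{\{|P|\le 1\}}-\delta)\rho$ and $(P^2-M)\rho$, where $\rho$ is the density of $\mu$ (both have nonnegative integral). This produces a ``needle'': a segment in $\R^d$ carrying a weight of the form (exponential of an affine function)$\,\times\,$(nonnegative power of an affine function), which is log-concave as a product of log-concave factors, and on which both integral inequalities persist. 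Restricting $P$ to the segment gives a univariate polynomial $Q$ of degree at most $\ell$; if $\deg Q=0$ one already has a contradiction (a.e.\ $|Q|\le 1$ forces $Q^2\le 1$, incompatible with mean $\ge M$), and otherwise the needle contradicts stage (ii). One could replace localization by Carbery and Wright's direct induction on $d$ via Pr\'ekopa's theorem (conditionals and marginals of log-concave measures are log-concave), but localization is cleaner to state.

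For stage (ii), let $\nu$ be a one-dimensional log-concave probability measure of scale $\sigma$ (standard deviation) with density $\rho_\nu$, and let $Q$ have degree $k\ge 1$ and leading coefficient $c$; the goal is $\nu(|Q|\le\epsilon)\le Ck\,(\epsilon/\|Q\|_{L^2(\nu)})^{1/k}$, which implies the stated bound since $k\le\ell$. Factor $Q=c\prod_{j\le k}(x-z_j)$ over $\mathbb{C}$. First separate the roots into those lying within $O(\sigma)$ of the bulk of $\nu$ and those farther away; the far factors are essentially constant on the effective support, so dividing them out reduces to the case where every root is within $O(\sigma)$ of the support of $\nu$. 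In that case one has $\|\rho_\nu\|_\infty\lesssim 1/\sigma$ (a classical fact for one-dimensional log-concave densities), and one bounds $\|Q\|_{L^2(\nu)}^2=c^2\,\mathbb{E}_\nu\!\bigl[\prod_j|X-z_j|^2\bigr]$ from above — absorbing the polynomial tails using the exponential decay of log-concave tails — to obtain a lower bound on $|c|$ in terms of $\|Q\|_{L^2(\nu)}$ and $\sigma$. Finally, $|Q(x)|\le\epsilon$ forces $|x-\mathrm{Re}\,z_j|\le(\epsilon/|c|)^{1/k}$ for some $j$, so $\{|Q|\le\epsilon\}$ is covered by intervals of length $2(\epsilon/|c|)^{1/k}$ about the distinct real parts of the roots; a union bound over these, combined with the bounds on $|c|$ and $\|\rho_\nu\|_\infty$, yields the claim.

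The crux is stage (ii). The delicate point is to balance the \emph{number} of covering intervals (the number of distinct root clusters) against the moment bound for $\mathbb{E}_\nu[\prod_j|X-z_j|^2]$ (which grows when roots coalesce): these effects trade off in exactly the way needed to produce the \emph{linear}-in-$\ell$ prefactor and the $1/\ell$ exponent rather than something polynomially or exponentially worse, and making this quantitative — together with controlling the tail contributions so that the ``near/far'' reduction is essentially lossless — is the main work. By comparison, stage (i) is routine once the localization lemma is in hand, modulo checking that the needle weight is genuinely log-concave (it is) and that restriction to a segment does not cause a problematic degree drop (handled by the $\deg Q=0$ case above).
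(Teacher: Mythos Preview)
The paper does not prove this theorem at all: Theorem~\ref{thm:CW} is stated with the citation \cite[Theorem~8]{carbery2001distributional} and is used as a black box in the proof of Proposition~\ref{Prop:MAIN}. There is no ``paper's own proof'' to compare against; the authors simply invoke the Carbery--Wright inequality as a known result from the literature.

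Your sketch is a plausible outline of how one would reconstruct the Carbery--Wright argument, and the two-stage plan (localization to one dimension, then a root-factorization argument for univariate polynomials against log-concave weights) is in the right spirit. That said, it remains a sketch: the ``crux'' you flag in stage~(ii)---balancing the number of root clusters against the moment bound to get the linear-in-$\ell$ prefactor---is precisely where the real work lies, and you have not actually carried it out. For the purposes of this paper none of that is needed; the result is quoted, not proved.
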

We apply Theorem~\ref{thm:CW} with $\bX\sim \mathcal{D}^{\otimes d}$,  where $\mathcal{D}$ is log-concave, to the polynomial
\[
P(\bX) = \langle \bT,\bX^{\otimes \ell}\rangle,\quad\text{where}\quad \bT\in\zeta_S(2r).
\]
For this, we need to control $\mathbb{E}[P(\bX)^2]$ in the denominator, which is the subject of our next result.
\begin{proposition}\label{prop:f-Expand}
Let $\bT\in\R^{d\times \cdots \times d}$ be an order-$\ell$ tensor with bounded $\mathrm{rank}_S(\bT)$. Then, 
\[
\mathbb{E}_{\bX\sim \mathcal{D}^{\otimes d}}\Bigl[\langle \bT,\bX^{\otimes \ell}\rangle^2\Bigr]\ge \Xi \cdot \|\bT\|_F^2,
\]
where $\Xi = \mathcal{C}(\mathcal{D},\ell)^d$ for a finite constant $\mathcal{C}(\mathcal{D},\ell)>0$ depending only on $\ell$ and the moments of the distribution $\mathcal{D}$.
\end{proposition}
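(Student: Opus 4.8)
The plan is to expand the polynomial $P(\bX):=\langle\bT,\bX^{\otimes\ell}\rangle$ in the orthonormal polynomial basis of $L^2(\mathcal{D}^{\otimes d})$, read off its top‑degree coefficients (which are the entries of $\bT$ up to explicit scalars), and discard the remaining coefficients; what is left is exactly $\|\bT\|_F^2$ times a constant. To start, note that finite $\mathrm{rank}_S(\bT)$ forces $\bT$ to be symmetric, so for a multi‑index $\beta\in\Z_{\ge0}^d$ with $|\beta|:=\sum_j\beta_j=\ell$ the $\binom{\ell}{\beta}:=\ell!/\prod_j\beta_j!$ entries $\bT_{i_1,\dots,i_\ell}$ whose index tuple has multiplicity profile $\beta$ share a common value $\bT_\beta$. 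Writing $\bX^\beta:=\prod_{j\in[d]}\bX(j)^{\beta_j}$, this yields the two elementary identities
\[
P(\bX)=\sum_{|\beta|=\ell}\binom{\ell}{\beta}\bT_\beta\,\bX^\beta,\qquad \|\bT\|_F^2=\sum_{|\beta|=\ell}\binom{\ell}{\beta}\bT_\beta^2 .
\]

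Since $\mathcal{D}$ is log‑concave with a density, it is non‑degenerate and has all moments finite, so $L^2(\mathcal{D})$ admits orthonormal polynomials $p_0\equiv1,p_1,\dots,p_\ell$ with $\deg p_k=k$ and leading coefficient $a_k>0$. By independence of the coordinates of $\bX\sim\mathcal{D}^{\otimes d}$, the products $P_\gamma(\bX):=\prod_{j\in[d]}p_{\gamma_j}(\bX(j))$ form an orthonormal system in $L^2(\mathcal{D}^{\otimes d})$, and $\{P_\gamma:|\gamma|\le\ell\}$ spans the space of polynomials of degree at most $\ell$, which contains $P$. Hence $P=\sum_{|\gamma|\le\ell}c_\gamma P_\gamma$ with $c_\gamma=\E{P(\bX)P_\gamma(\bX)}$, and Parseval in this finite‑dimensional subspace gives $\E{P(\bX)^2}=\sum_{|\gamma|\le\ell}c_\gamma^2$.

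The core step is to identify $c_\gamma$ for $|\gamma|=\ell$. Because $P$ is homogeneous of degree $\ell$, its degree‑$\ell$ homogeneous part equals $P$ itself, whereas the degree‑$\ell$ part of $\sum_\gamma c_\gamma P_\gamma$ is $\sum_{|\gamma|=\ell}c_\gamma\bigl(\prod_j a_{\gamma_j}\bigr)\bX^\gamma$ (the $P_\gamma$ with $|\gamma|<\ell$ contribute nothing, and each $P_\gamma$ with $|\gamma|=\ell$ has leading term $\prod_j a_{\gamma_j}\cdot\bX^\gamma$). Comparing coefficients of the linearly independent monomials $\bX^\gamma$ with $|\gamma|=\ell$ gives $c_\gamma\prod_j a_{\gamma_j}=\binom{\ell}{\gamma}\bT_\gamma$, i.e. $c_\gamma=\binom{\ell}{\gamma}\bT_\gamma / \prod_j a_{\gamma_j}$, with no cancellation since these coefficients are forced by the top‑degree part alone. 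Discarding the nonnegative lower‑degree terms,
\[
\E{P(\bX)^2}\ \ge\ \sum_{|\gamma|=\ell}c_\gamma^2\ =\ \sum_{|\gamma|=\ell}\frac{\binom{\ell}{\gamma}^2\bT_\gamma^2}{\prod_j a_{\gamma_j}^2}.
\]
For $|\gamma|=\ell$ at most $\ell$ of the $\gamma_j$ are nonzero and $a_0=1$, so $\prod_j a_{\gamma_j}^2\le A^\ell$ where $A:=\max\{1,a_1^2,\dots,a_\ell^2\}$ depends only on $\ell$ and the first $2\ell$ moments of $\mathcal{D}$; together with $\binom{\ell}{\gamma}\ge1$ this gives $\E{P(\bX)^2}\ge A^{-\ell}\sum_{|\gamma|=\ell}\binom{\ell}{\gamma}\bT_\gamma^2=A^{-\ell}\|\bT\|_F^2$. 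Taking $\mathcal{C}(\mathcal{D},\ell):=\min\{1,A^{-\ell}\}$ yields $\Xi:=\mathcal{C}(\mathcal{D},\ell)^d\le A^{-\ell}$ and the claimed inequality (the argument in fact delivers the stronger, dimension‑free constant $A^{-\ell}$).

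I expect the main obstacle to be the coefficient identification in the third paragraph: setting up the change of basis between monomials and products of orthogonal polynomials carefully, and recognizing that the top‑degree coefficients $c_\gamma$, $|\gamma|=\ell$, are pinned down exactly — each equal to $\bT_\gamma$ times the explicit factor $\binom{\ell}{\gamma}/\prod_j a_{\gamma_j}$ — with no interference from lower‑degree basis elements, so that dropping them is lossless at the level of the $\bT_\gamma$'s. A secondary point requiring care is the analytic groundwork, namely verifying that an arbitrary log‑concave $\mathcal{D}$ is non‑degenerate with all moments finite, so that $p_0,\dots,p_\ell$ exist and $P\in L^2(\mathcal{D}^{\otimes d})$.
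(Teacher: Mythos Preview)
Your proof is correct and follows essentially the same route as the paper: expand $\langle\bT,\bX^{\otimes\ell}\rangle$ in the orthonormal polynomial basis for $L^2(\mathcal{D}^{\otimes d})$, apply Parseval, identify the coefficients indexed by $|\gamma|=\ell$, and discard the rest. The paper pins down those coefficients via the orthogonality relation $\mathbb{E}_{X\sim\mathcal{D}}[X^k P_n(X)]=\sqrt{D_n/D_{n-1}}\,\ind\{k=n\}$ for $k\le n$ (Lemma~\ref{lemma:Key-Ort-Poly}), which is equivalent to your leading-term comparison since the leading coefficient of $P_n$ is $a_n=\sqrt{D_{n-1}/D_n}$. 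The one substantive difference is your final estimate: the paper bounds each of the $d$ factors $D_{\alpha_j}/D_{\alpha_j-1}=a_{\alpha_j}^{-2}$ below by a common constant $\mathcal{C}(\mathcal{D},\ell)\le 1$, producing the dimension-dependent $\Xi=\mathcal{C}(\mathcal{D},\ell)^d$, whereas you observe that all but at most $\ell$ of those factors equal $1$ (since $a_0=1$), yielding the dimension-free constant $A^{-\ell}$. This strictly sharpens the paper's bound; for the application in Section~\ref{sec:PF_PROP} the improvement is immaterial (the constant $\Xi$ is absorbed into the $O_\epsilon(1)$ term in~\eqref{eq:BIG-EXP-UP-BD}), but it is a cleaner statement.
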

We prove Proposition~\ref{prop:f-Expand} using orthogonal polynomial expansion w.r.t.\,the product measure $\mathcal{D}^{\otimes d}$. See Section~\ref{SEC:Orthogonal} for a background and useful properties of orthogonal polynomials, as well as the complete proof of Proposition~\ref{prop:f-Expand}.
\subsection{Putting Everything Together: Proof of Proposition~\ref{Prop:MAIN}}
Equipped with Lemma~\ref{lemma:COV-S}, Theorem~\ref{thm:CW} and Proposition~\ref{prop:f-Expand}, we are ready to prove Proposition~\ref{Prop:MAIN}. We follow an outline similar to~\cite{eldar2012uniqueness,mu2014square}. 
\begin{proof}[Proof of Proposition~\ref{Prop:MAIN}] 
Let $\zeta'$ be an $\epsilon$-net for $\zeta_S(2r)$ with respect to $\|\cdot\|_F$ with the smallest cardinality. In light of Definition~\ref{def:eps-net} and Lemma~\ref{lemma:COV-S}, we obtain 
\begin{equation}\label{eq:SIZE-zeta'}
    |\zeta'|\le \exp\left(2r\ell d \log\frac{2}{\epsilon} + Cr\ell^2 d\log d\right).
\end{equation}
Next, take any $\widehat{\bT}\in \zeta'$ and consider $P(\bX) = \langle \widehat{\bT},\bX^{\otimes \ell}\rangle$, so that $\mathrm{deg}(P)=\ell$. Applying Proposition~\ref{prop:f-Expand} together with the fact $\|\widehat{\bT}\|_F=1$,  we obtain $\mathbb{E}[P(\bX)^2]\ge \Xi$. So, 
\begin{equation}\label{eq:MAX_PROB}
    \mathbb{P}\left[\max_{1\le i\le N}\bigl|\langle \widehat{\bT},\bX_i^{\otimes \ell}\rangle\bigr|<2\epsilon \log\frac1\epsilon\right]\le \left(\frac{C\ell}{\Xi^{1/\ell}}\left(2\epsilon\log \frac1\epsilon\right)^{\frac1\ell}\right)^N,
\end{equation}
using Carbery-Wright inequality, Theorem~\ref{thm:CW}, and the fact that $\bX_i,i\in[N]$ are i.i.d. 

Next, fix a $\bT\in \zeta_S(2r)$ and let $\widehat{\bT}\in\zeta'$ be such that $\|\bT-\widehat{\bT}\|_F\le \epsilon$. We have
\begin{align}
    \Bigl|\bigl|\langle \bT,\bX_i^{\otimes \ell}\rangle\bigr|-\bigl|\langle \widehat{\bT},\bX_i^{\otimes \ell}\rangle\bigr|\Bigr|&\le \Bigl|\bigl\langle \bT-\widehat{\bT},\bX_i^{\otimes \ell}\bigr\rangle\Bigr|\label{eq:reverse-triangle} \\
    &\le \|\bT-\widehat{\bT}\|_F \cdot \max_i \|\bX_i^{\otimes \ell}\|_F\label{eq:Cauchy-Schwarz}\\
    &\le \epsilon\cdot \max_i \|\bX_i\|_2^\ell\label{eq:ALGEBRA},
\end{align}
where~\eqref{eq:reverse-triangle} uses the reverse triangle inequality, $||x|-|y||\le |x-y|$ valid for all $x,y$, \eqref{eq:Cauchy-Schwarz} uses Cauchy-Schwarz inequality, and finally, \eqref{eq:ALGEBRA} uses the fact $\|\bT-\widehat{\bT}\|_F\le \epsilon$ and that $\|\bX_i^{\otimes \ell}\|_F = \sqrt{\sum_{i_1,\dots,i_\ell}\bX_i(i_1)^2\cdots \bX_i(i_\ell)^2} = \|\bX_i\|_2^{\ell}$. Using~\eqref{eq:ALGEBRA},
\begin{align}
    \max_{1\le i\le N}\bigl|\langle \bT,\bX_i^{\otimes \ell}\rangle\bigr|&\ge \max_{1\le i\le N} \bigl|\langle \widehat{\bT},\bX_i^{\otimes \ell}\rangle\bigr| -\epsilon \cdot \max_i \|\bX_i\|_2^\ell \nonumber\\
    &\ge \min_{\widehat{\bT}\in\zeta'}\max_{1\le i\le N} \bigl|\langle \widehat{\bT},\bX_i^{\otimes \ell}\rangle\bigr| -\epsilon \cdot \max_i \|\bX_i\|_2^\ell\label{eq:ALGEBRA-2}.
\end{align}
Notice that the right hand side of~\eqref{eq:ALGEBRA-2} is independent of $\bT$. Taking an infimum over $\bT\in \zeta_S(2r)$, we thus obtain
\begin{equation*}%\label{eq:ALGEBRA-3}
 \inf_{\bT\in \zeta_S(2r)}   \max_{1\le i\le N}\bigl|\langle \bT,\bX_i^{\otimes \ell}\rangle\bigr|\ge \min_{\widehat{\bT}\in\zeta'}\max_{1\le i\le N} \bigl|\langle \widehat{\bT},\bX_i^{\otimes \ell}\rangle\bigr| -\epsilon \cdot \max_i \|\bX_i\|_2^\ell.
\end{equation*}
This yields the following inclusion property:
\begin{align}
    &\left\{ \inf_{\bT\in \zeta_S(2r)}   \max_{1\le i\le N}\bigl|\langle \bT,\bX_i^{\otimes \ell}\rangle\bigr|<\epsilon\log\frac1\epsilon\right\} \nonumber\\ 
    &\subset \left\{\min_{\widehat{\bT}\in\zeta'}\max_{1\le i\le N} \bigl|\langle \widehat{\bT},\bX_i^{\otimes \ell}\rangle\bigr|<2\epsilon\log\frac1\epsilon\right\}\cup \left\{\max_i \|\bX_i\|_2>\log^{1/\ell}\frac1\epsilon\right\}.\label{eq:event-include}
\end{align}
Thus,
\begin{align}
&\mathbb{P}\left[\inf_{\bT\in \zeta_S(2r)}   \max_{1\le i\le N}\bigl|\langle \bT,\bX_i^{\otimes \ell}\rangle\bigr|<\epsilon\log\frac1\epsilon\right]\nonumber \\
    &\le \mathbb{P}\left[\min_{\widehat{\bT}\in\zeta'}\max_{1\le i\le N} \bigl|\langle \widehat{\bT},\bX_i^{\otimes \ell}\rangle\bigr|<2\epsilon\log\frac1\epsilon\right] + \mathbb{P}\left[\max_i \|\bX_i\|_2>\log^{1/\ell}\frac1\epsilon\right]\label{eq:U--BD} \\
    &\le |\zeta'|\mathbb{P}\left[\max_{1\le i\le N} \bigl|\langle \widehat{\bT},\bX_i^{\otimes \ell}\rangle\bigr|<2\epsilon\log\frac1\epsilon\right] + \mathbb{P}\left[\max_i \|\bX_i\|_2>\log^{1/\ell}\frac1\epsilon\right]\label{eq:U--bd-2} \\
    &\le \exp\left(2r\ell d\log\frac2\epsilon + Cr\ell^2 d\log d-\frac{N}{\ell}\log\frac{1}{2\epsilon}+\frac{N}{\ell}\log \log\frac1\epsilon +N \log \frac{C\ell}{\Xi^{1/\ell}}\right) \label{EQ:BIG-EXPONENT}\\
    &+\mathbb{P}\left[\max_i \|\bX_i\|_2>\log^{1/\ell}\frac1\epsilon\right]\label{eq:COV-LEMMA-PROB-EST},
\end{align}
where~\eqref{eq:U--BD} follows by using~\eqref{eq:event-include} and taking a union bound,~\eqref{eq:U--bd-2} follows by taking another union bound over $\widehat{\bT}\in\zeta'$, and \eqref{EQ:BIG-EXPONENT} follows by combining~\eqref{eq:SIZE-zeta'} and~\eqref{eq:MAX_PROB}. Now, suppose that $N = Crd$ where $C>2\ell^2$. Then,
\[
2r\ell d \log \frac1\epsilon - \frac{N}{\ell}\log\frac1\epsilon = C'rd\log\epsilon,
\]
where $C'=\frac{C}{\ell}-2\ell>0$. With this, the term appearing in~\eqref{EQ:BIG-EXPONENT} is upper bounded by
\begin{equation}\label{eq:BIG-EXP-UP-BD}
    \epsilon^{C'\ell rd}\left(\log\frac1\epsilon\right)^{\frac{N}{\ell}}\cdot O_\epsilon(1),
\end{equation}
where the terms $O_\epsilon(1)$ depend only on $r,\ell,d$ and remain constant as $\epsilon\to 0$. Since $C'>0$, \eqref{eq:BIG-EXP-UP-BD} tends to zero as $\epsilon\to 0$. Thus, taking a countable sequence $(\epsilon_n)_{n\in\mathbb{N}}$ with $\epsilon_n\to 0$ and using the continuity of probability measures, we obtain
\[
\mathbb{P}\left[\inf_{\bT\in \zeta_S(2r)}   \max_{1\le i\le N}\bigl|\langle \bT,\bX_i^{\otimes \ell}\rangle\bigr|=0\right]=0.
\]
This concludes the proof of Proposition~\ref{Prop:MAIN}.
\end{proof}
\begin{remark}\label{remark:Quadratic}
    We remark on the quadratic dependence of $N$ on $\ell$ in Theorem~\ref{thm:MAIN} and Proposition~\ref{prop:f-Expand}. In proving Proposition~\ref{prop:f-Expand}, two terms play a key role: the covering upper bound~\eqref{eq:SIZE-zeta'} and the probability estimate~\eqref{eq:MAX_PROB}. For our argument to hold, the term $\frac{N}{\ell}\log\frac{1}{\epsilon}$ from~\eqref{eq:MAX_PROB} must dominate the term $2r\ell d\log\frac{1}{\epsilon}$ from~\eqref{eq:SIZE-zeta'}. This leads to a quadratic dependence on $\ell$. The $\frac1\ell$ scaling in $\frac{N}{\ell}\log\frac1\epsilon$ arises from~\eqref{eq:MAX_PROB} and appears intrinsic to the Carbery-Wright bound (e.g., for a polynomial $P(x)=x^\ell$ and $Z\sim \cN(0,1)$, $\mathbb{P}[|P(Z)|\le \epsilon]$ indeed scales like $O_\epsilon(\epsilon^{1/\ell})$). For the covering bound, we are unsure if the dependence of~\eqref{eq:SIZE-zeta'} and Lemma~\ref{lemma:COV-S} on $\ell$ is optimal, which we leave as an interesting question. (Thanks to an anonymous reviewer for raising this question.) 
\end{remark}

\section{Background on Orthogonal Polynomials}\label{SEC:Orthogonal}
Orthogonal polynomials play an important role in our setup.  In this section, we formally define orthogonal polynomials with respect to $\mathcal{D}$ and collect useful properties. The content here is largely adapted from  S.\,Lalley's notes~\cite{LalleyNotes}, see also the classical textbook by~\cite{szego1939orthogonal}.

Let $\mathcal{D}$ be a log-concave distribution on $\R$, and $\bX\sim \mathcal{D}^{\otimes d}$. That is, $\bX=(\bX(1),\dots,\bX(d))$, where $\bX(i)\sim \mathcal{D}$ are i.i.d. A family $\{P_n(x)\}_{n\ge 0}$ of polynomials are orthonormal w.r.t.\,$\mathcal{D}$ if
\[
\mathbb{E}_{X\sim \mathcal{D}}\bigl[P_n(X)P_m(X)\bigr] = \ind\{n=m\}.
\]
Given $\mathcal{D}$, such polynomials exist due to classical results of~\cite{szego1939orthogonal}. Building on~\cite{LalleyNotes} (and using its notation), the following proposition collects the necessary properties of orthonormal polynomial that we utilize.
\begin{proposition}\label{Prop:Orthonormal}
Let $\mathcal{D}$ be a log-concave distribution and $ 
m_i:=\mathbb{E}_{X\sim \mathcal{D}}[X_i]$ for $i\ge 0$.
\begin{itemize}
    \item[(a)] We have $m_i<\infty$ for all $i$. Furthermore, the associated family $\{P_n(x)\}_{n\ge 0}$ of orthogonal polynomials are given by: $P_0(x)=1$, and for $n\ge 1$, $P_n(x) = D_n(x)/\sqrt{D_n D_{n-1}}$ where
    \[
D_n(x) = 
\begin{vmatrix}
m_0 & m_1 & m_2 & \cdots & m_n \\
m_1 & m_2 & m_3 & \cdots & m_{n+1} \\
\vdots & \vdots & \vdots & \ddots & \vdots \\
m_{n-1} & m_n & m_{n+1} & \cdots & m_{2n-1} \\
1 & x & x^2 & \cdots & x^n
\end{vmatrix}
\]
and 
\[
D_n = \begin{vmatrix}
m_0 & m_1 & m_2 & \cdots & m_{n} \\
m_1 & m_2 & m_3 & \cdots & m_{n+1} \\
\vdots & \vdots & \vdots & \ddots & \vdots \\
m_n & m_{n+1} & m_{n+2} & \cdots & m_{2n}
\end{vmatrix}
\]
and that $0<D_n<\infty$ for all $n$.
\item[(b)] For any $n\ge 1$ and any $k<n$, we have
\[
\mathbb{E}_{X\sim \mathcal{D}}[X^n P_n(X)] = \sqrt{\frac{D_n}{D_{n-1}}}\quad\text{and}\quad \mathbb{E}_{X\sim \mathcal{D}}[X^k P_n(X)]=0.
\]
\item[(c)] For  $\boldsymbol{\alpha}=(\alpha_1,\dots,\alpha_d)\in\mathbb{N}_0^d$ and $\boldsymbol{X}=(X_1,\dots,X_d)\in\R^d$, the family $\{\boldsymbol{P_\alpha}\}$ of polynomials defined by
\[
\boldsymbol{P_\alpha}(\boldsymbol{X}) =\prod_{1\le i\le d}P_{\alpha_i}(X_i)
\]
are orthonormal with respect to the product measure $\mathcal{D}^{\otimes d}$.
\end{itemize}
\end{proposition}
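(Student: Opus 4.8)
The plan is to run the classical Gram/Hankel-determinant construction of orthogonal polynomials, being explicit about the one point where log-concavity of $\mathcal{D}$ is actually used. I would first dispatch the finiteness claim in part (a): a log-concave density $f=e^{-\phi}$ on $\R$ has at least exponential tails — if the support is bounded this is trivial, and otherwise convexity of $\phi$ forces $\phi(x)\ge a|x|+b$ for some $a>0$ and all large $|x|$ (take a supporting line of $\phi$ where its slope is bounded away from $0$), so $f(x)\le Ce^{-c|x|}$ and hence $m_i=\mathbb{E}_{X\sim\mathcal{D}}[X^i]<\infty$ for every $i$; I would cite \cite{bagnoli2006log} for this tail estimate. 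Next I would show the Hankel matrix $H_n=(m_{i+j})_{0\le i,j\le n}$ is positive definite: for a polynomial $p(x)=\sum_{i\le n}c_ix^i$ one has $c^{\top}H_nc=\mathbb{E}_{X\sim\mathcal{D}}[p(X)^2]\ge 0$, with equality forcing $p(X)=0$ almost surely, which is impossible for $p\ne 0$ because $\mathcal{D}$ has a density and hence infinite support. Therefore $D_n=\det H_n$ satisfies $0<D_n<\infty$ for all $n$, and $D_0=m_0=1$; this settles the positivity assertion of (a).

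With $D_{n-1},D_n>0$ secured, I would verify the determinant formula directly rather than invoking an abstract existence statement. Expanding $D_n(x)$ along its last row $(1,x,\dots,x^n)$ shows $D_n(x)$ is a polynomial of degree exactly $n$ whose leading coefficient is the cofactor of $x^n$, namely $D_{n-1}$. For $0\le k<n$, multiplying $D_n(x)$ by $x^k$ and taking expectation replaces that last row by $(m_k,m_{k+1},\dots,m_{k+n})$, which coincides with an earlier row of the determinant, so $\mathbb{E}[X^kD_n(X)]=0$; for $k=n$ the last row becomes $(m_n,\dots,m_{2n})$, reproducing $H_n$, so $\mathbb{E}[X^nD_n(X)]=D_n$. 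Writing $D_n(x)=D_{n-1}x^n+(\text{lower order})$ and using these orthogonality relations gives $\mathbb{E}[D_n(X)^2]=D_{n-1}\,\mathbb{E}[X^nD_n(X)]=D_{n-1}D_n$. Hence $P_n:=D_n(x)/\sqrt{D_nD_{n-1}}$ has $\mathbb{E}[P_n(X)^2]=1$ and, since each $P_k$ has degree exactly $k$ so that $\operatorname{span}\{P_0,\dots,P_{n-1}\}=\operatorname{span}\{1,x,\dots,x^{n-1}\}$, the relations $\mathbb{E}[X^kD_n(X)]=0$ for $k<n$ also yield $\mathbb{E}[P_nP_m]=\ind\{n=m\}$. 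This proves the remaining parts of (a) (and the standard uniqueness up to sign follows by fixing positive leading coefficients, though it is not needed), and both identities of (b) at once: $\mathbb{E}[X^nP_n(X)]=D_n/\sqrt{D_nD_{n-1}}=\sqrt{D_n/D_{n-1}}$ and $\mathbb{E}[X^kP_n(X)]=0$ for $k<n$.

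Part (c) is then immediate from independence of the coordinates of $\bX\sim\mathcal{D}^{\otimes d}$: for multi-indices $\boldsymbol{\alpha},\boldsymbol{\beta}\in\mathbb{N}_0^d$,
\[
\mathbb{E}_{\bX\sim\mathcal{D}^{\otimes d}}\bigl[\boldsymbol{P_\alpha}(\bX)\,\boldsymbol{P_\beta}(\bX)\bigr]
=\prod_{i=1}^{d}\mathbb{E}_{X_i\sim\mathcal{D}}\bigl[P_{\alpha_i}(X_i)P_{\beta_i}(X_i)\bigr]
=\prod_{i=1}^{d}\ind\{\alpha_i=\beta_i\}=\ind\{\boldsymbol{\alpha}=\boldsymbol{\beta}\},
\]
using the one-dimensional orthonormality from (a). The only genuine obstacle is the opening step — guaranteeing $m_i<\infty$ for all $i$ and the strict positivity $D_n>0$ — since without these the determinant formulas are not even well posed; everything afterwards is routine cofactor bookkeeping. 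I would therefore be careful to invoke the log-concave tail bound and the infinite-support argument explicitly, and otherwise follow \cite{LalleyNotes,szego1939orthogonal}.
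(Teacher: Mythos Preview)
Your proposal is correct and follows essentially the same approach as the paper: both argue moment finiteness from log-concave tails, prove $D_n>0$ via positive definiteness of the Hankel matrix (using that a nonzero polynomial cannot vanish a.s.\ under an absolutely continuous law), extract $\mathbb{E}[X^kD_n(X)]=\ind\{k=n\}D_n$ by the row-replacement/duplicate-row trick, and deduce (c) by factoring over independent coordinates. The paper is terser, deferring parts of (a)--(b) to \cite{LalleyNotes} and citing \cite{bobkov2015concentration,caron2005zero}, whereas you spell out the cofactor bookkeeping explicitly; the substance is the same.
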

\begin{proof}[Proof of Proposition~\ref{Prop:Orthonormal}]
    Part $\mathrm{(a)}$ is reproduced verbatim from~\cite[Proposition~1]{LalleyNotes}. The fact $m_i<\infty$ for log-concave $\mathcal{D}$ is well-known, see, e.g., \cite{bobkov2015concentration}. 
    
    We next show that $0<D_n<\infty$.  As noted, $m_i<\infty$, so $D_n<\infty$ for all $n$. Next, we prove $D_n>0$. Setting $\Upsilon_n =(X^i:0\le i\le n)\in\R^{n+1}$ as a column vector, it suffices to prove that $\mathbb{E}_{X\sim \mathcal{D}}[\Upsilon_n \Upsilon_n^T]$ is positive definite as $D_n:=\mathrm{det}(\mathbb{E}_{X\sim \mathcal{D}}[\Upsilon_n \Upsilon_n^T])$. Suppose that for some $\boldsymbol{v}=(v_0,\dots,v_n)\in\R^{n+1}$, we have $\boldsymbol{v}^T \mathbb{E}[\Upsilon_n \Upsilon_n^T]\boldsymbol{v} = \mathbb{E}[\ip{\Upsilon_n}{\boldsymbol{v}}^2]=0$. Then, $\ip{\Upsilon_n}{\boldsymbol{v}} = \sum_{0\le i\le n}v_i X^i=0$ almost surely w.r.t.\,$\mathcal{D}$. Hence, for the polynomial $Q(x) = \sum_{0\le i\le n}v_ix^i$, we have $Q(X)=0$. By the standard results of~\cite{caron2005zero}, we have  $Q(x)\equiv 0$ identically, i.e., $\boldsymbol{v} = \boldsymbol{0}$. So, $\boldsymbol{v}^T \mathbb{E}_{X\sim \mathcal{D}}[\Upsilon_n \Upsilon_n^T]\boldsymbol{v}=0$ iff $\boldsymbol{v}=0$, forcing $D_n=\mathrm{det}(\mathbb{E}_{X\sim \mathcal{D}}[\Upsilon_n \Upsilon_n^T])>0$.

    As for Part $\mathrm{(b)}$, the proof of Proposition 1 in~\cite{LalleyNotes} reveals 
    \[
    \mathbb{E}_{X\sim \mathcal{D}}[X^k D_n(X)] = \ind\{k=n\} D_n,
    \]
    for every $k\le n$. As $P_n(X)=D_n(x)/\sqrt{D_n D_{n-1}}$, per Part $\mathrm{(a)}$, Part $\mathrm{(b)}$ follows. 
    
    As for Part $\mathrm{(c)}$, fix $\boldsymbol{\alpha},\boldsymbol{\alpha'}\in\mathbb{N}_0^d$ and let $\bX \sim \mathcal{D}^{\otimes d}$ has i.i.d.\,entries. Then, 
    \begin{align*}
            \mathbb{E}_{\bX\sim \mathcal{D}^{\otimes d}}\left[\boldsymbol{P_\alpha}(\bX)\boldsymbol{P_{\alpha'}}(\bX)\right] &= \prod_{1\le i\le d}\mathbb{E}_{X_i\sim \mathcal{D}}\left[P_{\alpha_i}(X_i)P_{\alpha_i'}(X_i)\right] \\
            &= \prod_{1\le i\le d}\ind\{\alpha_i =\alpha_i'\} \\
            &= \ind\left\{\boldsymbol{\alpha}=\boldsymbol{\alpha'}\right\}.
        \end{align*}
        Thus, the family $\{\boldsymbol{P}_{\alpha}\}$ is indeed orthonormal.
\end{proof}
The following result is folklore and can be proven, e.g., by modifying the argument of~\cite{MSC-Post} together with an induction  over $d$ which we skip for simplicity.
\begin{proposition}\label{prop:BASIS}
    The family $\boldsymbol{P_\alpha}$ form an orthonormal basis for $L^2(\mathcal{D}^{\otimes d})$.
\end{proposition}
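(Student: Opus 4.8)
The plan is to reduce to the one-dimensional case and then tensorize. Since each $\boldsymbol{P_\alpha}$ is a polynomial and, conversely, every monomial $\prod_{i}X_i^{\alpha_i}$ lies in the linear span of finitely many $\boldsymbol{P}_{\boldsymbol{\beta}}$ (the change of basis between $\{x^n\}_{n\le N}$ and $\{P_n\}_{n\le N}$ is triangular with nonzero diagonal, because $P_n$ has degree exactly $n$), the family $\{\boldsymbol{P_\alpha}\}$ spans the same subspace of $L^2(\mathcal{D}^{\otimes d})$ as the set of all polynomials in $X_1,\dots,X_d$. Orthonormality is already Proposition~\ref{Prop:Orthonormal}(c), so it remains to show that polynomials are dense in $L^2(\mathcal{D}^{\otimes d})$.

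\emph{Step 1: the case $d=1$.} I would first show $\{P_n\}_{n\ge 0}$ is an orthonormal basis of $L^2(\mathcal{D})$. The key input is that log-concave distributions have an exponential moment: there is $a>0$ with $\int_\R e^{a|x|}\,d\mathcal{D}(x)<\infty$ (a standard consequence of the exponential tail decay of log-concave densities; cf.\ \cite{bobkov2015concentration}). Let $g\in L^2(\mathcal{D})$ be orthogonal to all monomials, i.e.\ $\mathbb{E}_{X\sim\mathcal{D}}[X^n g(X)]=0$ for every $n\ge 0$. Define $G(z)=\mathbb{E}_{X\sim\mathcal{D}}[e^{izX}g(X)]$ for $z\in\mathbb{C}$ with $|\Im z|<a/2$; by Cauchy--Schwarz and the exponential moment bound the integrand lies in $L^1(\mathcal{D})$, and standard arguments (dominated convergence / Morera) show $G$ is holomorphic on this strip with $G^{(n)}(0)=i^n\,\mathbb{E}_{X\sim\mathcal{D}}[X^n g(X)]=0$ for all $n$. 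Hence $G\equiv 0$ on the strip; in particular the Fourier transform of the finite complex measure $g\,d\mathcal{D}$ vanishes on $\R$, so by uniqueness of Fourier transforms $g=0$ $\mathcal{D}$-a.e. Thus the monomials, equivalently $\mathrm{span}\{P_n\}$, are dense in $L^2(\mathcal{D})$. (This is essentially the argument of \cite{MSC-Post}; alternatively, the exponential moment implies Carleman's condition, hence determinacy of the Hamburger moment problem, which is classically known to force completeness of the orthogonal polynomials.)

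\emph{Step 2: induction on $d$.} Use the canonical isometric isomorphism $L^2(\mathcal{D}^{\otimes d})\cong L^2(\mathcal{D})\otimes L^2(\mathcal{D}^{\otimes(d-1)})$ coming from $\mathcal{D}^{\otimes d}=\mathcal{D}\otimes\mathcal{D}^{\otimes(d-1)}$, under which the function $(x_1,\dots,x_d)\mapsto h(x_1)\,k(x_2,\dots,x_d)$ corresponds to $h\otimes k$. By the inductive hypothesis $\{\boldsymbol{P}_{\boldsymbol{\beta}}:\boldsymbol{\beta}\in\mathbb{N}_0^{d-1}\}$ is an orthonormal basis of the second factor and by Step 1 $\{P_n\}_{n\ge 0}$ is one of the first, so $\{P_n\otimes\boldsymbol{P}_{\boldsymbol{\beta}}\}$ is an orthonormal basis of the Hilbert tensor product. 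Transporting back, $P_n\otimes\boldsymbol{P}_{\boldsymbol{\beta}}$ is the function $\boldsymbol{X}\mapsto P_n(X_1)\prod_{i=2}^{d}P_{\beta_{i-1}}(X_i)=\boldsymbol{P}_{(n,\boldsymbol{\beta})}(\boldsymbol{X})$, and as $(n,\boldsymbol{\beta})$ ranges over $\mathbb{N}_0\times\mathbb{N}_0^{d-1}=\mathbb{N}_0^d$ we recover precisely $\{\boldsymbol{P_\alpha}\}$. This closes the induction and yields the claim.

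\emph{Main obstacle.} The only substantive point is Step~1: for a general probability measure on $\R$ with all moments finite the orthogonal polynomials need \emph{not} be complete (indeterminate moment problem), so log-concavity must genuinely be used, and it enters exactly through the exponential moment bound $\int e^{a|x|}\,d\mathcal{D}<\infty$. Once that bound is in hand the Fourier-analytic argument is routine, and Step~2 is purely formal.
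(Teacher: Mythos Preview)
Your proof is correct and follows precisely the approach the paper itself indicates: the paper does not spell out a proof but states the result ``can be proven, e.g., by modifying the argument of~\cite{MSC-Post} together with an induction over $d$,'' and your Step~1 (the Fourier-analytic density argument exploiting the exponential moment of a log-concave law, which you correctly identify as the substantive use of log-concavity) together with your Step~2 (tensorization/induction) is exactly that. You have filled in the details the paper omits, and your identification of the ``main obstacle'' is apt.
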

Proposition~\ref{prop:BASIS} asserts that any $f:\in\R^d\to \R$ with $\mathbb{E}_{\bX\sim \mathcal{D}^{\otimes d}}[f(\bX)^2]<\infty$ admits an expansion $f(\bX) = \sum_{\boldsymbol{\alpha}\in \mathbb{N}_0^d}\Theta_{\boldsymbol{\alpha}} \boldsymbol{P_\alpha}(\bX)$ such that $\mathbb{E}_{\bX\sim \mathcal{D}^{\otimes d}}[f(\bX)^2] =\sum_{\boldsymbol{\alpha}\in \mathbb{N}_0^d}\Theta_{\boldsymbol{\alpha}}^2$.

We are ready to prove Proposition~\ref{prop:f-Expand}.
\begin{proof}[Proof of Proposition~\ref{prop:f-Expand}] In what follows, all expectations are taken w.r.t.\,$\bX\sim \mathcal{D}^{\otimes d}$ unless stated otherwise. Let $r=\mathrm{rank}_S(\bT)$. In light of the definition of symmetric tensor rank~\eqref{eq:SymRank}, we obtain that there exists $\lambda_1,\dots,\lambda_r\in\mathbb{R}$ and $\boldsymbol{v}_1,\dots,\boldsymbol{v}_r\in\R^d$ for which
\begin{equation}\label{eq:F_TEN}
    \bT = \sum_{1\le i\le r}\lambda_i \boldsymbol{v}_i^{\otimes \ell}.
\end{equation}
For $\boldsymbol{\alpha}=(\alpha_1,\dots,\alpha_d)\in\mathbb{N}_0^d$, we use Proposition~\ref{prop:BASIS} to expand
\begin{equation}\label{eq:Theta-Coeff}
  \langle \bT,\bX^{\otimes \ell}\rangle = \sum_{\boldsymbol{\alpha}\in\N_0^d} \Theta_{\boldsymbol{\alpha}} \boldsymbol{P}_{\boldsymbol{\alpha}}(\bX) \quad\text{where}\quad
 \Theta_{\boldsymbol{\alpha}} = \mathbb{E}\Bigl[\langle \bT,\bX^{\otimes \ell}\rangle\boldsymbol{P}_{\boldsymbol{\alpha}}(\bX)\Bigr]. 
    \end{equation}
    Using the fact $\boldsymbol{P}_{\boldsymbol{\alpha}}$ are orthonormal per Proposition~\ref{Prop:Orthonormal}, we obtain
\begin{equation}\label{eq:LOW-BD}
       \mathbb{E}\Bigl[ \langle \bT,\bX^{\otimes \ell}\rangle^2\Bigr] = \sum_{\boldsymbol{\alpha}\in\N_0^d} \Theta_{\boldsymbol{\alpha}}^2 \ge \sum_{\substack{\boldsymbol{\alpha}\in\N_0^d:\ip{\boldsymbol{1}_d}{\boldsymbol{\alpha}}=\ell}} \Theta_{\boldsymbol{\alpha}}^2,
    \end{equation}
    where $\boldsymbol{1}_d=(1,\dots,1)\in\R^d$. In the remainder of the proof, we 
     calculate $\Theta_{\boldsymbol{\alpha}}$ for $\boldsymbol{\alpha}\in\N_0^d$ with $\ip{\boldsymbol{1}_d}{\boldsymbol{\alpha}}=\ell$.  Fix $\mathcal{I}=(i_1,\dots,i_\ell)\in[d]^\ell$ and let
\begin{equation}\label{eq:T-I-X-I}
   \bT_{\mathcal{I}} := \sum_{1\le i\le r} \lambda_i \boldsymbol{v}_i(i_1)\cdots \boldsymbol{v}_i(i_\ell) \quad\text{and}\quad \bX_{\mathcal{I}}:= \bX(i_1)\cdots \bX(i_\ell),
\end{equation}
so that~\eqref{eq:F_TEN} yields 
\begin{equation}\label{eq:Tensor-Rep2}
    \langle \bT,\bX^{\otimes \ell}\rangle = \sum_{\mathcal{I}\in[d]^\ell}\bT_{\mathcal{I}}\bX_{\mathcal{I}}.
\end{equation}
Next, for any $\mathcal{I}\in[d]^\ell$, define 
\begin{equation}\label{eq:beta-of-I}
    \boldsymbol{\beta}_{\mathcal{I}} = (\beta_1,\dots,\beta_d)\in\N_0^d\quad\text{where}\quad \beta_j = |1\le k\le \ell:i_k = j|.
\end{equation}
The dependence of $\beta_j$ on $\mathcal{I}$ is suppressed for simplicity. Observe that we have $\ip{\boldsymbol{1}_d}{\boldsymbol{\beta}_{\mathcal{I}}}=\ell$ for all $ 
 \mathcal{I}\in[d]^\ell$. Moreover, for any $\mathcal{I},\mathcal{I}'$ with $\boldsymbol{\beta}_{\mathcal{I}}=\boldsymbol{\beta}_{\mathcal{I}'}$, 
 \begin{equation}\label{eq:T_i-eq-T-i-pr}
     \bT_{\mathcal{I}} = \sum_{1\le j\le r}\lambda_j \boldsymbol{v}_j(1)^{\beta_1}\cdots \boldsymbol{v}_j(d)^{\beta_d} =\bT_{\mathcal{I}'}.
  \end{equation}
 \begin{lemma}\label{lemma:Key-Ort-Poly}
     Given $\mathcal{I}\in[d]^\ell$ and an $\boldsymbol{\alpha}\in\N_0^d$ with $\ip{\boldsymbol{1}_d}{\boldsymbol{\alpha}}=\ell$, we have 
     \[
\mathbb{E}\bigl[\bX_{\mathcal{I}} \boldsymbol{P}_{\boldsymbol{\alpha}}(\bX)\bigr] =\ind\bigl\{\boldsymbol{\beta}_{\mathcal{I}}=\boldsymbol{\alpha}\bigr\}\cdot \prod_{1\le j\le d}\sqrt{\frac{D_{\alpha_j}}{D_{\alpha_j-1}}},
     \]
     where $\boldsymbol{\beta}_{\mathcal{I}}$ is defined in~\eqref{eq:beta-of-I}. Here, $D_{\alpha_j -1}:=1$ if $\alpha_j=0$.
 \end{lemma}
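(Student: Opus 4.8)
The plan is to exploit the product structure of both $\bX_{\mathcal{I}}$ and $\boldsymbol{P}_{\boldsymbol{\alpha}}$ together with the independence of the coordinates of $\bX\sim\mathcal{D}^{\otimes d}$. First I would rewrite $\bX_{\mathcal{I}} = \bX(i_1)\cdots\bX(i_\ell) = \prod_{1\le j\le d}\bX(j)^{\beta_j}$, where $\boldsymbol{\beta}_{\mathcal{I}} = (\beta_1,\dots,\beta_d)$ is exactly as in~\eqref{eq:beta-of-I}; this is just a regrouping of the $\ell$ factors by collecting equal indices. Since $\boldsymbol{P}_{\boldsymbol{\alpha}}(\bX) = \prod_{1\le j\le d} P_{\alpha_j}(\bX(j))$ by the definition in Proposition~\ref{Prop:Orthonormal}(c), and the $\bX(j)$ are i.i.d.\ copies of $X\sim\mathcal{D}$, the expectation factorizes across coordinates:
\[
\mathbb{E}\bigl[\bX_{\mathcal{I}}\,\boldsymbol{P}_{\boldsymbol{\alpha}}(\bX)\bigr] = \prod_{1\le j\le d}\mathbb{E}_{X\sim\mathcal{D}}\bigl[X^{\beta_j}P_{\alpha_j}(X)\bigr].
\]

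Next I would evaluate each one-dimensional factor via Proposition~\ref{Prop:Orthonormal}(b). If $\beta_j < \alpha_j$, then $\mathbb{E}_{X\sim\mathcal{D}}[X^{\beta_j}P_{\alpha_j}(X)] = 0$; if $\beta_j = \alpha_j \ge 1$, it equals $\sqrt{D_{\alpha_j}/D_{\alpha_j-1}}$; and if $\alpha_j = \beta_j = 0$ it equals $\mathbb{E}_{X\sim\mathcal{D}}[1] = 1$, which matches $\sqrt{D_{\alpha_j}/D_{\alpha_j-1}}$ under the stated convention $D_{-1}:=1$ together with $D_0 = m_0 = 1$. The only remaining possibility, $\beta_j > \alpha_j$, need not produce a vanishing factor on its own, but the global degree constraint resolves this: by hypothesis $\ip{\boldsymbol{1}_d}{\boldsymbol{\alpha}} = \ell$, and also $\ip{\boldsymbol{1}_d}{\boldsymbol{\beta}_{\mathcal{I}}} = \ell$ since each $\beta_j$ counts occurrences among $\ell$ indices. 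Hence if $\boldsymbol{\beta}_{\mathcal{I}} \ne \boldsymbol{\alpha}$, there must exist a coordinate $j$ with $\beta_j < \alpha_j$ — otherwise $\beta_j \ge \alpha_j$ for all $j$, which with the equal sums forces $\boldsymbol{\beta}_{\mathcal{I}} = \boldsymbol{\alpha}$ — and that coordinate contributes a zero factor, killing the whole product.

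Combining the two cases, the product is $0$ unless $\boldsymbol{\beta}_{\mathcal{I}} = \boldsymbol{\alpha}$, in which case every factor equals $\sqrt{D_{\alpha_j}/D_{\alpha_j-1}}$, yielding $\prod_{1\le j\le d}\sqrt{D_{\alpha_j}/D_{\alpha_j-1}}$; this is precisely the asserted identity. The only point that deserves to be written out carefully is the bookkeeping ruling out the case $\beta_j > \alpha_j$, which is handled purely by the two degree constraints; the rest is a direct application of independence and Proposition~\ref{Prop:Orthonormal}(b), so I do not anticipate any genuine obstacle in this lemma.
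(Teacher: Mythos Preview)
Your proposal is correct and follows essentially the same route as the paper: factorize the expectation over coordinates via independence, then apply Proposition~\ref{Prop:Orthonormal}(b) to each factor, using the equality of the two total degrees $\ip{\boldsymbol{1}_d}{\boldsymbol{\alpha}}=\ip{\boldsymbol{1}_d}{\boldsymbol{\beta}_{\mathcal{I}}}=\ell$ to force a coordinate with $\beta_j<\alpha_j$ whenever $\boldsymbol{\beta}_{\mathcal{I}}\ne\boldsymbol{\alpha}$. If anything, your write-up is more explicit than the paper's in handling the $\alpha_j=\beta_j=0$ factor and in spelling out why the case $\beta_j>\alpha_j$ is globally impossible without an accompanying coordinate where $\beta_j<\alpha_j$.
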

 \begin{proof}[Proof of Lemma~\ref{lemma:Key-Ort-Poly}] 
For $\boldsymbol{\beta}_{\mathcal{I}}=(\beta_1,\dots,\beta_d)$, we have $\bX_{\mathcal{I}} = \bX(1)^{\beta_1}\cdots \bX(d)^{\beta_d}$. Using the independence of $\bX(i),1\le i\le d$, we have:
 \begin{equation}\label{eq:X_i-times-H-alpha}
\mathbb{E}\bigl[\bX_{\mathcal{I}} \boldsymbol{P}_{\boldsymbol{\alpha}}(\bX)\bigr] = \prod_{1\le j\le d}\mathbb{E}_{X\sim \mathcal{D}}\left[X^{\beta_j}P_{\alpha_j}(X)\right].
  \end{equation}
 Suppose first that $\boldsymbol{\beta}_{\mathcal{I}}\ne \boldsymbol{\alpha}$. Since $\ip{\boldsymbol{1}_d}{\boldsymbol{\beta}_{\mathcal{I}}}=\ell$, there exists a $j$ such that $\beta_j<\alpha_j$. Then, Part $\mathrm{(b)}$ of Proposition~\ref{Prop:Orthonormal} implies that~\eqref{eq:X_i-times-H-alpha} evaluates to zero. Suppose next that $\boldsymbol{\beta}_{\mathcal{I}} = \boldsymbol{\alpha}$. Using Proposition~\ref{Prop:Orthonormal} again, we obtain
 \[
 \mathbb{E}_{X\sim \mathcal{D}}\left[X^{\beta_j}P_{\alpha_j}(X)\right] = \sqrt{\frac{D_{\alpha_j}}{D_{\alpha_j -1}}}
 \]
 Combining the last display with~\eqref{eq:X_i-times-H-alpha} establishes Lemma~\ref{lemma:Key-Ort-Poly}.
 \end{proof}
 We are ready to calculate $\Theta_{\boldsymbol{\alpha}}$. Given any $\boldsymbol{\alpha} \in \N_0^d$ with $\ip{\boldsymbol{1}_d}{\boldsymbol{\alpha}}=\ell$, there exists
\begin{equation}\label{eq:COUNT}
     N_{\boldsymbol{\alpha}}:=\frac{\ell!}{\alpha_1 ! \alpha_2 ! \cdots \alpha_d!}
 \end{equation}
values of $\mathcal{I}\in[d]^\ell$ such that $\boldsymbol{\beta}_{\mathcal{I}}=\boldsymbol{\alpha}$.   Using~\eqref{eq:T_i-eq-T-i-pr}, denote by $\bT_{\boldsymbol{\alpha}}$  the common value attained by $\bT_{\mathcal{I}}$ for any $\mathcal{I}$ with $\boldsymbol{\beta}_{\mathcal{I}}=\boldsymbol{\alpha}$. We have
\begin{align}
\Theta_{\boldsymbol{\alpha}}&=\mathbb{E}\left[\boldsymbol{P}_{\boldsymbol{\alpha}}(\bX)\sum_{\mathcal{I}\in[d]^\ell}\bT_{\mathcal{I}}\bX_{\mathcal{I}}\right]  \label{eq:USE-EXP-and-TENS}\\  &= \sum_{\substack{\mathcal{I}\in[d]^\ell \\ \boldsymbol{\beta}_{\mathcal{I}} = \boldsymbol{\alpha}}} \left(\prod_{1\le j\le d}\sqrt{\frac{D_{\alpha_j}}{D_{\alpha_j-1}}}\right)\bT_{\mathcal{I}} \label{eq:USE-LEMMA}\\
&=N_{\boldsymbol{\alpha}}\left(\prod_{1\le j\le d}\sqrt{\frac{D_{\alpha_j}}{D_{\alpha_j-1}}}\right)\bT_{\boldsymbol{\alpha}}\label{eq:USE-N-alpha},
\end{align}
where~\eqref{eq:USE-EXP-and-TENS} follows by combining~\eqref{eq:Theta-Coeff} and~\eqref{eq:Tensor-Rep2}, \eqref{eq:USE-LEMMA} follows from Lemma~\ref{lemma:Key-Ort-Poly}, and~\eqref{eq:USE-N-alpha} follows from~\eqref{eq:COUNT}. Combining~\eqref{eq:LOW-BD} and~\eqref{eq:USE-N-alpha} then yields 
\begin{align}
    \mathbb{E}\Bigl[\langle \bT,\bX^{\otimes \ell}\rangle^2\Bigr] &\ge \sum_{\substack{\boldsymbol{\alpha}\in\N_0^d:\ip{\boldsymbol{1}_d}{\boldsymbol{\alpha}}=\ell}} N_{\boldsymbol{\alpha}}^2 \left(\prod_{1\le j\le d}\frac{D_{\alpha_j}}{D_{\alpha_j-1}}\right)
\bT_{\boldsymbol{\alpha}}^2 \nonumber \\
&\ge \Xi \sum_{\substack{\boldsymbol{\alpha}\in\N_0^d:\ip{\boldsymbol{1}_d}{\boldsymbol{\alpha}}=\ell}} N_{\boldsymbol{\alpha}}^2 
\bT_{\boldsymbol{\alpha}}^2, \label{eq:LOW-BD-2}
\end{align}
where 
\begin{equation}\label{eq:XI-CONST}
    \Xi = \mathcal{C}(\mathcal{D},\ell)^d\quad\text{and}\quad \mathcal{C}(\mathcal{D},\ell) = \frac{\min_{0\le i\le \ell}D_i}{\max\{\max_{0\le i\le \ell} D_i,1\}}.
\end{equation}
Part $\mathrm{(a)}$ of Proposition~\ref{Prop:Orthonormal} ensures that $0<D_i<\infty$ for all $i$. Consequently, $\mathcal{C}(\mathcal{D},\ell)>0$ is a finite constant depending solely on $\ell$ and the moments $m_i$ of $\mathcal{D}$.

At the same time, 
\[
\|\bT\|_F^2 = \sum_{\substack{\boldsymbol{\alpha}\in\N_0^d:\ip{\boldsymbol{1}_d}{\boldsymbol{\alpha}}=\ell}} N_{\boldsymbol{\alpha}} T_{\boldsymbol{\alpha}}^2.
\]
Combining the last display with~\eqref{eq:LOW-BD-2}, we obtain
\[
\mathbb{E}\bigl[\langle \bT,\bX^{\otimes \ell}\rangle^2\bigr] \ge \Xi \cdot \|\bT\|_F^2,
\]
establishing Proposition~\ref{prop:f-Expand}.
\end{proof}

\section{Packing Numbers for Tensors: Proof of Proposition~\ref{prop:TENSOR_PACKING}}\label{sec:Pf-Packing}
The following result is essentially a variant of the Gilbert-Varshamov bound from coding theory~\cite{gilbert1952comparison,varshamov1957estimate}. 
\begin{lemma}\label{lemma:GV}
There exists $d_0$ and a universal constant $c>0$ such that the following holds. Fix any $d\ge d_0$ and $\epsilon>0$. Then, there exists a set $\{v_1',\dots,v_N'\}\subset \{\pm 1\}^d$ such that
    \begin{itemize}
        \item $N=e^{cd\epsilon^2}$.
        \item For every $1\le i<j\le N$, $\frac1d\bigl|\ip{v_i'}{v_j'}\bigr|\le \epsilon$.
    \end{itemize}
\end{lemma}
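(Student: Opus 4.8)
The plan is to prove Lemma~\ref{lemma:GV} via the probabilistic method, following the classical Gilbert--Varshamov paradigm. First I would sample $v_1', v_2', \dots$ independently and uniformly from $\{\pm 1\}^d$ — equivalently, each coordinate is an independent Rademacher variable. For a fixed pair $i \ne j$, the inner product $\ip{v_i'}{v_j'} = \sum_{k=1}^d v_i'(k) v_j'(k)$ is a sum of $d$ i.i.d.\ Rademacher random variables (the products $v_i'(k)v_j'(k)$ are themselves Rademacher and independent across $k$). Hence by Hoeffding's inequality,
\[
\mathbb{P}\left[\,\bigl|\ip{v_i'}{v_j'}\bigr| > \epsilon d\,\right] \le 2\exp\!\left(-\frac{\epsilon^2 d}{2}\right).
\]

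Next I would run a union bound over all $\binom{M}{2} \le M^2/2$ pairs, where $M$ is the number of vectors drawn. If $M^2 \exp(-\epsilon^2 d/2) < 1$, i.e.\ $M < \exp(\epsilon^2 d/4)$, then with positive probability \emph{every} pair satisfies $\frac1d|\ip{v_i'}{v_j'}| \le \epsilon$, so a valid configuration of size $M$ exists. Choosing $c = 1/4$ (or any constant slightly smaller to absorb the factor of $2$ and rounding, which forces the introduction of $d_0$) gives $N = e^{c d \epsilon^2}$ vectors with the desired near-orthogonality property. One minor bookkeeping point: one should take $N = \lfloor e^{cd\epsilon^2}\rfloor$ or pass to a slightly smaller constant so that the strict inequality $N^2 \exp(-\epsilon^2 d/2) < 1$ holds for all $d \ge d_0$; this is exactly the role of the threshold $d_0$ in the statement, and is where the "$\epsilon > 0$ arbitrary" hypothesis is harmless since the bound is uniform in $\epsilon$.

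There is essentially no hard obstacle here — the argument is entirely standard. The only thing requiring a modicum of care is the concentration estimate: one must note that, although the coordinates of a \emph{single} vector $v_i'$ are independent, what matters is that for a \emph{fixed pair}, the coordinatewise products $\{v_i'(k) v_j'(k)\}_{k \in [d]}$ form an i.i.d.\ Rademacher sequence, so a clean Hoeffding (or Chernoff) bound applies pairwise; one does not need any joint independence across the $\binom{M}{2}$ events, since the union bound handles dependence for free. A secondary point is simply to track constants carefully enough to land on the clean form $N = e^{cd\epsilon^2}$ with a universal $c$, which may require shrinking $c$ below $1/4$ to swallow the constant $2$ and the $\binom{M}{2}$ versus $M^2$ slack; this is where invoking a sufficiently large $d_0$ makes the statement go through cleanly.

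I would then remark that this is precisely the form in which the Gilbert--Varshamov bound will be applied in the proof of Proposition~\ref{prop:TENSOR_PACKING}: the vectors $v_i'$ will serve as (scaled) rank-one components, and the near-orthogonality $\frac1d|\ip{v_i'}{v_j'}| \le \epsilon$ translates, after raising to the $\ell$-th power and combining components, into a lower bound on the Frobenius distance between the resulting low-rank tensors. This motivates stating the lemma in the normalized form $\frac1d|\ip{v_i'}{v_j'}|$ rather than the raw inner product.
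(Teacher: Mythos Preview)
Your proposal is correct and matches the paper's proof essentially line for line: the paper also draws the $v_i'$ with i.i.d.\ Rademacher coordinates, applies a concentration bound (it cites Bernstein rather than Hoeffding, but the resulting $\exp(-Cd\epsilon^2)$ tail is the same here), and union-bounds over $\binom{N}{2}$ pairs with $N=e^{cd\epsilon^2}$ for $c<C/2$. Your remarks on the role of $d_0$ and the downstream use in Proposition~\ref{prop:TENSOR_PACKING} are also on point.
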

\begin{proof}[Proof of Lemma~\ref{lemma:GV}]
  Our proof is based on the probabilistic method~\cite{alon2016probabilistic}. Suppose $v_i'(j)$, $i\in[N]$ and $j\in[d]$ are i.i.d.\,Rademacher variables:
  \[
  \mathbb{P}\bigl[v_i'(j)=1\bigr] = \mathbb{P}\bigl[v_i'(j)=-1\bigr] = \frac12.
  \]
  Using Bernstein's inequality, it holds that for some universal constant $C>0$,
\[
\mathbb{P}\left[\frac1d\left|\ip{v_i'}{v_j'}\right|> \epsilon\right]\le \exp\left(-Cd\epsilon^2\right).
\]
Suppose $N = e^{cd\epsilon^2}$, where $c<C/2$. Taking a union bound over $1\le i<j\le N$, we obtain
\[
\mathbb{P}\left[\exists 1\le i<j\le N:\frac1d\left|\ip{v_i'}{v_j'}\right|>\epsilon\right]\le \binom{N}{2}e^{-Cd\epsilon^2} \le \exp\left((2c-C)d\epsilon^2\right).
\]
As $2c-C<0$, we thus have
\[
\mathbb{P}\left[\frac1d\left|\ip{v_i'}{v_j'}\right|\le \epsilon,\forall 1\le i<j\le N\right]>0,
\]
establishing Lemma~\ref{lemma:GV}.
\end{proof}
Equipped with Lemma~\ref{lemma:GV}, we are ready to establish Proposition~\ref{prop:TENSOR_PACKING}.

\begin{proof}[Proof of Proposition~\ref{prop:TENSOR_PACKING}]
We first apply Lemma~\ref{lemma:GV} with $\epsilon = r^{-0.01}$ to obtain the set $\{v_1',\dots,v_N'\}$, where $N=\exp(cdr^{-0.02})$. Next, let $v_i = v_i'/\sqrt{d}$, $1\le i\le N$,  and consider
\[
\Psi = \left\{\bT_S = \sum_{i\in S}v_i^{\otimes \ell} : S\subset [N],|S|=r\right\}.
\]
We prove that $\Psi$ satisfies the desired conditions. 
\paragraph{Cardinality Bound} Since $r=o(d^{50}/\log^{50} d)$, we have
\[
r^{\frac{1}{50}}\log r = o\left(\frac{d}{\log d}\right)\cdot O(\log d) = o(d).
\]
Consequently, $r\log r = o(dr^{0.98})$. With this, we obtain
\begin{align*}
    |\Psi| =  \binom{\exp(cdr^{-0.02})}{r}&\ge \left(\frac{\exp\bigl(cdr^{-0.02}\bigr)}{r}\right)^r = \exp\left(cdr^{0.98} - r\log r\right)=e^{\Omega(dr^{0.98})}.
\end{align*}
\paragraph{Frobenius Norm Estimate} Fix distinct $S,S'\subset[N]$ with $|S|=|S'|=r$ and suppose that $|S\cap S'|=r-k$ for some $1\le k\le r$ (note that $k\ge 1$ as $S\ne S'$). Let $S\setminus S' =\{q_1,\dots,q_k\}$ and $S'\setminus S = \{r_1,\dots,r_k\}$ where $\{q_1,\dots,q_k,r_1,\dots,r_k\}$ consists of $2k$ distinct vectors and $q_i,r_i\in \{v_1,\dots,v_N\}$. We write
\[
\bT_S - \bT_{S'} = \sum_{1\le i\le k}q_i^{\otimes \ell} - \sum_{1\le i\le k}r_i^{\otimes \ell}.
\]
With this, we have 
\begin{align}
    &\|\bT_S - \bT_{S'}\|_F^2 \nonumber\\ 
    &= \sum_{i_1,\dots,i_\ell \in[d]} \left(\sum_{1\le i\le k}q_i(i_1)\cdots q_i(i_\ell) - \sum_{1\le i\le k} r_i(i_1)\cdots r_i(i_\ell)\right)^2 \nonumber\\
    &=\sum_{i_1,\dots,i_\ell \in[d]} \left[\left(\sum_{1\le i\le k}q_i(i_1)\cdots q_i(i_\ell)\right)^2 - 2\left(\sum_{1\le i\le k}q_i(i_1)\cdots q_i(i_\ell)\right)\left(\sum_{1\le i\le k}r_i(i_1)\cdots r_i(i_\ell)\right) \right. \nonumber \\
    &+ \left. \left(\sum_{1\le i\le k}r_i(i_1)\cdots r_i(i_\ell)\right)^2\right].\label{eq:TFB}
\end{align}
We next calculate each term in~\eqref{eq:TFB}. First, we have
\begin{align}
&\sum_{i_1,\dots,i_\ell\in[d]} \left(\sum_{1\le i\le k}q_i(i_1)\cdots q_i(i_\ell)\right)^2  \nonumber \\
    &=\sum_{i_1,\dots,i_\ell \in[d]} \sum_{1\le i\le k}q_i(i_1)^2 \cdots q_i(i_\ell)^2 + 2\sum_{i_1,\dots,i_\ell \in[d]}\sum_{i\ne i'}q_i(i_1)q_{i'}(i_1)\cdots q_i(i_\ell) q_{i'}(i_\ell) \nonumber\\
    &=\sum_{1\le i\le k}\|q_i\|_2^{2\ell} + \sum_{i\ne i'}\ip{q_i}{q_{i'}}^\ell, \label{eq:TFB-2}
\end{align}
where the last step follows by switching the order of summations. Similarly, 
\begin{equation}\label{eq:TFB-3}
    \sum_{i_1,\dots,i_\ell\in[d]} \left(\sum_{1\le i\le k}r_i(i_1)\cdots r_i(i_k)\right)^2 = \sum_{1\le i\le k}\|r_i\|_2^{2\ell} + \sum_{i\ne i'}\ip{r_i}{r_{i'}}^\ell.
\end{equation}
Moreover,
\begin{align}
   &\sum_{i_1,\dots,i_\ell\in[d]} \left(\sum_{1\le i\le k}q_i(i_1)\cdots q_i(i_\ell)\right)\left(\sum_{1\le i\le k}r_i(i_1)\cdots r_i(i_\ell)\right) \nonumber  \\
&=\sum_{1\le i,j\le k}\sum_{i_1,\dots,i_\ell\in[d]}q_i(i_i)r_j(i_1)\cdots q_i(i_\ell)r_j(i_\ell)\nonumber \\
   &=\sum_{1\le i,j\le k}\ip{q_i}{r_j}^\ell.\label{eq:TFB-4}
\end{align}
We now combine~\eqref{eq:TFB},~\eqref{eq:TFB-2},~\eqref{eq:TFB-3} and~\eqref{eq:TFB-4} to arrive at
\begin{align}
    \|\bT_S-\bT_{S'}\|_F^2 
    &= \sum_{1\le i\le k}\|q_i\|_2^{2\ell} + \sum_{i\ne i'}\ip{q_i}{q_{i'}}^\ell + \sum_{1\le i\le k}\|r_i\|_2^{2\ell}\nonumber + \sum_{i\ne i'}\ip{r_i}{r_{i'}}^{2\ell} +\sum_{1\le i,j\le k}\ip{q_i}{r_j}^\ell\nonumber \\
    &=2k +\sum_{i\ne i'}\ip{q_i}{q_{i'}}^\ell + \sum_{i\ne i'}\ip{r_i}{r_{i'}}^\ell + \sum_{1\le i,j\le k}\ip{q_i}{r_j}^\ell \label{eq:USE-NORMS} \\
    &\ge 2k - 3k^2 r^{-0.01\ell} \label{eq:USE-TR-IN},
\end{align}
where~\eqref{eq:USE-NORMS} follows by recalling that $q_i,r_j\in\{v_1,\dots,v_N\}$ so that $\|q_i\|_2= \|r_j\|_2=1$ and~\eqref{eq:USE-TR-IN} follows from the fact
\[
|\ip{q_i}{q_{i'}}|\le r^{-0.01},\quad |\ip{r_i}{r_{i'}}|\le r^{-0.01},\quad |\ip{q_i}{r_j}|\le r^{-0.01}
\]
for any $1\le i<i'\le k$ and any $1\le i,j\le k$ together with the triangle inequality. Combining~\eqref{eq:USE-TR-IN} together with $\ell\ge 101$ and $k\ge 1$, we obtain
\[
\|\bT_S-\bT_{S'}\|_F^2 = 2k\left(1-\frac32 kr^{-0.01\ell}\right)\ge 2\left(1-\frac32 r^{-\frac{1}{100}}\right)\ge 1,
\]
for all large enough $r$. 

We lastly check the integrality of entries and $\max_{\bT\in \Psi}\|\bT\|_F\le r$. The fact $d^{\ell/2}\bT\in\mathbb{Z}^{d\times \cdots \times d}$ is clear. Moreover, for any $\bT_S\in \Psi$, we have
\[
\|\bT_S\|_F \le \left\|\sum_{i\in S}v_i^{\otimes \ell}\right\|_F\le \sum_{i\in S}\|v_i^{\otimes \ell}\|_F=r,
\]
where the last step follows from the fact $|S|=r$ and $\|v_i^{\otimes \ell}\|_F = \|v_i\|_2^\ell = 1$. This completes the proof of Proposition~\ref{prop:TENSOR_PACKING}.
\end{proof}
\section{Sample-Complexity Lower Bound: Proof of Theorem~\ref{thm:FANO-CONV}}\label{pf:FANO}
\begin{proof}[Proof of Theorem~\ref{thm:FANO-CONV}]
     Suppose that $\boldsymbol{Y}^{(N)}$ denotes $(Y_1,\dots,Y_N)$ and $\bX^{(N)}$  denotes the collection $\bX_1,\dots,\bX_N\in\R^d$, where $Y_i = \ip{\bT^*}{\bX_i^{\otimes \ell}}$. We first notice 
    \begin{align*}
        |Y_i| &= \bigl|\ip{\bT^*}{\bX_i^{\otimes \ell}}\bigr|\le \|\bT^*\|_F \cdot \|\bX_i^{\otimes \ell}\|_F \le r(B\sqrt{d})^\ell,
    \end{align*}
    using Cauchy-Schwarz inequality, the fact $\|\bT^*\|_F\le r$ since $\bT^*\sim \mathrm{Uniform}(\Psi)$ and $\max_{\bT\in \Psi}\|\bT\|_F\le r$ per Proposition~\ref{prop:TENSOR_PACKING}, as well as the fact
    \[
    \|\bX_i^{\otimes \ell}\|_F =\sqrt{\sum_{i_1,\dots,i_\ell\in [d]} \bX_i(i_1)^2 \cdots \bX_i(i_\ell)^2} = \|\bX_i\|_2^\ell \le (B\sqrt{d})^\ell.
    \]
    Furthermore, due to Proposition~\ref{prop:TENSOR_PACKING} and the fact that $\bX_i$ are integer-valued
    \[
d^{\ell/2}\boldsymbol{Y}^{(N)}\in [-r(Bd)^\ell,r(Bd)^\ell]^N \cap \mathbb{Z}^N.
    \]
    Thus, 
\begin{equation}\label{eq:H-of-YN}
H\bigl(\boldsymbol{Y}^{(N)}\bigr) =    H\left(d^{\ell/2}\boldsymbol{Y}^{(N)}\right) \le \log\left(\left(2r(Bd)^\ell+1\right)^N\right) \le N\log\left(3r(Bd)^\ell\right),
    \end{equation}
    since the uniform distribution maximizes the entropy. 
    
    Using Fano's inequality~\cite[Theorem~2.10.1]{thomas2006elements}, we obtain
\begin{equation}\label{eq:FANO-LB}    \mathbb{P}\bigl[\widehat{\bT}\ne \bT^*\bigr]\ge \frac{H\bigl(\bT^*|\boldsymbol{Y}^{(N)},\bX^{(N)}\bigr) -1}{\log |\Psi|}.
    \end{equation}
    Next, the chain rule for conditional entropy yields
\begin{align}
H\bigl(\bT^*,\boldsymbol{Y}^{(N)}|\bX^{(N)}\bigr) &= H\bigl(\bT^*|\boldsymbol{Y}^{(N)},\bX^{(N)}\bigr) + H\bigl(\boldsymbol{Y}^{(N)}| \bX^{(N)}\bigr)\nonumber \\
&\le H\bigl(\bT^*|\boldsymbol{Y}^{(N)},\bX^{(N)}\bigr) + H\bigl(\boldsymbol{Y}^{(N)}\bigr)\label{eq:COND-REDUCE-ENT} \\
&\le H\bigl(\bT^*|\boldsymbol{Y}^{(N)},\bX^{(N)}\bigr) + N\log\left(3r(Bd)^\ell\right)\label{UNIF-BD}
\end{align}
where~\eqref{eq:COND-REDUCE-ENT} is a consequence of the fact conditioning reduces entropy and~\eqref{UNIF-BD} follows from~\eqref{eq:H-of-YN}. Furthermore, another application of chain rule yields
\begin{equation}\label{eq:CHAIN-2}
H\bigl(\bT^*,\boldsymbol{Y}^{(N)}|\bX^{(N)}\bigr)=H\bigl(\boldsymbol{Y}^{(N)}|\bT^*,\bX^{(N)}\bigr) + H\bigl(\bT^*|\bX^{(N)}\bigr) = H\bigl(\bT^*\bigr)=\log|\Psi|,
\end{equation}
using the fact $Y_i$ is a function of $\bT^*$ and $\bX_i$ only, so that $H\bigl(\boldsymbol{Y}^{(N)}|\bT^*,\bX^{(N)}\bigr)=0$ and $H(\bT^* | \bX^{(N)})=H(\bT^*)$ as $\bT^*$ and $\bX^{(N)}$ are independent. Combining~\eqref{eq:FANO-LB},~\eqref{UNIF-BD} and~\eqref{eq:CHAIN-2} and using the fact $\log|\Psi| =\Omega(dr^{0.98})$ so that $\log^{-1}|\Psi|=o_{d,r}(1)$, we obtain
\begin{align*}
\mathbb{P}\bigl[\widehat{\bT}\ne \bT^*\bigr]&\ge 1 - \frac{N\left(\log 3r + \ell \log (Bd)\right)}{\log|\Psi|} - o_{d,r}(1)\\
&\ge 1 - O\left(\frac{N\left(\log r + \ell \log (Bd)\right)}{dr^{0.98}}\right) - o_{d,r}(1) \\
&\ge \delta -o_{d,r}(1)
\end{align*}
if $N=O\left(dr^{0.98}/(\log r + \ell \log (Bd))\right)$. This completes the proof of Theorem~\ref{thm:FANO-CONV}.
\end{proof}
\section{Proof of Theorem~\ref{thm:ERM-Sample}}\label{pf:ERM_SAMPLE}
Denote by $\mathcal{S}_\ell$ the set of all symmetric order-$\ell$ tensors. A standard balls and bins argument shows that $\mathrm{dim}(\mathcal{S}_\ell) = \binom{d+\ell-1}{\ell}=N_{d,\ell}^*$, see e.g.~\cite[Proposition~3.4] {comon2008symmetric}. Fix an arbitrary order on $\mathcal{J}:= \bigl\{\boldsymbol{\alpha}\in\mathbb{N}_0^d:\alpha_1+\cdots+\alpha_d=\ell\bigr\}$ and consider the matrix $\M\in\R^{N\times N_{d,\ell}^*}$ with rows $\M_i$, where
\[
\M_i = \Bigl[\bX_i(1)^{\alpha_1}\cdots \bX_i(d)^{\alpha_d}:\boldsymbol{\alpha}=(\alpha_1,\dots,\alpha_d)\in\mathcal{J}\Bigr]\in\R^{N_{d,\ell}^*}.
\]
\subsection{Part $\mathrm{(a)}$} Suppose that $N\ge N_{d,\ell}^*$ and take any $\bT\in\mathcal{S}_\ell$ such that $\mathcal{L}_S(\bT)=0$. In particular, $\langle \bT,\bX_i^{\otimes \ell} \rangle= \langle \bT^*,\bX_i^{\otimes \ell}\rangle$ for all $i\in[N]$, implying 
\[
\M \bT_{\Delta}=\boldsymbol{0}\in\R^N,\quad\text{where}\quad \bT_{\Delta} = \bT - \bT^*.
\]
We will establish that $\mathrm{ker}(\M)=\{\boldsymbol{0}\}$. Consider $\M' \in\R^{N_{d,\ell}^*\times N_{d,\ell}^*}$ obtained by retaining the first $N_{d,\ell}^*$ rows  of $\M$. It is not hard to see that $|\mathcal{M}'|$ is a polynomial of continuous variables $\bX_i,i\in[N]$ that is not identically zero. This establishes $\mathbb{P}[|\mathcal{M}'|\ne 0]=1$, using standard results, e.g.,~\cite{caron2005zero}.
\subsection{Part $\mathrm{(b)}$}
Notice that since $N<N_{d,\ell}^*$, we have by the rank-nullity theorem~\cite{horn2012matrix} that there exists a $\boldsymbol{0}\ne \bar{\bT}\in\mathcal{S}_\ell$ such that $\M \bar{\bT}= \boldsymbol{0}\in\R^N$. Consider now 
\begin{equation}\label{eq:T-lambda}
    \bT(\lambda) = \bT^* + \lambda \bar{\bT}.
\end{equation}
We have that $\mathcal{L}_S(\bT(\lambda)) = 0,\forall \lambda$. Furthermore, for 
\[
\mathcal{L}'(\bT):= \mathbb{E}_{\bX\sim \mathcal{D}^{\otimes d}}\bigl[\bigl(\langle \bT,\bX^{\otimes \ell}\rangle - \langle \bT^*,\bX^{\otimes \ell}\rangle\bigr)^2\bigr],
\]
we have \[
\mathcal{L}'(\bT(\lambda)) = \lambda^2 \mathbb{E}_{\bX\sim \mathcal{D}^{\otimes d}}\bigl[\langle \bar{\bT},\bX^{\otimes \ell}\rangle^2\bigr].
\]As $\bar{\bT}\ne 0$, we have $\mathbb{E}_{\bX\sim \mathcal{D}^{\otimes d}}\bigl[\langle \bar{\bT},\bX^{\otimes \ell}\rangle^2\bigr]>0$; so $\mathcal{L}'(\bT(\lambda))\ge M$ provided $\lambda$ is sufficiently large.

It thus suffices to show that $\bT(\lambda)$ admits a decomposition 
\[
\bT(\lambda) = \sum_{1\le j\le m}c_j \boldsymbol{v}_j^{\otimes \ell}
\]
for some $m\in\mathbb{N}$, $c_1,\dots,c_m\in\R$ and $\boldsymbol{v}_1,\dots,\boldsymbol{v}_m\in\R^d$. Fix indices $i_1,\dots,i_\ell \in[d]$ and define
\begin{equation}\label{eq:O-dot}
    e_{i_1}\odot e_{i_2}\odot \cdots \odot e_{i_\ell} = \frac{1}{\ell!} \sum_{\pi\in K_\ell}e_{i_{\pi(1)}} \otimes \cdots \otimes e_{i_{\pi(\ell)}},
\end{equation}
where $K_\ell$ is the set of all permutations $\pi:[\ell]\to[\ell]$. The following is a well-known fact:
\begin{proposition}{\cite[Proposition~3.4]{comon2008symmetric}}\label{prop:Basis}
    The set
    \[
    \bigl\{e_{i_1}\otimes \cdots \otimes e_{i_\ell} : 1\le i_1\le \cdots \le i_\ell \le d\bigr\}
    \]
    is a basis for $\mathcal{S}_\ell$.
\end{proposition}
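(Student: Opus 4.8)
The plan is to prove this standard linear-algebra fact in the usual two steps: show that the indicated symmetrized products span $\mathcal{S}_\ell$, and show that they are linearly independent. Since the excerpt has already recorded $\dim \mathcal{S}_\ell = \binom{d+\ell-1}{\ell}$, which is precisely the number of non-decreasing $\ell$-tuples $1\le i_1\le\cdots\le i_\ell\le d$, either step alone would in fact suffice, but I would carry out both to keep the argument self-contained. Throughout I interpret the elements in the statement as the symmetrized tensors $e_{i_1}\odot\cdots\odot e_{i_\ell}$ from~\eqref{eq:O-dot} (the literal tensor products $e_{i_1}\otimes\cdots\otimes e_{i_\ell}$ are not symmetric, so this is the intended reading, matching~\cite[Prop.~3.4]{comon2008symmetric}); each such tensor is symmetric by construction, being an average over all of $K_\ell$, so it indeed lies in $\mathcal{S}_\ell$.

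For spanning, I would begin from the expansion of an arbitrary $\bT\in\mathcal{S}_\ell$ in the full tensor basis, $\bT=\sum_{(j_1,\dots,j_\ell)\in[d]^\ell}\bT_{j_1,\dots,j_\ell}\, e_{j_1}\otimes\cdots\otimes e_{j_\ell}$, and then partition the index set $[d]^\ell$ into orbits under the coordinate-permutation action of $K_\ell$. Each orbit has a unique non-decreasing representative $(i_1,\dots,i_\ell)$, equivalently is encoded by a multiplicity vector $\boldsymbol\alpha=(\alpha_1,\dots,\alpha_d)$ with $\sum_k\alpha_k=\ell$. By the symmetry $\bT_{j_1,\dots,j_\ell}=\bT_{j_{\pi(1)},\dots,j_{\pi(\ell)}}$, the coefficient $\bT_{j_1,\dots,j_\ell}$ is constant on each orbit; denote the common value $\bT_{\boldsymbol\alpha}$. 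By orbit–stabilizer, the orbit has $\ell!/(\alpha_1!\cdots\alpha_d!)$ elements, each appearing $\alpha_1!\cdots\alpha_d!$ times in the average defining~\eqref{eq:O-dot}, so the orbit sum of tensor monomials equals $\frac{\ell!}{\alpha_1!\cdots\alpha_d!}\, e_{i_1}\odot\cdots\odot e_{i_\ell}$. Collecting terms expresses $\bT$ as a linear combination of the claimed family.

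For linear independence, I would use that distinct non-decreasing tuples have \emph{disjoint} $K_\ell$-orbits, so the arrays $e_{i_1}\odot\cdots\odot e_{i_\ell}$ indexed by $[d]^\ell$ have pairwise disjoint supports (the support of each being exactly its orbit). Hence if a linear combination of them vanishes, evaluating the resulting array at any one coordinate in a fixed orbit isolates the corresponding coefficient up to a nonzero multiplicative constant, forcing it to be zero. Finally, a balls-and-bins count gives that the number of non-decreasing $\ell$-tuples from $[d]$ equals $\binom{d+\ell-1}{\ell}=\dim\mathcal{S}_\ell$, so this linearly independent spanning set is a basis. The only real work is bookkeeping — tracking stabilizer and orbit sizes so the symmetrization normalizations in~\eqref{eq:O-dot} come out correctly, and stating "disjoint orbits $\Rightarrow$ disjoint supports" cleanly; there is no genuine obstacle, which is why the excerpt simply cites~\cite{comon2008symmetric}.
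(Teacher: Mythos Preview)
Your proposal is correct and is the standard argument; the paper itself gives no proof at all, merely citing \cite[Proposition~3.4]{comon2008symmetric}, so there is nothing to compare against beyond noting that your sketch is precisely the textbook route that reference records. Your observation that the displayed $\otimes$ should be read as the symmetrized product $\odot$ of~\eqref{eq:O-dot} is also accurate and worth flagging.
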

Using Proposition~\ref{prop:Basis}, we thus obtain
\[
\bT(\lambda) = \sum_{1\le i_1\le \cdots \le i_\ell \le d}\bigl(\bT(\lambda)\bigr)_{i_1,\dots,i_\ell} e_{i_1}\odot \cdots \odot e_{i_\ell}.
\]
Using the polarization identity~\cite[Equation~A.4]{thomas2014polarization} we obtain that there exists $\boldsymbol{v}_j^{(i_1,\dots,i_\ell)}\in\R^d$, $1\le j\le 2^\ell$ and signs $\epsilon_j^{(i_1,\dots,i_\ell)}\in\{-1,1\}$ such that
\[
e_{i_1}\odot \cdots \odot e_{i_\ell} = \sum_{1\le j\le 2^\ell}\epsilon_j^{(i_1,\dots,i_\ell)} \left(\boldsymbol{v}_j^{(i_1,\dots,i_\ell)}\right)^{\otimes \ell}.
\]
Combining the last two displays, we have
\[
\bT(\lambda) = \sum_{1\le i_1\le \cdots \le i_\ell \le d} \sum_{1\le j\le 2^\ell}\bigl(\bT(\lambda)\bigr)_{i_1,\dots,i_\ell} \epsilon_j^{(i_1,\dots,i_\ell)} \left(\boldsymbol{v}_j^{(i_1,\dots,i_\ell)}\right)^{\otimes \ell},
\]
establishing that $\mathrm{rank}_S(\bT(\lambda))\le (2d)^\ell$ for all $\lambda$. This completes the proof.
\subsubsection*{Acknowledgments}
I would like to thank anonymous ALT 2025 referees and the area chair for their valuable feedback which helped improve the presentation and strengthen the results.
\bibliographystyle{amsalpha}
\bibliography{erenbiblio}

\newcommand{\etalchar}[1]{$^{#1}$}
\providecommand{\bysame}{\leavevmode\hbox to3em{\hrulefill}\thinspace}
\providecommand{\MR}{\relax\ifhmode\unskip\space\fi MR }
% \MRhref is called by the amsart/book/proc definition of \MR.
\providecommand{\MRhref}[2]{%
  \href{http://www.ams.org/mathscinet-getitem?mr=#1}{#2}
}
\providecommand{\href}[2]{#2}
\begin{thebibliography}{SMVEZ20}

\bibitem[ABKZ20]{abbara2020universality}
Alia Abbara, Antoine Baker, Florent Krzakala, and Lenka Zdeborov{\'a}, \emph{On the universality of noiseless linear estimation with respect to the measurement matrix}, Journal of Physics A: Mathematical and Theoretical \textbf{53} (2020), no.~16, 164001.

\bibitem[ARB20]{ahmed2020tensor}
Talal Ahmed, Haroon Raja, and Waheed~U Bajwa, \emph{Tensor regression using low-rank and sparse tucker decompositions}, SIAM Journal on Mathematics of Data Science \textbf{2} (2020), no.~4, 944--966.

\bibitem[Arg12]{MSC-Post}
Martin Argerami, \emph{Orthonormal basis for product $l^2$ space}, Mathematics Stack Exchange, 2012, \url{https://math.stackexchange.com/q/105486} (version: 2024-07-30).

\bibitem[AS16]{alon2016probabilistic}
Noga Alon and Joel~H Spencer, \emph{The probabilistic method}, John Wiley \& Sons, 2016.

\bibitem[AXY24]{auddy2024tensor}
Arnab Auddy, Dong Xia, and Ming Yuan, \emph{Tensor methods in high dimensional data analysis: Opportunities and challenges}, arXiv preprint arXiv:2405.18412 (2024).

\bibitem[BB06]{bagnoli2006log}
Mark Bagnoli and Ted Bergstrom, \emph{Log-concave probability and its applications}, Rationality and Equilibrium: A Symposium in Honor of Marcel K. Richter, Springer, 2006, pp.~217--241.

\bibitem[BC15]{bobkov2015concentration}
Sergey~G Bobkov and Gennadiy~P Chistyakov, \emph{On concentration functions of random variables}, Journal of Theoretical Probability \textbf{28} (2015), no.~3, 976--988.

\bibitem[BJKK17]{bhatia2017consistent}
Kush Bhatia, Prateek Jain, Parameswaran Kamalaruban, and Purushottam Kar, \emph{Consistent robust regression}, Advances in Neural Information Processing Systems \textbf{30} (2017).

\bibitem[BKBY18]{basu2018iterative}
Sumanta Basu, Karl Kumbier, James~B Brown, and Bin Yu, \emph{Iterative random forests to discover predictive and stable high-order interactions}, Proceedings of the National Academy of Sciences \textbf{115} (2018), no.~8, 1943--1948.

\bibitem[BMR21]{bartlett2021deep}
Peter~L Bartlett, Andrea Montanari, and Alexander Rakhlin, \emph{Deep learning: a statistical viewpoint}, Acta numerica \textbf{30} (2021), 87--201.

\bibitem[BMV{\etalchar{+}}18]{banks2018information}
Jess Banks, Cristopher Moore, Roman Vershynin, Nicolas Verzelen, and Jiaming Xu, \emph{Information-theoretic bounds and phase transitions in clustering, sparse pca, and submatrix localization}, IEEE Transactions on Information Theory \textbf{64} (2018), no.~7, 4872--4894.

\bibitem[BS05]{beckmann2005tensorial}
Christian~F Beckmann and Stephen~M Smith, \emph{Tensorial extensions of independent component analysis for multisubject fmri analysis}, Neuroimage \textbf{25} (2005), no.~1, 294--311.

\bibitem[BTT13]{bien2013lasso}
Jacob Bien, Jonathan Taylor, and Robert Tibshirani, \emph{A lasso for hierarchical interactions}, Annals of statistics \textbf{41} (2013), no.~3, 1111.

\bibitem[BTY{\etalchar{+}}21]{bi2021tensors}
Xuan Bi, Xiwei Tang, Yubai Yuan, Yanqing Zhang, and Annie Qu, \emph{Tensors in statistics}, Annual review of statistics and its application \textbf{8} (2021), no.~1, 345--368.

\bibitem[CCS10]{cai2010singular}
Jian-Feng Cai, Emmanuel~J Cand{\`e}s, and Zuowei Shen, \emph{A singular value thresholding algorithm for matrix completion}, SIAM Journal on optimization \textbf{20} (2010), no.~4, 1956--1982.

\bibitem[CGLM08]{comon2008symmetric}
Pierre Comon, Gene Golub, Lek-Heng Lim, and Bernard Mourrain, \emph{Symmetric tensors and symmetric tensor rank}, SIAM Journal on Matrix Analysis and Applications \textbf{30} (2008), no.~3, 1254--1279.

\bibitem[CLMW11]{candes2011robust}
Emmanuel~J Cand{\`e}s, Xiaodong Li, Yi~Ma, and John Wright, \emph{Robust principal component analysis?}, Journal of the ACM (JACM) \textbf{58} (2011), no.~3, 1--37.

\bibitem[CLQY20]{comon2020topology}
Pierre Comon, Lek-Heng Lim, Yang Qi, and Ke~Ye, \emph{Topology of tensor ranks}, Advances in Mathematics \textbf{367} (2020), 107128.

\bibitem[CMWX20]{cai2020provable}
Jian-Feng Cai, Lizhang Miao, Yang Wang, and Yin Xian, \emph{Provable near-optimal low-multilinear-rank tensor recovery}, arXiv preprint arXiv:2007.08904 (2020).

\bibitem[CP10]{candes2010matrix}
Emmanuel~J Candes and Yaniv Plan, \emph{Matrix completion with noise}, Proceedings of the IEEE \textbf{98} (2010), no.~6, 925--936.

\bibitem[CR12]{candes2012exact}
Emmanuel Candes and Benjamin Recht, \emph{Exact matrix completion via convex optimization}, Communications of the ACM \textbf{55} (2012), no.~6, 111--119.

\bibitem[CRT06]{candes2006robust}
Emmanuel~J Cand{\`e}s, Justin Romberg, and Terence Tao, \emph{Robust uncertainty principles: Exact signal reconstruction from highly incomplete frequency information}, IEEE Transactions on information theory \textbf{52} (2006), no.~2, 489--509.

\bibitem[CT05a]{candes2005decoding}
Emmanuel~J Candes and Terence Tao, \emph{Decoding by linear programming}, IEEE transactions on information theory \textbf{51} (2005), no.~12, 4203--4215.

\bibitem[CT05b]{caron2005zero}
Richard Caron and Tim Traynor, \emph{The zero set of a polynomial}, WSMR Report (2005), 05--02.

\bibitem[CT06]{thomas2006elements}
Thomas~M Cover and Joy~A Thomas, \emph{Elements of information theory}, Wiley-Interscience, 2006.

\bibitem[CW01]{carbery2001distributional}
Anthony Carbery and James Wright, \emph{Distributional and ${L}^q$ norm inequalities for polynomials over convex bodies in $\mathbb{R}^n$}, Mathematical research letters \textbf{8} (2001), no.~3, 233--248.

\bibitem[DKS17]{diakonikolas2017learning}
Ilias Diakonikolas, Daniel~M Kane, and Alistair Stewart, \emph{Learning multivariate log-concave distributions}, Conference on Learning Theory, PMLR, 2017, pp.~711--727.

\bibitem[DL18]{du2018power}
Simon Du and Jason Lee, \emph{On the power of over-parametrization in neural networks with quadratic activation}, International conference on machine learning, PMLR, 2018, pp.~1329--1338.

\bibitem[dLN{\etalchar{+}}21]{d2021consistent2}
Tommaso d'Orsi, Chih-Hung Liu, Rajai Nasser, Gleb Novikov, David Steurer, and Stefan Tiegel, \emph{Consistent estimation for pca and sparse regression with oblivious outliers}, Advances in Neural Information Processing Systems \textbf{34} (2021), 25427--25438.

\bibitem[dNS21]{d2021consistent1}
Tommaso d’Orsi, Gleb Novikov, and David Steurer, \emph{Consistent regression when oblivious outliers overwhelm}, International Conference on Machine Learning, PMLR, 2021, pp.~2297--2306.

\bibitem[EGKZ20]{emschwiller2020neural}
Matt Emschwiller, David Gamarnik, Eren~C K{\i}z{\i}lda{\u{g}}, and Ilias Zadik, \emph{Neural networks and polynomial regression. demystifying the overparametrization phenomena}, arXiv preprint arXiv:2003.10523 (2020).

\bibitem[ENP12]{eldar2012uniqueness}
Yonina~C Eldar, Deanna Needell, and Yaniv Plan, \emph{Uniqueness conditions for low-rank matrix recovery}, Applied and Computational Harmonic Analysis \textbf{33} (2012), no.~2, 309--314.

\bibitem[GAS{\etalchar{+}}19]{goldt2019dynamics}
Sebastian Goldt, Madhu Advani, Andrew~M Saxe, Florent Krzakala, and Lenka Zdeborov{\'a}, \emph{Dynamics of stochastic gradient descent for two-layer neural networks in the teacher-student setup}, Advances in neural information processing systems \textbf{32} (2019).

\bibitem[Gil52]{gilbert1952comparison}
Edgar~N Gilbert, \emph{A comparison of signalling alphabets}, The Bell system technical journal \textbf{31} (1952), no.~3, 504--522.

\bibitem[GKZ24]{gamarnik2024stationary}
David Gamarnik, Eren~C K{\i}z{\i}lda{\u{g}}, and Ilias Zadik, \emph{Stationary points of a shallow neural network with quadratic activations and the global optimality of the gradient descent algorithm}, Mathematics of Operations Research (2024).

\bibitem[GLM{\etalchar{+}}22]{grotheer2022iterative}
Rachel Grotheer, Shuang Li, Anna Ma, Deanna Needell, and Jing Qin, \emph{Iterative hard thresholding for low cp-rank tensor models}, Linear and Multilinear Algebra \textbf{70} (2022), no.~22, 7452--7468.

\bibitem[GPY18]{ghadermarzy2018learning}
Navid Ghadermarzy, Yaniv Plan, and Ozgur Yilmaz, \emph{Learning tensors from partial binary measurements}, IEEE Transactions on Signal Processing \textbf{67} (2018), no.~1, 29--40.

\bibitem[GRS20]{golowich2020size}
Noah Golowich, Alexander Rakhlin, and Ohad Shamir, \emph{Size-independent sample complexity of neural networks}, Information and Inference: A Journal of the IMA \textbf{9} (2020), no.~2, 473--504.

\bibitem[H{\aa}s89]{haastad1989tensor}
Johan H{\aa}stad, \emph{Tensor rank is np-complete}, Automata, Languages and Programming: 16th International Colloquium Stresa, Italy, July 11--15, 1989 Proceedings 16, Springer, 1989, pp.~451--460.

\bibitem[H{\aa}s90]{haastad1990tensor}
\bysame, \emph{Tensor rank is np-complete}, Journal of algorithms \textbf{11} (1990), no.~4, 644--654.

\bibitem[HJ12]{horn2012matrix}
Roger~A Horn and Charles~R Johnson, \emph{Matrix analysis}, Cambridge University Press, 2012.

\bibitem[HL13]{hillar2013most}
Christopher~J Hillar and Lek-Heng Lim, \emph{Most tensor problems are np-hard}, Journal of the ACM (JACM) \textbf{60} (2013), no.~6, 1--39.

\bibitem[HLC{\etalchar{+}}16]{hung2016detection}
Hung Hung, Yu-Ting Lin, Penweng Chen, Chen-Chien Wang, Su-Yun Huang, and Jung-Ying Tzeng, \emph{Detection of gene--gene interactions using multistage sparse and low-rank regression}, Biometrics \textbf{72} (2016), no.~1, 85--94.

\bibitem[HZC20]{hao2020sparse}
Botao Hao, Anru~R Zhang, and Guang Cheng, \emph{Sparse and low-rank tensor estimation via cubic sketchings}, International conference on artificial intelligence and statistics, PMLR, 2020, pp.~1319--1330.

\bibitem[Jal19]{jalali2019toward}
Shirin Jalali, \emph{Toward theoretically founded learning-based compressed sensing}, IEEE Transactions on Information Theory \textbf{66} (2019), no.~1, 387--400.

\bibitem[JP17]{jalali2017universal}
Shirin Jalali and H~Vincent Poor, \emph{Universal compressed sensing for almost lossless recovery}, IEEE Transactions on Information Theory \textbf{63} (2017), no.~5, 2933--2953.

\bibitem[KB09]{kolda2009tensor}
Tamara~G Kolda and Brett~W Bader, \emph{Tensor decompositions and applications}, SIAM review \textbf{51} (2009), no.~3, 455--500.

\bibitem[K{\i}z22]{kizildag2022algorithms}
Eren~C K{\i}z{\i}lda\u{g}, \emph{Algorithms and algorithmic barriers in high-dimensional statistics and random combinatorial structures}, Ph.D. thesis, Massachusetts Institute of Technology, 2022.

\bibitem[KM05]{klartag2005geometry}
Boaz Klartag and VD~Milman, \emph{Geometry of log-concave functions and measures}, Geometriae Dedicata \textbf{112} (2005), 169--182.

\bibitem[KMO10]{keshavan2010matrix}
Raghunandan~H Keshavan, Andrea Montanari, and Sewoong Oh, \emph{Matrix completion from a few entries}, IEEE transactions on information theory \textbf{56} (2010), no.~6, 2980--2998.

\bibitem[KWB19]{kunisky2019notes}
Dmitriy Kunisky, Alexander~S Wein, and Afonso~S Bandeira, \emph{Notes on computational hardness of hypothesis testing: Predictions using the low-degree likelihood ratio}, ISAAC Congress (International Society for Analysis, its Applications and Computation), Springer, 2019, pp.~1--50.

\bibitem[Lal]{LalleyNotes}
Steven Lalley, \emph{{Orthogonal Polynomials}}, \url{https://galton.uchicago.edu/~lalley/Courses/386/OrthogonalPolynomials.pdf}.

\bibitem[LSSS14]{livni2014computational}
Roi Livni, Shai Shalev-Shwartz, and Ohad Shamir, \emph{On the computational efficiency of training neural networks}, Advances in neural information processing systems \textbf{27} (2014).

\bibitem[LV07]{lovasz2007geometry}
L{\'a}szl{\'o} Lov{\'a}sz and Santosh Vempala, \emph{The geometry of logconcave functions and sampling algorithms}, Random Structures \& Algorithms \textbf{30} (2007), no.~3, 307--358.

\bibitem[LZ23]{luo2023low}
Yuetian Luo and Anru~R Zhang, \emph{Low-rank tensor estimation via riemannian gauss-newton: Statistical optimality and second-order convergence}, The Journal of Machine Learning Research \textbf{24} (2023), no.~1, 18274--18321.

\bibitem[MBB24]{martin2024impact}
Simon Martin, Francis Bach, and Giulio Biroli, \emph{On the impact of overparameterization on the training of a shallow neural network in high dimensions}, International Conference on Artificial Intelligence and Statistics, PMLR, 2024, pp.~3655--3663.

\bibitem[MHWG14]{mu2014square}
Cun Mu, Bo~Huang, John Wright, and Donald Goldfarb, \emph{Square deal: Lower bounds and improved relaxations for tensor recovery}, International conference on machine learning, PMLR, 2014, pp.~73--81.

\bibitem[MSS06]{mesgarani2006content}
Nima Mesgarani, Malcolm Slaney, and SA~Shamma, \emph{Content-based audio classification based on multiscale spectro-temporal features}, IEEE Transactions on Speech and Audio processing \textbf{14} (2006), no.~3, 920--930.

\bibitem[NS10]{nion2010tensor}
Dimitri Nion and Nicholas~D Sidiropoulos, \emph{Tensor algebra and multidimensional harmonic retrieval in signal processing for mimo radar}, IEEE Transactions on Signal Processing \textbf{58} (2010), no.~11, 5693--5705.

\bibitem[NTS15]{neyshabur2015norm}
Behnam Neyshabur, Ryota Tomioka, and Nathan Srebro, \emph{Norm-based capacity control in neural networks}, Conference on learning theory, PMLR, 2015, pp.~1376--1401.

\bibitem[PJL24]{pensia2024robust}
Ankit Pensia, Varun Jog, and Po-Ling Loh, \emph{Robust regression with covariate filtering: Heavy tails and adversarial contamination}, Journal of the American Statistical Association (2024), 1--12.

\bibitem[RSB15]{riegler2015information}
Erwin Riegler, David Stotz, and Helmut B{\"o}lcskei, \emph{Information-theoretic limits of matrix completion}, 2015 IEEE International Symposium on Information Theory (ISIT), IEEE, 2015, pp.~1836--1840.

\bibitem[RSS17]{rauhut2017low}
Holger Rauhut, Reinhold Schneider, and {\v{Z}}eljka Stojanac, \emph{Low rank tensor recovery via iterative hard thresholding}, Linear Algebra and its Applications \textbf{523} (2017), 220--262.

\bibitem[Sam18]{samworth}
Richard~J. Samworth, \emph{{Recent Progress in Log-Concave Density Estimation}}, Statistical Science \textbf{33} (2018), no.~4, 493 -- 509.

\bibitem[SBRJ19]{suggala2019adaptive}
Arun~Sai Suggala, Kush Bhatia, Pradeep Ravikumar, and Prateek Jain, \emph{Adaptive hard thresholding for near-optimal consistent robust regression}, Conference on Learning Theory, PMLR, 2019, pp.~2892--2897.

\bibitem[SJL18]{soltanolkotabi2018theoretical}
Mahdi Soltanolkotabi, Adel Javanmard, and Jason~D Lee, \emph{Theoretical insights into the optimization landscape of over-parameterized shallow neural networks}, IEEE Transactions on Information Theory \textbf{65} (2018), no.~2, 742--769.

\bibitem[SK12]{sidiropoulos2012multi}
Nicholas~D Sidiropoulos and Anastasios Kyrillidis, \emph{Multi-way compressed sensing for sparse low-rank tensors}, IEEE Signal Processing Letters \textbf{19} (2012), no.~11, 757--760.

\bibitem[SMVEZ20]{sarao2020optimization}
Stefano Sarao~Mannelli, Eric Vanden-Eijnden, and Lenka Zdeborov{\'a}, \emph{Optimization and generalization of shallow neural networks with quadratic activation functions}, Advances in Neural Information Processing Systems \textbf{33} (2020), 13445--13455.

\bibitem[SRT15]{shah2015optimal}
Parikshit Shah, Nikhil Rao, and Gongguo Tang, \emph{Optimal low-rank tensor recovery from separable measurements: Four contractions suffice}, arXiv preprint arXiv:1505.04085 (2015).

\bibitem[Sta89]{stanley1989log}
Richard~P Stanley, \emph{Log-concave and unimodal sequences in algebra, combinatorics, and geometry}, Ann. New York Acad. Sci \textbf{576} (1989), no.~1, 500--535.

\bibitem[Sze39]{szego1939orthogonal}
Gabor Szeg{\"o}, \emph{Orthogonal polynomials}, vol.~23, American Mathematical Soc., 1939.

\bibitem[TBD11]{tan2011rank}
Vincent~YF Tan, Laura Balzano, and Stark~C Draper, \emph{Rank minimization over finite fields: Fundamental limits and coding-theoretic interpretations}, IEEE transactions on information theory \textbf{58} (2011), no.~4, 2018--2039.

\bibitem[Tho14]{thomas2014polarization}
Erik~GF Thomas, \emph{A polarization identity for multilinear maps}, Indagationes Mathematicae \textbf{25} (2014), no.~3, 468--474.

\bibitem[Var57]{varshamov1957estimate}
Rom~Rubenovich Varshamov, \emph{Estimate of the number of signals in error correcting codes}, Docklady Akad. Nauk, SSSR \textbf{117} (1957), 739--741.

\bibitem[VBB19]{venturi2019spurious}
Luca Venturi, Afonso~S Bandeira, and Joan Bruna, \emph{Spurious valleys in one-hidden-layer neural network optimization landscapes}, Journal of Machine Learning Research \textbf{20} (2019), no.~133, 1--34.

\bibitem[Ver10]{vershynin2010introduction}
Roman Vershynin, \emph{Introduction to the non-asymptotic analysis of random matrices}, arXiv preprint arXiv:1011.3027 (2010).

\bibitem[Ver18]{vershynin2018high}
\bysame, \emph{High-dimensional probability: An introduction with applications in data science}, vol.~47, Cambridge university press, 2018.

\bibitem[VSS22]{vardi2022sample}
Gal Vardi, Ohad Shamir, and Nati Srebro, \emph{The sample complexity of one-hidden-layer neural networks}, Advances in Neural Information Processing Systems \textbf{35} (2022), 9139--9150.

\bibitem[Wai09]{wainwright2009information}
Martin~J Wainwright, \emph{Information-theoretic limits on sparsity recovery in the high-dimensional and noisy setting}, IEEE transactions on information theory \textbf{55} (2009), no.~12, 5728--5741.

\bibitem[Wal09]{walther2009inference}
Guenther Walther, \emph{{Inference and modeling with log-concave distributions}}, Statistical Science (2009), 319--327.

\bibitem[WV10]{wu2010renyi}
Yihong Wu and Sergio Verd{\'u}, \emph{R{\'e}nyi information dimension: Fundamental limits of almost lossless analog compression}, IEEE Transactions on Information Theory \textbf{56} (2010), no.~8, 3721--3748.

\bibitem[WWR10]{wang2010information}
Wei Wang, Martin~J Wainwright, and Kannan Ramchandran, \emph{Information-theoretic limits on sparse signal recovery: Dense versus sparse measurement matrices}, IEEE Transactions on Information Theory \textbf{56} (2010), no.~6, 2967--2979.

\bibitem[Xu18]{xu2018minimal}
Zhiqiang Xu, \emph{The minimal measurement number for low-rank matrix recovery}, Applied and Computational Harmonic Analysis \textbf{44} (2018), no.~2, 497--508.

\bibitem[YZ17]{yuan2017incoherent}
Ming Yuan and Cun-Hui Zhang, \emph{Incoherent tensor norms and their applications in higher order tensor completion}, IEEE Transactions on Information Theory \textbf{63} (2017), no.~10, 6753--6766.

\bibitem[ZAZD19]{zhang2019tensor}
Zhengwu Zhang, Genevera~I Allen, Hongtu Zhu, and David Dunson, \emph{Tensor network factorizations: Relationships between brain structural connectomes and traits}, Neuroimage \textbf{197} (2019), 330--343.

\bibitem[ZDJW13]{zhang2013information}
Yuchen Zhang, John Duchi, Michael~I Jordan, and Martin~J Wainwright, \emph{Information-theoretic lower bounds for distributed statistical estimation with communication constraints}, Advances in Neural Information Processing Systems \textbf{26} (2013).

\bibitem[ZK23]{zhang2023covering}
Yifan Zhang and Joe Kileel, \emph{Covering number of real algebraic varieties and beyond: Improved bounds and applications}, arXiv e-prints (2023), arXiv--2311.

\bibitem[ZLZ13]{zhou2013tensor}
Hua Zhou, Lexin Li, and Hongtu Zhu, \emph{Tensor regression with applications in neuroimaging data analysis}, Journal of the American Statistical Association \textbf{108} (2013), no.~502, 540--552.

\end{thebibliography}
\end{document}